\numberwithin{equation}{section}
\definecolor{cornellred}{rgb}{0.7,0.11,0.11}
\theoremstyle{plain}
\newtheorem{theorem}{Theorem}[section]
\newtheorem{lemma}[theorem]{Lemma}
\newtheorem{proposition}[theorem]{Proposition}
\newtheorem{corollary}[theorem]{Corollary}
\theoremstyle{definition}
\newtheorem{definition}[theorem]{Definition}
\newtheorem{remark}[theorem]{Remark}
\newtheorem{notation}[theorem]{Notation}
\DeclareMathAlphabet{\mathcal}{OMS}{cmsy}{m}{n}
\newcommand{\Cs}{\ensuremath{\mathrm{C}^\ast}}
\DeclareMathOperator{\Cu}{\mathbf{Cu}}
\DeclareMathOperator{\Cus}{\mathrm{Cu}}
\DeclareMathOperator{\1}{\mathbf{1}}
\DeclareMathOperator{\id}{\mathrm{id}}
\DeclareMathOperator{\Aut}{\mathrm{Aut}}
\DeclareMathOperator{\ev}{\mathrm{ev}}
\DeclareMathOperator{\Ad}{\mathrm{Ad}}
\DeclareMathOperator{\N}{\mathbb{N}}
\DeclareMathOperator{\Z}{\mathbb{Z}}
\DeclareMathOperator{\R}{\mathbb{R}}
\DeclareMathOperator{\T}{\mathbb{T}}
\DeclareMathOperator{\M}{\mathcal{M}}
\DeclareMathOperator{\U}{\mathcal{U}}
\DeclareMathOperator{\K}{\mathcal{K}}
\DeclareMathOperator{\I}{\mathcal{I}}
\DeclareMathOperator{\C}{\mathcal{C}}
\DeclareMathOperator{\B}{\mathcal{B}}
\DeclareMathOperator{\Hil}{\mathcal{H}}
\DeclareMathOperator{\O2}{\mathcal{O}_2}
\DeclareMathOperator{\Oinf}{\mathcal{O}_{\infty}}
\newcommand{\e}[0]{\varepsilon}
\DeclareMathOperator{\f}{\varphi}
\DeclareMathOperator{\p}{\mathfrak{p}}
\DeclareMathOperator{\q}{\mathfrak{q}}
\DeclareMathOperator{\bbu}{\mathbbm{u}}
\DeclareMathOperator{\bbv}{\mathbbm{v}}
\DeclareMathOperator{\cc}{\simeq_{\mathrm{cc}}}
\DeclareMathOperator{\Prim}{\mathrm{Prim}}
\setlist[enumerate,itemize]{noitemsep,nolistsep}
\begin{document}

\title[Classification of equivariantly {$\O2$-stable} amenable actions]{{Classification of equivariantly $\O2$-stable amenable actions on nuclear \Cs-algebras}}

\author{Matteo Pagliero}
\address{Department of Mathematics, KU Leuven, Celestijnenlaan 200b, box 2400, B-3001 Leuven, Belgium}
\curraddr{}
\email{matteo.pagliero@kuleuven.be}
\thanks{}

\author{Gábor Szabó}
\curraddr{}
\email{gabor.szabo@kuleuven.be}
\thanks{}

\subjclass[2020]{46L55, 46L35}

\keywords{}

\date{}

\dedicatory{}

\begin{abstract}
Given a second-countable, locally compact group $G$, we consider amenable $G$-actions on separable, stable, nuclear \Cs-algebras that are isometrically shift-absorbing and tensorially absorb the trivial action on the Cuntz algebra $\O2$.\
We show that such actions are classified up to cocycle conjugacy by the induced $G$-action on the primitive ideal space.\
In the special case when $G$ is exact, we prove a unital version of our classification theorem.\
For compact groups, we obtain a classification up to conjugacy.
\end{abstract}

\maketitle

\tableofcontents

\section*{Introduction}
\renewcommand\thetheorem{\Alph{theorem}}

Group actions have played a central role in the field of operator algebras since their inception as an aid to describe natural physical phenomena occurring in quantum mechanics.\
If quantum systems are described by operator algebras, time evolution and symmetries are expressed in terms of group actions on them.\
These turned out to be fundamental in the purely mathematical context for understanding structural properties and the classification of operator algebras.\
A prime example is Connes's celebrated classification of injective factors \cite{Con76} (complemented by Haagerup \cite{Haa87}), which relies on the classification of cyclic group actions.\
Consequently, Connes's result opened up a line of research that, through many subsequent works, culminated in the classification of actions of amenable groups on injective factors up to cocycle conjugacy \cite{Con75,Con77,Jon83,Ocn85,ST89,KST92,KST98,Mas13}.\
This represented a sufficiently significant milestone to spark a lot of interest in the classification of group actions on other classes of operator algebras, most prominently \Cs-algebras.\
The historical overview of progress with respect to the latter can be found in Izumi's survey \cite{Izu11}, which describes the state of the art at the time of its publication in 2011.\
For an overview of more recent works, the reader can consult the introduction of \cite{Sza21} and the references therein.

Although we do not give a complete historical account of the classification of \Cs-dynamical systems, we would like to mention the historical importance of the techniques developed by Evans and Kishimoto for classifying single automorphisms \cite{EK97}, which was followed by work of Nakamura \cite{Nak00} on single automorphisms of simple purely infinite \Cs-algebras.\
 Partially in parallel to the study of single automorphisms, the Rokhlin property for finite group actions was introduced by Izumi in \cite{Izu04a,Izu04b}, where he also gave a general classification of these up to conjugacy that is easily applicable to concrete situations.\
 More recently, Izumi--Matui \cite{IM21a,IM21b} established a groundbreaking classification of poly-$\Z$ group actions on Kirchberg algebras, generalising their previous result for $\Z^2$-actions \cite{IM10}.\
 A more general classification theorem for (second-countable) locally compact group actions on Kirchberg algebras was proved yet more recently by Gabe and the second named author \cite{GS22b}.\
 This can be considered as the dynamical version of the Kirchberg--Phillips theorem \cite{Phi00,Kir}, in which the $KK$-theoretical invariant is replaced by equivariant Kasparov theory $KK^G$ \cite{Kas88}.\
 The successful classification of group actions on separable, nuclear, purely infinite \Cs-algebras that are simple makes this the right time to turn the attention to group actions on not necessarily simple, separable, nuclear, purely infinite \Cs-algebras.\
In the present work we develop the first classification theory for actions of arbitrary second-countable, locally compact groups on non-simple \Cs-algebras.

There are several important examples of strongly self-absorbing \Cs-algebras \cite{TW07} that play a key role in the classification of nuclear \Cs-algebras.
These objects are equally important in the dynamical setting, where strongly self-absorbing actions \cite{Sza18a,Sza18,Sza17} generalise them in an appropriate sense.\
One of the notable examples of strongly self-absorbing \Cs-algebras is the Cuntz algebra $\O2$ \cite{Cun77}.
Let us give a brief overview of the prominence of $\O2$ in the theory of \Cs-dynamical systems.\
First of all, note that despite its classification invariant being trivial, $\O2$ admits group actions in abundance, for instance in the sense that the relations of cocycle conjugacy of automorphisms of $\O2$, and isomorphisms of simple crossed products of the form $\O2\rtimes\Z_2$ are not Borel \cite{GL16}.\
In fact, Izumi \cite{Izu04a} provided a hands-on construction of uncountably many mutually non-cocycle conjugate outer actions of $\Z_2$ on $\O2$.\
To show that these actions are not cocycle conjugate, Izumi computes the $K$-theory groups of their crossed product explicitly, showing that they are all different and therefore the actions in question cannot be cocycle conjugate.\
In point of fact, it is known that the behavior of actions of groups with torsion cannot generally be predicted by considering just the classification invariant of the underlying \Cs-algebra, as evidenced by the aforementioned $K$-theoretical obstructions for actions on $\O2$.\
In contrast, for many torsion-free groups $G$, there is a unique outer $G$-action on $\O2$ up to cocycle conjugacy.\
This is true for $G=\Z^k$ by work of Matui \cite{Mat07}, although it should be noted that the case $k=1$ follows from work of Nakamura \cite{Nak00}.\
More generally, it has been shown in \cite{GS22b} that there exists a unique amenable outer $G$-action on every strongly self-absorbing Kirchberg algebra (such as $\O2$) when $G$ is assumed to be countable, discrete, exact, torsion-free with the Haagerup property.

In the following paragraph, we recall some of the seminal results involving $\O2$ and explain their dynamical generalisations with special emphasis on the relevance of equivariantly $\O2$-stable actions.\
Recall that the $\O2$-absorption theorem \cite{Ror94,KP00} states that a \Cs-algebra $A$ is separable, nuclear, unital and simple precisely when $A\otimes\O2\cong\O2$.\
Its dynamical counterpart for actions of amenable groups was established by the second author in \cite{Sza18b}.\
This states that if $G$ is a countable, discrete, amenable group, and $\delta:G\curvearrowright \O2$ an outer action that is cocycle conjugate to $\delta\otimes\id_{\O2}$, then any action $\alpha:G\curvearrowright A$ on a separable, nuclear, simple, unital \Cs-algebra is tensorially absorbed --- up to cocycle conjugacy --- by $\delta$.
(We note that this result had been known for finite groups earlier \cite{Izu04a}.)

\begin{definition}
A group action $\alpha:G\curvearrowright A$ on a \Cs-algebra is \textit{equivariantly $\O2$-stable} if it is cocycle conjugate to $\alpha\otimes\id_{\O2}:G\curvearrowright A\otimes\O2$.
\end{definition}

As a direct consquence of what was said before, for any countable, discrete, amenable group $G$, $\O2$ admits precisely one outer and equivariantly $\O2$-stable $G$-action.
This serves as evidence that equivariant $\O2$-stability for actions is the right dynamical counterpart of $\O2$-stability.\
One may notice that assuming equivariant $\O2$-stability rules out the type of examples like Izumi's uncountably many non-cocycle conjugacy $\Z_2$-actions on $\O2$ mentioned above.\
The dynamical $\O2$-absorption theorem was then extended by Suzuki \cite{Suz21} to amenable actions of countable, exact groups.\
It should be noted that Suzuki's result is stated for actions with the quasicentral approximation property (abbreviated QAP).\
Roughly, an action $\alpha:G\curvearrowright A$ has the QAP if $L^2(G,A)$, viewed as an equivariant $A$-bimodule, admits a net of approximately fixed elements with certain desirable properties; see Definition \ref{def:amenability} for the precise statement.
Since the QAP and amenability have been shown to be equivalent for locally compact group actions as a result of work of Buss--Echterhoff--Willet \cite{BEW20,BEW20a}, Suzuki \cite{Suz19} and Ozawa--Suzuki \cite{OS21}, one can use these two properties interchangeably.\
This turns out to be a useful new input for classifying amenable actions of locally compact groups, instead of restricting the attention to the more special case of actions of amenable groups.\
It is well-known that, in order to classify a sufficiently broad class of actions, one must impose some non-triviality conditions on the actions at hand, which is usually expressed as an outnerness-type property.\
The right outerness condition in the context of our article is imported from \cite{GS22b}, and is therein called isometric shift-absorption.\
Simply put, an action of $G$ on $A$ is isometrically shift-absorbing when $A$ can be locally approximated by $L^2(G,A)$ in a suitable sense (see Definition \ref{def:isa} and Remark \ref{rem:isa}), which forces at least $A\cong A\otimes\mathcal{O}_\infty$ for the underlying \Cs-algebra.\
In \cite{GS22b}, it is shown that an action of a countable, discrete group on a Kirchberg algebra is isometrically shift-absorbing precisely when it is outer.\
Although outerness is a strictly weaker assumption in greater generality, this particular case suggests that isometric shift-absorption is a reasonable condition to ask.\
In the same article it is observed that isometric shift-absorption is equivalent to the Rokhlin property for $\mathbb{R}^k$-actions on $\Oinf$-stable \Cs-algebras.\
However, the Rokhlin property does not coincide with (because it is strictly stronger than) isometric shift-absorption for compact group actions as one might be tempted to guess from this.
Since isometric shift-absorption and amenability can both be expressed as properties of the equivariant $A$-bimodule $L^2(G,A)$, they blend well together.\
In fact, when an action is both amenable and isometrically shift-absorbing, one gains access to a useful averaging argument developed in \cite{GS22b}, which was a key piece of methodology to proving the dynamical Kirchberg--Phillips theorem  and also plays a similar role in the present work.
Let us briefly recall the appropriate notion of morphism between group actions that is suitable to develop a classification up to cocycle conjugacy.

\begin{definition}
Let $\alpha:G\curvearrowright A$ and $\beta:G\curvearrowright B$ be group actions on \Cs-algebras.\
A \emph{proper cocycle morphism} is a pair $(\f,\bbu):(A,\alpha)\to(B,\beta)$, where $\f:A\to B$ is a $\ast$-homomorphism and $\bbu:G\to\U(\1+B)$ is a norm-continuous $\beta$-cocycle satisfying $\Ad(\bbu_g) \circ \beta_g \circ \f = \f \circ \alpha_g$ for all $g\in G$.
\end{definition}

Historically, the $\O2$-embedding theorem \cite{KP00} was proved as an intermediate step to the Kirchberg--Phillips theorem.\
Likewise, as an intermediate step to the dynamical Kirchberg--Phillips theorem \cite{GS22b}, the appropriate dynamical counterpart of the $\O2$-embedding theorem was established in \cite[Theorem G]{GS22b}.\
This states that an amenable action on a separable, exact \Cs-algebra admits a proper cocycle embedding into any equivariantly $\O2$-stable, isometrically shift-absorbing action on a Kirchberg algbera.

Beyond the realm of simple \Cs-algebras, one has to mention the classification of all separable, nuclear, strongly purely infinite \cite{KR02} \Cs-algebras via ideal-related $KK$-theory, announced by Kirchberg \cite{Kir00} and later proved by Gabe \cite{Gab21} (see also the latest version of Kirchberg's unpublished book \cite{Kir}).\
As an intermediate result, Kirchberg outlined an $\O2$-stable classification, stating that separable, nuclear, stable \Cs-algebras that tensorially absorb $\O2$ are classified by their primitive ideal space alone.\
Within the class of separable, nuclear, strongly purely infinite \Cs-algebras, the $\O2$-stable \Cs-algebras are exactly those whose closed, two-sided ideals are $KK$-contractible (meaning $KK$-equivalent to the zero \Cs-algebra) by \cite{Gab22} (and the same is true for hereditary subalgebras, quotients, inductive limits and extensions of such algebras \cite{Kir04, TW07}).\
This is a large class of operator algebras that deserves special attention.\
Motivated by the well established result of Gabe and Kirchberg for $\O2$-stable \Cs-algebras, in the present work we establish a dynamical version thereof.\
The first full proof of Kirchberg's $\O2$-stable classification theorem was published by Gabe \cite{Gab20}, and it features a slightly different methodology from Kirchberg's original draft.\
In his approach, Gabe observes that the lattice of ideals $\I(A)$ of a separable \Cs-algebra $A$ can be endowed with a structure of abstract Cuntz semigroup in the sense of \cite{CEI08}.\
That is, $\I(-)$ can be viewed as a covariant functor that sends a $\ast$-homomorphism $\f:A\to B$ between \Cs-algebras to a morphism of abstract Cuntz semigroups $\I(\f):\I(A)\to\I(B)$.\
As it turns out, for purely infinite \Cs-algebras $A$, $\I(A)$ is canonically isomorphic to the Cuntz semigroup $\Cus(A)$.\
The Cuntz semigroup of a \Cs-algebra is an invariant that can be traced back to the seminal work of Cuntz \cite{Cun78}.
It was later used by Toms \cite{Tom08} to construct a large family of counterexamples to the original Elliott conjecture for simple, separable, nuclear \Cs-algebras.\
Further study on the Cuntz semigroup led to the definition and study of the abstract Cuntz semigroup category \cite{CEI08} (see also \cite{APT11,APT18}), to which it belongs.
An alternative classification invariant for a separable, nuclear, stable and $\O2$-stable \Cs-algebra $A$ is the topological space $\Prim(A)$ given by primitive ideals of $A$ equipped with the Jacobson topology.\
Although it lacks a rich categorical structure, $\Prim(A)$ captures the isomorphisms class of $\I(A)$.\
For this reason, $\Prim(A)$ appears in the main theorem of \cite{Gab20}, while $\I(A)$ is a finer auxiliary invariant that appears in proofs.\
We follow a similar approach to prove a dynamical version of Kirchberg's $\O2$-stable classification.
Let us give a more precise idea of our framework.\
We assume $G$ to be a second-countable, locally compact group acting on a separable \Cs-algebra $A$.\
An element $g\in G$ acts by order isomorphisms on the ideal lattice $\I(A)$ of $A$ by associating $\alpha^{\sharp}_g(I)=\{\alpha_g(x) \mid x\in I\}$ to the ideal $I$.\
It follows that the restriction of $\alpha^{\sharp}$ to $\Prim(A)$ is a continuous $G$-action with respect to the Jacobson topology (see Lemma \ref{lem:contaction}).\
Since an inner automorphism of $A$ induces the identity map on $\Prim(A)$, a cocycle conjugacy between \Cs-dynamical systems induces an equivariant homeomorphism, or conjugacy, between the induced topological dynamical systems of primitive ideals.\footnote{Note that we use the term `topological dynamical system' in a rather broad sense here, as the topological space $\Prim(A)$ can be highly non-Hausdorff.}\
Our main result is a dynamical generalisation of Kirchberg's $\O2$-stable classification that uses the topological dynamical system given by $\alpha^{\sharp}:G\curvearrowright\Prim(A)$ as a classification invariant for $\alpha:G\curvearrowright A$.\
We state here a shortened rendition of our theorem, and the reader is referred to Theorem \ref{thm:classification} for the full version.

\begin{theorem}\label{thm:intro}
Let $G$ be a second-countable, locally compact group, and $A,B$ two separable, nuclear, stable \Cs-algebras.\
Then, two amenable, isometrically shift-absorbing and equivariantly $\O2$-stable actions $\alpha:G\curvearrowright A$ and $\beta:G\curvearrowright B$ are cocycle conjugate if and only if $\alpha^{\sharp}:G\curvearrowright\Prim(A)$ and $\beta^{\sharp}:G\curvearrowright\Prim(B)$ are conjugate.
\end{theorem}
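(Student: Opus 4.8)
The plan is to prove the two directions separately, with the forward implication being routine and the converse carrying all the weight. For the ``only if'' direction, if $(\f,\bbu):(A,\alpha)\to(B,\beta)$ implements a cocycle conjugacy, then $\f$ is an equivariant isomorphism up to the inner perturbation by $\bbu$; since inner automorphisms act trivially on $\Prim(-)$, the induced map $\f^\sharp$ is an equivariant homeomorphism intertwining $\alpha^\sharp$ and $\beta^\sharp$. This is essentially recorded already in the discussion preceding the theorem, so the entire difficulty lies in manufacturing a cocycle conjugacy out of a mere conjugacy of the primitive ideal spaces.

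For the converse I would adopt the classical two-sided approximate intertwining strategy, adapted to the equivariant and cocycle setting as in the dynamical Kirchberg--Phillips theorem. The first step is to upgrade the given conjugacy $\Prim(A)\cong\Prim(B)$ to a $G$-equivariant isomorphism of the ideal lattices $\I(A)\cong\I(B)$, using that $\Prim(-)$ determines $\I(-)$ and that, for the purely infinite algebras at hand, $\I(-)$ coincides with the Cuntz semigroup $\Cus(-)$; the $G$-action $\alpha^\sharp$ is carried along throughout. The intertwining then rests on two complementary ingredients phrased for proper cocycle morphisms: an \emph{existence} statement producing, for any equivariant morphism $\I(A)\to\I(B)$ compatible with the dynamics, a proper cocycle morphism $(A,\alpha)\to(B,\beta)$ inducing it on ideal lattices; and a \emph{uniqueness} statement asserting that any two proper cocycle morphisms inducing the same map on $\I(-)$ are approximately unitarily equivalent in the cocycle sense.

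The existence ingredient should be a non-simple, two-sided refinement of the equivariant $\O2$-embedding theorem of \cite{GS22b}: equivariant $\O2$-stability of the target furnishes enough room to embed, while amenability and isometric shift-absorption supply the Rokhlin-type averaging that converts an abstract map of invariants into an honest $\ast$-homomorphism together with a compatible cocycle. The uniqueness ingredient is where the averaging argument of \cite{GS22b} is combined with the absorption $\beta\cc\beta\otimes\id_{\O2}$ to show that two cocycle morphisms agreeing on $\I(-)$ can be connected through unitaries up to arbitrarily small error, uniformly on compact subsets of $G$. Granting existence and uniqueness, one feeds the equivariant lattice isomorphism and its inverse into an alternating sequence of cocycle morphisms and applies an Elliott-type intertwining for cocycle morphisms to extract a genuine cocycle conjugacy inducing the prescribed conjugacy on $\Prim$.

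The hard part will be the uniqueness theorem in the non-simple equivariant setting. In the simple case one can test two homomorphisms against each other globally, but here one must control them ideal by ideal while keeping the implementing unitaries and the cocycle coherent across the whole lattice $\I(B)$, and simultaneously reconcile the Cuntz-semigroup bookkeeping with the equivariant averaging. Ensuring that the ideal-separated uniqueness estimates patch together into a single cocycle-unitary equivalence respecting the $G$-action is, I expect, the principal technical obstacle, presumably resolved by an equivariant, ideal-related analogue of Kirchberg's $\O2$-stable uniqueness machinery.
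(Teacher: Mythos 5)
Your proposal follows essentially the same route as the paper: upgrade the conjugacy of $(\Prim(A),\alpha^{\sharp})$ and $(\Prim(B),\beta^{\sharp})$ to an equivariant order isomorphism $(\I(A),\alpha^{\sharp})\cong(\I(B),\beta^{\sharp})$, establish existence and uniqueness theorems for nuclear proper cocycle morphisms relative to the induced map on $\I(-)$ (using amenability, isometric shift-absorption and equivariant $\O2$-stability exactly as you describe), and then conclude with the dynamical Elliott intertwining for cocycle morphisms. The only discrepancy is one of emphasis rather than mathematics: the paper identifies the \emph{existence} theorem, not the uniqueness theorem, as the locus of the main technical novelty.
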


Our methodology blends together various techniques from \cite{GS22b} and \cite{Gab20}.\
As one of the extra complications compared to what happens in \cite{GS22b}, we need to control in a suitable sense the ideal structure of the involved \Cs-algebras, which comes with additional technical challenges in light of the dynamical structure that we need to keep track of.\
The dynamical nature of the invariant makes it impossible to directly apply results from \cite{Gab20}, hence we need to reprove a number of intermediate results suited for our context, which can be considered the most challenging component of the present work.

The article is organised as follows.\
The first section presents the needed notation and terminology and recalls various well-known facts.\
This includes basics about sequence algebras, isometric shift-absorption, amenability, and (proper) cocycle morphisms between \Cs-dynamical systems.\
In the second section we recall some properties of the ideal lattice of a \Cs-algebra from \cite{Gab20} and describe the topological invariant that we associate to a $G$-\Cs-algebra.\
In Section \ref{sec:uniqueness}, we prove the uniqueness theorem underpinning our classification theory, which is reported below (see also Theorem \ref{thm:uniqueness}), by adapting some techniques from \cite{GS22b} to the non-simple case.\
Note that we assume $\beta$ to be strongly stable, i.e., conjugate to $\beta\otimes\id_{\K}$, where $\K$ denotes the \Cs-algebra of compact operators on a separable, infinite-dimensional Hilbert space.\
By virtue of an observation in \cite{GS22b}, this assumption can eventually be dropped in the classification theorem.

\begin{theorem}
Let $\alpha:G\curvearrowright A$ be an action on a separable, exact \Cs-algebra, and $\beta:G\curvearrowright B$ a strongly stable, amenable, equivariantly $\O2$-stable, isometrically shift-absorbing action on a separable \Cs-algebra.\
If $(\f,\bbu),(\psi,\bbv):(A,\alpha)\to(B,\beta)$ are two proper cocycle morphisms with $\f$ and $\psi$ nuclear, then $\I(\f)=\I(\psi)$ if and only if $(\f,\bbu)$ is strongly asymptotically unitarily equivalent to $(\psi,\bbv)$, i.e., there exists a norm-continuous unitary path $v:[0,\infty)\to\U(\1+B)$ with $v_0=\1$ and
\begin{equation*}
\psi(a)=\lim_{t\to\infty} v_t\f(a)v_t^*, \quad \lim_{t\to\infty} \max_{g\in K}\|\bbv_g-v_t\bbu_g\beta_g(v_t)^*\|=0
\end{equation*}
for all $a\in A$ and compact sets $K\subseteq G$.
\end{theorem}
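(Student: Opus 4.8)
The plan is to prove the two implications separately, with the forward direction being routine and the reverse direction carrying all the weight. For the forward direction, suppose $(\f,\bbu)\sasu(\psi,\bbv)$ via a norm-continuous path $v\colon[0,\infty)\to\U(\1+B)$. Conjugation by any unitary of $\1+B$ fixes the ideal lattice $\I(B)$ and commutes with the assignment sending an ideal to the ideal it generates in $B$; hence for every $I\in\I(A)$ the ideals generated by $\f(I)$ and $\psi(I)=\lim_t v_t\f(I)v_t^*$ coincide, giving $\I(\psi)=\I(\f)$. For the reverse direction I would argue by reindexing in the $G$-sequence algebra of $B$, reducing the claim to the following approximate statement: for every finite $F\subseteq A$, compact $K\subseteq G$, and $\e>0$, there is a unitary $w\in\U(\1+B)$ with $\|w\f(a)w^*-\psi(a)\|<\e$ for $a\in F$ and $\max_{g\in K}\|w\bbu_g\beta_g(w)^*-\bbv_g\|<\e$. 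A two-sided Elliott intertwining then assembles a sequence of such unitaries, with consecutive correcting unitaries taken close to $\1$ and therefore joined by short unitary paths, into the desired norm-continuous path with $v_0=\1$; the absence of a $K_1$-obstruction to this assembly is guaranteed by the $\O2$-stability of $B$.

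The heart of the argument is producing the unitaries $w$. Here I would first strip away the group by means of the averaging technique of \cite{GS22b}, which is available precisely because $\beta$ is amenable and isometrically shift-absorbing: approximately invariant vectors in $L^2(G,B)$ allow one to average a merely non-equivariant implementing unitary into one that simultaneously conjugates $\f$ to $\psi$ \emph{and} transports the $\beta$-cocycle $\bbu$ to $\bbv$, uniformly over compact subsets of $G$. This reduces the required approximate equivariant-with-cocycle equivalence to the purely non-equivariant assertion that $\f\au\psi$ as $\ast$-homomorphisms into $B$.

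The non-equivariant core is then the $\O2$-stable uniqueness theorem underlying \cite{Gab20}: two nuclear $\ast$-homomorphisms into an $\O2$-stable codomain that induce the same map on ideal lattices are approximately unitarily equivalent, the invariant being exactly the $\I$-map (equivalently the Cuntz semigroup map, since the relevant algebras are purely infinite). Nuclearity of $\f$ and $\psi$ is used to make the underlying existence and absorption statements applicable, and the hypothesis $\I(\f)=\I(\psi)$ is what feeds the uniqueness. Equivariant $\O2$-stability enters both here and in the first paragraph, supplying the approximately central equivariant copies of $\O2$ that provide the ``room'' for the absorption and for the short homotopies.

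The main obstacle is the last step in the present non-simple, equivariant setting. In the simple case treated in \cite{GS22b} the ideal lattice is trivial, so one cannot simply quote those uniqueness results; instead the absorption and existence machinery of \cite{Gab20} must be reproved while tracking, ideal by ideal, into which ideal of $B$ each element is carried, and the auxiliary absorbing morphisms must be chosen so that this bookkeeping survives both the passage to the sequence algebra and the equivariant averaging step without collapsing to the simple situation. Verifying that the full ideal map $\I(-)$ — rather than a single $KK$- or $K$-theoretic class — is a complete invariant for approximate unitary equivalence in a manner compatible with the averaging argument is exactly the ``control of the ideal structure'' flagged in the introduction, and is the most delicate component of the proof.
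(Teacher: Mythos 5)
Your high-level architecture --- a non-equivariant ideal-related uniqueness input from \cite{Gab20}, an averaging step powered by amenability and isometric shift-absorption, and an $\O2$-fuelled assembly into a continuous path --- is the same as the paper's, and your forward direction is fine. However, the mechanism you describe for the middle step would fail as stated. You propose to ``average a merely non-equivariant implementing unitary'' into one that also transports the cocycle; but averaging over $G$ (against approximately invariant vectors in $L^2(G,B)$) destroys unitarity, and there is no general way to correct the averaged element back to a unitary. This is precisely why the paper never works with approximate unitary equivalence at this stage, but with \emph{approximate 1-domination} (Definition \ref{def:wc}, imported from \cite{GS22a}): a one-sided relation $b^*\psi(a)b\approx c^*\f(a)c$, $b^*\bbv_g\beta_g(b)\approx c^*\bbu_g\beta_g(c)$ whose witnesses $c$ are arbitrary elements entering quadratically, so that the positive-type structure survives the $L^2(G,B)$-averaging of \cite[Lemma 3.16]{GS22b} (Lemma \ref{lem:3.15}). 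Correspondingly, the non-equivariant input is not an approximate unitary equivalence theorem but \cite[Theorem 3.3]{Gab20}, which from $\I(\f)\geq\I(\psi)$ yields sums $\sum_i c_i^*\f(a)c_i\approx\psi(a)$, compressed to a single witness using $\Oinf$-stability (Theorem \ref{thm:weak-cont}, then Lemma \ref{lem:inductivelim} to descend from the sequence algebra).

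The assembly step has a second gap. An Elliott-type intertwining between two \emph{fixed} morphisms does not upgrade approximate (cocycle) unitary equivalence to \emph{strong asymptotic} unitary equivalence: there is no reason the consecutive correcting unitaries can be ``taken close to $\1$'', and connectedness of $\U(\1+B)$ coming from $\O2$-stability is far from enough --- one needs paths of unitaries that approximately commute with $\f(\mathcal F)$ along the way. The paper instead converts \emph{mutual} approximate 1-domination into strong asymptotic unitary equivalence by the Cuntz-sum absorption mechanism of \cite[Lemmas 2.7+2.8]{GS22b} (Lemma \ref{lem:2.8}): each of $(\f,\bbu)$, $(\psi,\bbv)$ absorbs the Cuntz sum of the two, the two Cuntz sums are conjugate by a fixed $\beta$-invariant unitary, and conjugation by that unitary is strongly asymptotically inner. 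Moreover, the hypothesis of that lemma is a genuinely commuting \emph{unital} copy of $\O2$, not approximately central ones; the paper arranges this by tensoring, i.e.\ passing to $(\f\otimes\1_{\O2},\bbu\otimes\1_{\O2})$ and $(\psi\otimes\1_{\O2},\bbv\otimes\1_{\O2})$ into $(B\otimes\O2,\beta\otimes\id_{\O2})$, and then pulling the conclusion back along a proper cocycle conjugacy $(B,\beta)\to(B\otimes\O2,\beta\otimes\id_{\O2})$ that is strongly asymptotically unitarily equivalent to the first-factor embedding (Remark \ref{rem:thm5.6}). The 1-domination calculus and the Cuntz-sum absorption are the two ideas missing from your sketch, and without them both the averaging and the path-assembly steps break down.
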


We also establish a unital version of the theorem above under the assumption that $G$ is exact (see Corollary \ref{cor:uniqueness}).\
In Section \ref{sec:existence}, we prove a number of technical results that eventually lead to the existence theorem for proper cocycle morphisms, which can be considered as a dynamical, ideal-related version of the $\O2$-embedding theorem (see also Theorem \ref{thm:existence}).

\begin{theorem}
Let $\alpha:G\curvearrowright A$ be an amenable action on a separable, exact \Cs-algebra, and $\beta:G\curvearrowright B$ an isometrically shift-absorbing and equivariantly $\O2$-stable action on a separable \Cs-algebra.\
Then, for every equivariant $\Cu$-morphism $\Phi:(\I(A),\alpha^{\sharp})\to(\I(B),\beta^{\sharp})$, there exists a proper cocycle morphism $(\f,\bbu):(A,\alpha)\to(B,\beta)$ such that $\I(\f)=\Phi$.
\end{theorem}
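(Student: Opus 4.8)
The plan is to deduce this statement as the non-simple, ideal-tracking generalisation of the dynamical $\O2$-embedding theorem \cite[Theorem G]{GS22b}, feeding into its proof scheme the ideal-related machinery underlying Gabe's $\O2$-stable classification \cite{Gab20}. The guiding idea is to encode a proper cocycle morphism $(\f,\bbu):(A,\alpha)\to(B,\beta)$ as an honest equivariant $\ast$-homomorphism into a sequence algebra: by a standard reindexing argument (Kirchberg's $\e$-test), it suffices to produce a $G$-equivariant $\ast$-homomorphism from $(A,\alpha)$ into the continuous part $B_{\infty,\beta}$ of the sequence algebra, on which $G$ acts continuously, whose induced map on ideal lattices agrees with $\Phi$ in the appropriate asymptotic sense. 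The cocycle $\bbu$ is then recovered from the sequence of unitaries witnessing approximate equivariance, so that the whole problem reduces to building such an equivariant map while controlling its effect on $\I(-)$.

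First I would set up the non-equivariant, approximate existence. Since $A$ is separable and exact and $B$ is equivariantly $\O2$-stable, the sequence algebra inherits enough room that, forgetting the $G$-action, one can realise $\Phi$ by a $\ast$-homomorphism into $B_\infty$ via Gabe's ideal-related $\O2$-embedding theorem \cite{Gab20}. Here one exploits that for $\O2$-stable algebras the ideal lattice and the Cuntz semigroup coincide, so that the $\Cu$-morphism $\Phi$ is exactly the datum prescribing how the ideals of $A$ should map to those of $B$, with the $\O2$-embedding theorem \cite{KP00} supplying the building blocks. The task becomes to produce finitely-supported approximations that send each relevant ideal $I\in\I(A)$ into a corner supported on $\Phi(I)$.

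The decisive step is the equivariant upgrade. Starting from a map that realises $\Phi$ but is only approximately equivariant, I would run a one-sided intertwining inside $B_{\infty,\beta}$ and invoke the averaging argument available for amenable, isometrically shift-absorbing actions \cite{GS22b}: amenability of $\alpha$ furnishes, through the approximately $G$-invariant vectors in $L^2(G,B)$ guaranteed by the QAP (Definition \ref{def:amenability}), a way to average out the discrepancy between $\f\circ\alpha_g$ and $\beta_g\circ\f$, while isometric shift-absorption of $\beta$ (Definition \ref{def:isa}) supplies the isometries needed to absorb the resulting $\beta$-cocycle. Iterating this perturbation and passing to the limit yields an exactly equivariant map together with the cocycle $\bbu$, so that $\Ad(\bbu_g)\circ\beta_g\circ\f=\f\circ\alpha_g$ for all $g\in G$, the continuity of $g\mapsto\bbu_g$ coming from uniform control on compact subsets of $G$.

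The main obstacle, and the part that cannot be imported wholesale from either \cite{GS22b} or \cite{Gab20}, is to guarantee that the identity $\I(\f)=\Phi$ survives all of these perturbations. The averaging and intertwining steps move elements around inside $B_{\infty,\beta}$ and could a priori alter which ideals they generate, so every approximation must be carried out in an ideal-compatible fashion: the building blocks must keep the image of each $I\in\I(A)$ inside $\Phi(I)$, and the averaging must not enlarge these supports. This is precisely where the equivariance of $\Phi$, namely that it intertwines $\alpha^{\sharp}$ and $\beta^{\sharp}$, is indispensable, ensuring that averaging over $G$ respects the assignment $I\mapsto\Phi(I)$. Making this rigorous requires reproving the relevant intermediate results of \cite{Gab20} in a form that simultaneously tracks the $G$-action on $\Prim$, which I expect to constitute the technical heart of the argument.
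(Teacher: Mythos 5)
Your high-level reductions are sound and match the paper's scaffolding: realise $\Phi$ non-equivariantly via Gabe's ideal-related existence theorem, work inside $B_{\infty,\beta}$, and descend to a proper cocycle morphism into $B$ by a reindexing criterion (the paper uses \cite[Theorem 4.10]{Sza21}, see Remark \ref{rem:thm4.10}, where the cocycle $\bbu$ indeed emerges). But your decisive step --- the ``equivariant upgrade'' by iterated averaging/intertwining --- has a genuine gap. Gabe's theorem hands you a $\ast$-homomorphism $\psi:A\to\Oinf\otimes B$ with \emph{no} approximate equivariance whatsoever; since $G$ is an arbitrary second-countable locally compact group, there is no reason $\psi$ should be close to equivariant, and the QAP-averaging of \cite{GS22b} cannot bridge this. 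That averaging (Lemmas \ref{lem:3.15} and \ref{lem:4.2} here) is applied to the \emph{intertwining elements} comparing two maps that are already exactly equivariant; it upgrades approximate 1-domination of underlying $\ast$-homomorphisms to 1-domination of cocycle morphisms, and it cannot manufacture equivariance from scratch. Nor can one simply average $\beta_g\circ\psi\circ\alpha_{g^{-1}}$ over $G$: an average of $\ast$-homomorphisms is merely a c.p.\ map, so multiplicativity is destroyed.

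The paper's actual mechanism is different and exact rather than approximate: one forms the induced map $\hat{\psi}(a)(g)=\beta_g(\psi(\alpha_{g^{-1}}(a)))$ of Notation \ref{not:psihat}, which is an honest $\ast$-homomorphism into $\M(\C_0(G,B))=\C_b^s(G,\M(B))$ and is \emph{equivariant by construction} for the shifted action $\bar\beta$; the price is a tensor factor $(\K(\Hil_G^\infty),\Ad(\lambda^\infty))$, which is then absorbed using isometric shift-absorption via the equivariant embedding $\K(\Hil_G^\infty)\hookrightarrow(\Oinf,\gamma)$ (Remark \ref{rem:cptinclusion}, Lemmas \ref{lem:existence} and \ref{lem:seqargument}). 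Because the multiplier-valued map must be cut down by a quasicentral approximate unit, one only obtains a map on the suspension $SA$ (Lemma \ref{lem:SA}), and the suspension is removed by the crossed-product-by-$\Z$ trick $SA\rtimes_\tau\Z\cong A\otimes\C(\T)\otimes\K$ (Lemma \ref{lem:trick}). It is exactly here --- and in verifying the reindexing criterion --- that amenability of $\alpha$ enters, through the sequence-algebra uniqueness result Lemma \ref{lem:4.3}; it is not used to produce equivariance, contrary to the role you assign it. So while your inventory of ingredients is correct, the proposal as written would stall at the equivariant upgrade, and it also omits the suspension/crossed-product step that the paper cannot avoid.
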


Finally, in the last section, we combine the aforentioned theorems to deduce a bijective correspondence between equivariant morphisms of abstract Cuntz semigroups of ideals and proper cocycle morphisms up to asymptotic unitary equivalence.
From this we will deduce our main result, i.e.\ Theorem \ref{thm:classification}, the more detailed version of Theorem \ref{thm:intro}.\
Thanks to a fact that is implicitly proved in \cite{OS21}, we can furthermore show that a unital version of this theorem holds when $G$ is exact.\
In the case when the acting group $G$ is compact, results in \cite{GS22b} pertaining to asymptotic coboundaries allow us to obtain Corollary \ref{cor:compact-classification}, in which we achieve classification up to genuine conjugacy.\
As a final theorem, we observe that, when $G$ is exact, the assumptions on the classified actions can be replaced with absorption of a certain model action on $\O2$ up to cocycle conjugacy.\
We conclude the article by pointing out to the reader how in some special cases, namely when $G$ is $\mathbb{R},\Z$ or a compact group, one could have obtained our main result using existing techniques available from the literature.

\section{Preliminaries}
\renewcommand\thetheorem{\arabic{section}.\arabic{theorem}}

\begin{notation}
The multiplier algebra and the proper unitization\footnote{Add a unit even if $A$ is already unital.} of a \Cs-algebra $A$ are denoted by $\M(A)$ and $A^\dagger$, respectively.\
The unit of $\M(A)$ and $A^\dagger$ is denoted by $\1$.\
We adopt the notation $\U(\1+A)$ for $(\1+A)\cap\U(A^\dagger)$, which is canonically isomorphic to $\U(A)$ when $A$ is unital.\
The \Cs-algebra of compact operators on a Hilbert space $\Hil$ is denoted by $\K(\Hil)$, and when $\Hil$ is separable and infinite-dimensional one normally uses the shorthand $\K$.\
Throughout the paper, $G$ will denote a second-countable, locally compact group, unless specified otherwise.\
The notation $\id_A$ denotes the identity map on the \Cs-algebra $A$, and, with slight abuse of notation, also the trivial $G$-action on $A$.\
If $\alpha:G\curvearrowright A$ is an action on a \Cs-algebra, $A^\alpha$ denotes the $\Cs$-subalgebra of $A$ of fixed points under the action $\alpha$.\
Moreover, the canonical extension of $\alpha$ to an action on $\M(A)$ is denoted by $\alpha$ as well.\
If $u$ is a unitary in $\U(\1+A)$ or $\U(\M(A))$, then $\Ad(u)\in\Aut(A)$ denotes the inner automorphism of $A$ given by $a \mapsto uau^*$.\
Additionally, we use $A\odot B$, $A\otimes B$ and $A\otimes_{\max}B$ for algebraic, spatial and maximal tensor product of two $\Cs$-algebras $A$ and $B$, respectively.\
For two elements $a,b$ in a \Cs-algebra and $\e>0$, the shorthand $a=_{\e} b$ stands for the inequality $\|a-b\|\leq\e$.\
Closure and interior of a subset $U$ of a topological space $X$ are denoted by $\overline{U}$ and $U^{\circ}$, respectively.
Finally, we adopt the convention that $0\in\N$.
\end{notation}

\begin{definition}\label{def:sequencealgebra}
Let $\alpha:G\curvearrowright A$ be an action on a \Cs-algebra.\
The \textit{sequence algebra} of $A$ is
\begin{equation*}
A_{\infty}=\ell^{\infty}(\N,A)/c_0(\N,A).
\end{equation*}
We denote by $\iota_{\infty}$ the canonical embedding $A \hookrightarrow A_{\infty}$ when $A$ is understood from the context.

The \textit{$\alpha$-continuous sequence algebra} of $A$ is given by
\begin{equation*}
A_{\infty,\alpha}=\big\{ x\in A_\infty \mid [g\mapsto\alpha_{\infty,g}(x)] \text{ is continuous}\big\},
\end{equation*}
where $\alpha_{\infty}:G\to\Aut(A_\infty)$ is the not necessarily continuous action obtained by applying $\alpha$ componentwise.
By construction, $\alpha_\infty$ is a continuous action on $A_{\infty,\alpha}$.\
Moreover, as a consequence of \cite[Theorem 2]{Bro00}, $A_{\infty,\alpha}$ coincides with the quotient \Cs-algebra
\begin{equation*}
\ell^{\infty}_\alpha(\N,A)/c_0(\N,A),
\end{equation*}
where $\ell^{\infty}_\alpha(\N,A)=\big\{ (x_n)_{n\in\N}\in\ell^\infty(\N,A) \mid [g\mapsto (\alpha_g(x_n))_{n\in\N}] \text{ is continuous}\big\}$.

Let $B$ be an $\alpha_\infty$-invariant $\Cs$-subalgebra of $A_{\infty,\alpha}$.\
Then, $\alpha_\infty$ restricts to an action on the \textit{$\alpha$-continuous relative central sequence algebra},
\begin{equation*}
A_{\infty,\alpha} \cap B' = \{x\in A_{\infty,\alpha} \mid xb=bx \text{ for all } b\in B\},
\end{equation*}
as well as on the annihilator of $B$ in $A_{\infty,\alpha}$,
\begin{equation*}
A_{\infty,\alpha}\cap B^{\perp} = \{ x\in A_{\infty,\alpha} \mid xb=bx=0 \text{ for all } b\in B\}.
\end{equation*}
The induced action on $F(B,A_{\infty,\alpha})=(A_{\infty,\alpha} \cap B') / (A_{\infty,\alpha}\cap B^{\perp})$ is denoted by $\tilde{\alpha}_\infty$.\
Note that $F(B,A_{\infty,\alpha})$ is unital whenever $B$ is $\sigma$-unital.
If $A=B$, then one writes $F_{\infty,\alpha}(A)=F(A,A_{\infty,\alpha})$.
\end{definition}

\begin{notation}
Let $\mu$ be a left Haar measure on $G$.\
We denote the Hilbert spaces $L^2(G,\mu)$ and $\ell^2(\N)\hat{\otimes}L^2(G,\mu)$ by $\Hil_G$ and $\Hil_G^\infty$, respectively.\
Moreover, the left-regular representation of $G$ is denoted by $\lambda:G\to\U(\Hil_G)$, i.e., $\lambda_g(\xi)(h)=\xi(g^{-1}h)$ for all $\xi\in\Hil_G$ and $g,h\in G$, and its infinite repeat by $\lambda^\infty$, i.e., $\lambda^{\infty}=\1_{\ell^2(\N)}\otimes\lambda:G\to\U(\Hil_G^\infty)$.
\end{notation}

\begin{definition}[{see \cite[Definition 3.7]{GS22b}}]\label{def:isa}
Let $\alpha:G\curvearrowright A$ be an action on a separable $\Cs$-algebra.\
We say that $\alpha$ is \textit{isometrically shift-absorbing} if there exists an equivariant linear map
\begin{equation*}
\mathfrak{s}: (\Hil_G,\lambda)\to(F_{\infty,\alpha}(A),\tilde{\alpha}_{\infty})
\end{equation*}
that satisfies $\mathfrak{s}(\xi)^*\mathfrak{s}(\eta)=\langle \xi, \eta \rangle \cdot \1$ for all $\xi,\eta\in\Hil_G$.
\end{definition}

\begin{notation}\label{not:Hilbert}
Let $\alpha:G\curvearrowright A$ be an action on a \Cs-algebra, and $\Hil$ a Hilbert space.\
We consider the external tensor product $\Hil \otimes A$ (see \cite[Chapter 4]{Lan95}), where $A$ is viewed as a $G$-Hilbert bimodule with its identity $(A,\alpha)$-bimodule structure.\
Let $\sigma:G \to \U(\Hil)$ be a unitary representation that is continuous with respect to the strong operator topology.\
It follows that $\sigma\otimes\alpha:G\curvearrowright \Hil\otimes A$ is an action by linear isometries turning $\Hil\otimes A$ into a Hilbert $(A,\alpha)$-bimodule as well.\
We apply this construction to $\Hil_G$ and $\Hil_G^\infty$, and adopt the notation
\begin{equation*}
L^2(G,A):=\Hil_G\otimes A,\quad L^2_\infty(G,A):=\Hil_G^\infty\otimes A.
\end{equation*}
Note that $\C_c(G,A)$ is densely contained in $L^2(G,A)$ as a pre-Hilbert module with inner product given by
\begin{equation*}
\langle \xi, \eta \rangle = \int_{G}\xi(g)^*\eta(g) \, d\mu(g),
\end{equation*}
for all $\xi,\eta \in \C_c(G,A)$, and norm given by $\|\cdot\|_2=\|\langle \cdot, \cdot \rangle\|^{1/2}$.\
Moroever, we denote by $\bar{\alpha}=\lambda\otimes\alpha$ the $G$-action by isometries on $L^2(G,A)$, which is given by $\bar{\alpha}_g(\xi)(h)=\alpha_g(\xi(g^{-1}h))$ for all $g,h\in G$, and $\xi\in\C_c(G,A)$.
\end{notation}

\begin{remark}[see {\cite[Remark 3.1]{GS22b}}]\label{rem:gamma}
Recall \cite{Eva80} that for any separable, infinite-dimensional Hilbert space $\Hil$, the Cuntz algebra $\Oinf$ is isomorphic to $\mathcal{O}_{\Hil}$, the universal unital \Cs-algebra generated by the range of a linear map $\mathfrak{s}:\Hil \to \B(\Hil)$ subject to the relation $\mathfrak{s}(\xi)^*\mathfrak{s}(\eta)=\langle \xi, \eta \rangle \cdot \1$ for all $\xi,\eta \in \Hil$.\
As a consequence, every unitary $U$ on $\Hil$ gives rise to a unique automorphism on $\mathcal{O}_{\Hil}$ such that $\mathfrak{s}(\xi)$ is sent to $\mathfrak{s}(U\xi)$ for all $\xi\in\Hil$.\
The resulting assignment $\U(\Hil)\to\Aut(\mathcal{O}_{\Hil})$ is a group homomorphism that is continuous with respect to the strong operator topology on $\U(\Hil)$ and the point-norm topology on $\Aut(\mathcal{O}_{\Hil})$.\
Any group action on $\Oinf$ that is conjugate to one factoring through this homomorphism is said to be \textit{quasi-free}.
\end{remark}

\begin{definition}[{see \cite[Definition 3.4]{GS22b}}]\label{def:gamma}
Define $\gamma:G\curvearrowright \Oinf\cong\mathcal{O}_{\Hil_G^{\infty}}$ to be the quasi-free action determined by $\gamma_g \circ \mathfrak{s} = \mathfrak{s} \circ \lambda^{\infty}_g$ for all $g\in G$.
\end{definition}

\begin{remark}[{see \cite[Proposition 3.8]{GS22b}}]\label{rem:isa}
Assume that $G\neq\{1\}$.
Let $\beta:G\curvearrowright B$ be an action on a separable \Cs-algebra, and $\gamma:G\curvearrowright \Oinf$ the quasi-free action from Definition \ref{def:gamma}.
The following are equivalent,
\begin{enumerate}[label=(\roman*),leftmargin=*]
\item $\beta$ is isometrically shift-absorbing;
\item There exists a unital equivariant $\ast$-homomorphism $(\Oinf,\gamma)\to(F_{\infty,\beta}(B),\tilde{\beta}_\infty)$.
\end{enumerate}
\end{remark}

We may adopt the quasicentral approximation property as our definition of amenability since they are equivalent as a result of recent work by Buss--Echterhoff--Willett \cite{BEW20a,BEW20}, Suzuki \cite{Suz19} and Ozawa--Suzuki \cite{OS21}.

\begin{definition}[{cf.\ \cite[Definition 2.11]{OS21}}]\label{def:amenability}
Let $\alpha: G\curvearrowright A$ be an action on a $\Cs$-algebra.
Then $\alpha$ is said to be \textit{amenable} if there exists a net of contractions $\zeta_i\in\C_c(G,A)$ such that
\begin{align*}
&\langle \zeta_i , \zeta_i \rangle \to \1 \ \text{ strictly},
&\|a\zeta_i - \zeta_ia\|_2 \to 0,&
&\max_{g\in K}\|(\zeta_i - \bar{\alpha}_g(\zeta_i)) a\|_2 \to 0
\end{align*}
for all $a\in A$ and compact sets $K\subseteq G$.
\end{definition}


In the rest of the preliminary section, we remind the reader of basic definitions and results regarding the categorical framework for classifying \Cs-dynamical systems developed by the second named author in \cite{Sza21}.

\begin{definition}
Let $A$ and $B$ be \Cs-algebras.
A $\ast$-homomorphism $\f:A\to\M(B)$ is \textit{extendible} if, for any increasing approximate unit $\{e_\lambda\}_{\lambda\in\Lambda}\subseteq A$, the net $\{\f(e_\lambda)\}_{\lambda\in\Lambda}$ converges strictly to a projection in $\M(B)$.
\end{definition}

\begin{definition}
Let $\beta:G\curvearrowright B$ be an action on a \Cs-algebra.
A strictly continuous map $\bbu:G\to\U(\M(B))$ is a \textit{$\beta$-cocycle} if it satisfies $\bbu_{gh}=\bbu_g\beta_g(\bbu_h)$ for all $g,h\in G$.
\end{definition}

In the categorical framework defined in \cite{Sza21}, the right notion of cocycle morphism between \Cs-dynamical systems is given by a pair $(\f,\bbu)$ consisting of an extendible $\ast$-homomorphism $\f$ and a $\beta$-cocycle $\bbu$ satisfying the equivariance condition $\f \circ \alpha_{\bullet} = \Ad(\bbu_{\bullet}) \circ \beta_{\bullet} \circ \f$.\
Since extendibility of $\ast$-homomorphisms is not necessary in the context of the present work, we adopt the following definition of cocycle morphism, which is also the convention used in \cite[Section 6]{Sza21} and does not require $\f$ to be extendible.

\begin{definition}[{cf. \cite[Definition 1.10]{Sza21}}]\label{def:comorp}
Let $\alpha:G\curvearrowright A$ and $\beta:G\curvearrowright B$ be actions on \Cs-algebras.\
A \textit{cocycle morphism} from $(A,\alpha)$ to $(B,\beta)$ is a pair $(\f,\bbu)$ consisting of a $\ast$-homomorphism $\f:A\to B$ and a $\beta$-cocycle $\bbu:G\to\U(\M(B))$ satisfying the equivariance condition
\begin{equation*}
\f \circ \alpha_g = \Ad(\bbu_g) \circ \beta_g \circ \f
\end{equation*}
for all $g\in G$.
When $\bbu$ is a norm-continuous $\beta$-cocycle with values in $\U(\1+B)$, $(\f,\bbu)$ is said to be a \textit{proper cocycle morphism}.
\end{definition}

\begin{remark}[see {\cite[Proposition 1.15]{Sza21}}]\label{rem:composition}
The composition of two proper cocycle morphisms $(\f,\bbu):(A,\alpha)\to(B,\beta)$ and $(\psi,\bbv):(B,\beta)\to(C,\gamma)$, is the proper cocycle morphism given by the pair
\begin{equation*}
(\psi,\bbv) \circ (\f,\bbu) := (\psi \circ \f, \psi^{\dagger}(\bbu) \bbv):(A,\alpha)\to(C,\gamma),
\end{equation*}
where $\psi^\dagger:B^\dagger \to C^\dagger$ is the unique unital extension of $\psi$.
\end{remark}

\begin{notation}
When $(\f,\bbu):(A,\alpha)\to(B,\beta)$ is a (proper) cocycle morphism and $\f$ is injective, $(\f,\bbu)$ is said to be a \textit{(proper) cocycle embedding}.\
In the same spirit, $(\f,\bbu)$ is said to be a \textit{(proper) cocycle conjugacy} if $\f$ is an isomorphism, and when this exists, $(A,\alpha)$ and $(B,\beta)$ are said to be \textit{(properly) cocycle conjugate}.\
Moreover, we will write $\alpha\cc\beta$ when $(A,\alpha)$ and $(B,\beta)$ are cocycle conjugate.
\end{notation}

\begin{definition}[{cf.\ \cite[Definitions 2.8+4.1]{Sza21} and \cite[Definition 1.15]{GS22b}}]
Let
\[
(\f,\bbu),(\psi,\bbv):(A,\alpha)\to(B,\beta)
\]
be two cocycle morphisms.\
\begin{enumerate}[label=\textup{(\roman*)},leftmargin=*]
\item $(\f,\bbu)$ and $(\psi,\bbv)$ are said to be \textit{(properly) unitarily equivalent} if there exists a unitary $u\in\U(\M(B))$ (respecively, $u\in\U(\1+B)$) such that
\begin{equation*}
 \psi(a) = u\f(a)u^*, \quad \bbv_g = u \bbu_g \beta_g(u)^*
\end{equation*}
for all $a\in A$, and $g\in G$.
\item $(\f,\bbu)$ and $(\psi,\bbv)$ are said to be \textit{strongly asymptotically unitarily equivalent} if there exists a norm-continuous path of unitaries $u:\left[0,\infty\right) \to \U(\1+B)$ starting at $u_0=\1$ that satisfies
\begin{equation*}
\psi(a) = \lim_{t\to\infty} u_t\f(a)u_t^*, \text{ for all }a\in A,
\end{equation*}
for all $a\in A$, and
\begin{equation*}
\lim_{t\to\infty} \max_{g\in K} \|\bbv_g -u_t\bbu_g\beta_g(u_t)^*\| = 0,
\end{equation*}
for all compact subsets $K\subseteq G$.
\end{enumerate}
\end{definition}

\begin{remark}[{see \cite[Notation 1.3+Proposition 1.4]{GS22b}}]\label{rem:strstable}
An action $\beta:G\curvearrowright B$ on a $\Cs$-algebra is said to be \textit{strongly stable} if $(B,\beta)$ is conjugate to $(B\otimes\K,\beta\otimes\id_{\K})$.\
If $B$ is a stable \Cs-algebra, then every action $\beta:G\curvearrowright B$ is cocycle conjugate to $\beta\otimes\id_{\K}:G\curvearrowright B\otimes\K$.
\end{remark}

The following definition is a generalization of strongly self-absorbing \Cs-algebra \cite{TW07} for group actions.

\begin{definition}[{see \cite[Definition 3.1]{Sza18a} and \cite[Definition 5.3]{Sza21}}]
Let $\delta:G\curvearrowright \mathcal{D}$ be an action on a separable, unital \Cs-algebra.\
The action $\delta$ is said to be \textit{strongly self-absorbing} if there exists a cocycle conjugacy
\begin{equation*}
(\f,\bbu) : (\mathcal{D},\delta) \to (\mathcal{D}\otimes \mathcal{D},\delta\otimes\delta)
\end{equation*}
and a sequence of unitaries $v_n\in\mathcal \U(\mathcal{D}\otimes\mathcal{D})$ such that
\[
\f(x)=\lim_{n\to\infty} v_n(x\otimes\textbf{1})v_n^* \quad\text{and}\quad \lim_{n\to\infty} \max_{g\in K} \|\bbu_g -v_n(\delta\otimes\delta)_g(v_n)^*\| = 0
\]
for all $x\in\mathcal D$ and every compact set $K\subseteq G$.

An action $\alpha:G\curvearrowright A$ on a separable \Cs-algebra is said to be \textit{$\delta$-stable} or \textit{$\delta$-absorbing}, if $\alpha$ is cocycle conjugate to $\alpha\otimes\delta$.
More specifically, $\alpha$ is called \textit{equivariantly $\mathcal{D}$-stable} if $\alpha\otimes\id_{\mathcal{\mathcal{D}}}$ is cocycle conjugate to $\alpha$.
\end{definition}

\begin{remark}[{see \cite[Theorem 5.6]{Sza21}}]\label{rem:thm5.6}
Let $\alpha:G\curvearrowright A$ be an action on a separable \Cs-algebra.\
Then, $\alpha$ is equivariantly $\O2$-stable if and only if the equivariant first factor embedding
\begin{equation*}
\id_A \otimes \1_{\O2}: (A,\alpha) \to (A\otimes\O2,\alpha\otimes\id_{\O2})
\end{equation*}
is strongly asymptotically unitarily equivalent to a proper cocycle conjugacy.
\end{remark}

\section{Ideal-related invariants and induced dynamics}

\begin{notation}
Let $A$ be a $\Cs$-algebra.
The set of all closed two-sided ideals of $A$ is denoted by $\I(A)$.\
Note that the partial order induced by inclusion of ideals turns $\I(A)$ into a partially ordered set.\
One may moreover define the join (or supremum) of two ideals $I,J\in\I(A)$ to be $I+J\in\I(A)$ and their meet (or infimum) to be $I\cap J=I\cdot J$.\
Consequently, $\I(A)$ is an order-theoretic lattice, called the \textit{ideal lattice} of $A$.\
Furthermore, $\I(A)$ is complete, i.e., every family of ideals has an infimum and a supremum.
\end{notation}

Let us now recall some useful properties of the ideal lattice of a \Cs-algebra from \cite[Section 2]{Gab20}.

\begin{definition}
Let $I$ and $J$ be ideals of a $\Cs$-algebra $A$.\
One says that $I$ is \textit{compactly contained} in $J$, and write $I\Subset J$, if for any family of ideals $\{I_\lambda\}_{\lambda\in\Lambda}\subseteq\I(A)$ such that $J\subseteq \overline{\sum_{\lambda}I_\lambda}$ there exist finitely many $\lambda_1,\dots,\lambda_n\in\Lambda$ such that $I\subseteq \sum_{i=1}^n I_{\lambda_n}$.
\end{definition}

\begin{remark}\label{rem:increasingfamily}
Observe that two ideals $I,J$ of a \Cs-algebra $A$ satisfy $I\Subset J$ if and only if for every upward directed family of ideals $\{I_\lambda\}_{\lambda\in\Lambda}$ with $J\subseteq \overline{\bigcup_{\lambda\in\Lambda}I_\lambda}$, there exists $\lambda\in\Lambda$ such that $I\subseteq I_\lambda$.\
Let us show the nontrivial implication.\
Assume that $J\subseteq \overline{\sum_{\lambda}I_\lambda}$ for some (not necessarily directed) family of ideals $\{I_\lambda\}_{\lambda\in\Lambda}$.\
The upward directed family of ideals given by $\{\sum_{\gamma\in\Gamma}I_\gamma \mid \Gamma\subseteq\Lambda\text{ finite}\}$ has the same supremum as $\{I_\lambda\}_{\lambda\in\Lambda}$.\
Hence, there exist $\lambda_1,\dots,\lambda_n \in \Lambda$ such that $I\subseteq\sum_{i=1}^n I_{\lambda_i}$, i.e., we have verified that $I\Subset J$.
\end{remark}

\begin{notation}
Let $a$ be a positive element of a \Cs-algebra $A$.\
We write $(a-\e)_+$ for the element of $A$ given by $f(a)$ via functional calculus, where $f(t)=\max\{0,t-\e\}$.
\end{notation}

\begin{remark}[{see \cite[Lemma 2.2+Proposition 6.5]{Gab20}}]\label{rem:ideallattice}
Let $A$ be a \Cs-algebra.\
The following statements hold true.\
\begin{enumerate}[label=\textup{(\roman*)},ref=\textup{\roman*},leftmargin=*]
\item For any positive element $a\in A$, one has that $\overline{A(a-\e)_{+}A} \Subset \overline{AaA}$.
\item If $A$ is weakly purely infinite (see \cite[Definition 1.2]{BK04}) and $I \in \I(A)$, then
\begin{equation*}
\overline{A_{\infty} I A_{\infty}} = \overline{\bigcup_{J\Subset I} J_{\infty}}.
\end{equation*}
\end{enumerate}
Recall that if $A$ is purely infinite (see \cite[Definition 4.1]{KR00}) then it is weakly purely infinite.
\end{remark}

We define here the category of abstract Cuntz semigroups because it will be used as ambient category for the ideal lattice of separable \Cs-algebras.

\begin{definition}\label{def:abstractCu}
Let $S$ be a positively ordered abelian monoid, i.e., an abelian monoid $(S,+,0)$ with partial order $\leq$ such that $0\leq x$ for all $x\in S$, and $x+y\leq x'+y'$ whenever $x,x',y,y'\in S$ and $x\leq x'$ and $y\leq y'$.\
Let $\ll$ denote the way-below relation on $S$, which is defined as follows:\ For any $x,y\in S$, one writes $x\ll y$ if for every increasing sequence $(z_n)_{n\in\N}$ in $S$ with supremum $z$ such that $y\leq z$, there exists $n\in\N$ such that $x\leq z_n$.
One says that $S$ is an \textit{abstract Cuntz semigroup} if the following conditions are satisfied:
\begin{enumerate}[label=\textup{(\roman*)},leftmargin=*]
\item Every increasing sequence in $S$ admits a supremum,
\item For every $x\in S$, there exists an increasing sequence $(x_n)_n$ in $S$ with $x_n\ll x_{n+1}$ for all $n\in\N$ such that $x=\sup_n x_n$,
\item If $x\ll x'$ and $y\ll y'$ for $x,x',y$ and $y'$ in $S$, then $x+y\ll x'+y'$,
\item For every pair of increasing sequences $(x_n)_n$ and $(y_n)_n$ in $S$, $\sup_n (x_n+y_n)=\sup_n x_n + \sup_n y_n$.
\end{enumerate}

We denote by $\Cu$ the category of abstract Cuntz semigroups with morphisms given by ordered monoid homomorphisms that preserve countable increasing suprema and way-below relation.
\end{definition}

The Cuntz semigroup of a \Cs-algebra is the motivating example for the category of abstract Cuntz semigroups defined above.

\begin{definition}[{see \cite{Cun78,CEI08,APT11}}]\label{def:Cu(A)}
Let $x$ and $y$ be positive elements in a \Cs-algebra $A$.\
Then $x$ is said to be \textit{Cuntz subequivalent} to $y$ if there exists a sequence $(a_n)_{n\in\N}$ in $A$ such that $x=\lim_{n\to\infty}a_n^*ya_n$.\
Two positive elements $x$ and $y$ are said to be \textit{Cuntz equivalent} if they are Cuntz subequivalent to each other.\
Cuntz subequivalence is a preorder on the set of positive elements of $A$, and Cuntz equivalence is an equivalence relation.\
The \textit{Cuntz semigroup} of $A$, denoted by $\Cus(A)$, is the abelian monoid given by the quotient set of $(A\otimes\K)_+$ by Cuntz equivalence, equipped with the operation induced by direct sum in $(A\otimes\K)_+$.
\end{definition}

\begin{remark}\label{rem:RordamLemma}
We recall here a few results on Cuntz comparison of positive elements in a \Cs-algebra that were established by R{\o}rdam \cite[Section 2]{Ror92}, and Kirchberg--R{\o}rdam \cite[Section 2]{KR00}, \cite[Section 2]{KR02}.\
Let $a$ and $b$ be positive elements of a \Cs-algebra $A$.\
\begin{enumerate}[label=\textup{(\roman*)},leftmargin=*]
\item If $\e>0$ and $\|a-b\|<\e$, then there exists a contraction $c\in A$ such that $(a-\e)_+ = c^*bc$.\
In particular, $(a-\e)_+$ is Cuntz subequivalent to $b$ if $a=_\e b$.
\item If $a$ is Cuntz subequivalent to $b$, then for every $\e>0$ there exist $d_0,d_1\in A$ and $n\in\N$ such that $(a-\e)_+=d_0^*bd_0=d_1^*(b-2^{-n})_+d_1$.
\end{enumerate}
Statement (i) is \cite[Lemma 2.2]{KR02} (see also \cite[Proposition 2.2]{Ror92} and \cite[Lemma 2.5(ii)]{KR00}), while (ii) follows from the following observation.\
By \cite[Proposition 2.4]{Ror92} (or \cite[Proposition 2.6]{KR00}), for every $\e>0$ there exists $\delta>0$ and $d\in A$ such that $(a-\e)_+=d^*(b-\delta)_+d$.\
Then one may argue with \cite[Lemma 2.4(i)]{KR02} to conclude that there exists $d_0\in A$ such that $(a-\e)_+=d_0^*bd_0$.\
Choose $n\in\N$ such that $2^{-n}<\delta$, and set $\delta':=\delta-2^{-n}$.\
It follows from (i) that there exists a contraction $c\in A$ such that $(b-\delta)_+=((b-2^{-n})_+-\delta')_+=c^*(b-2^{-n})_+c$.\
Therefore, we have that $(a-\e)_+=d_1^*(b-2^{-n})_+d_1$, where $d_1:=cd$.
\end{remark}

The object we name below will allow us to view (a subset of) the ideal lattice as an object in $\Cu$.

\begin{notation}\label{not:Isigma}
Let $A$ be a \Cs-algebra.\
We denote by $\I_\sigma(A)\subseteq\I(A)$ the subset of ideals that contain a full element.
\end{notation}

\begin{remark}\label{rem:waybelowideals}
Let $A$ be a \Cs-algebra, $I\in\I(A)$, and $J\in\I_{\sigma}(A)$.\
We observe that $I\Subset J$ if and only if for every increasing sequence of ideals $(I_n)_{n\in\N}$ of $A$ such that $J\subseteq\overline{\bigcup_{n\in\N}I_n}$, there exists $n\in\N$ such that $I\subseteq I_n$.\
Let us show the nontrivial implication.\
Assume that $J\subseteq\overline{\bigcup_{\lambda}I_\lambda}$ for an upward directed family of ideals $\{I_\lambda\}_{\lambda\in\Lambda}$ of $A$, and write $J=\overline{AaA}$ for some $a\in A_+$.\
By Remark \ref{rem:ideallattice}(i) we have that $\overline{A(a-2^{-n})_+A}\Subset J$ for all $n\in\N$.\
Hence, for each $n\in\N$, there exists $\lambda_n\in\Lambda$ such that $\overline{A(a-2^{-n})_+A}\subseteq I_{\lambda_n}$.\
This implies that
\begin{equation*}
J=\overline{\bigcup_{n\in\N}\overline{A(a-2^{-n})_+A}}\subseteq\overline{\bigcup_{n\in\N}I_{\lambda_n}},
\end{equation*}
and by assumption there exists $n\in\N$ such that $I\subseteq I_{\lambda_n}$.\
In light of Remark \ref{rem:increasingfamily}, we have shown the equivalence.
\end{remark}

\begin{proposition}[{cf.\ \cite[Proposition 2.5]{Gab20}}]\label{prop:Cu}
Let $A$ be a $\Cs$-algebra.\
Then $\I_{\sigma}(A)$ is an abstract Cuntz semigroup.\
In particular, $\I(A)$ is an abstract Cuntz semigroup if and only if $\I(A)=\I_{\sigma}(A)$.
\end{proposition}
\begin{proof}
We have that $(\I_{\sigma}(A),\subseteq,+,0)$ is a positively ordered submonoid of $(\I(A),\subseteq,+,0)$, and the way-below relation is equivalent to compact containment by Remark \ref{rem:waybelowideals}.\
In the following paragraph, we show that $\I_{\sigma}(A)$ is an abstract Cuntz semigroup by showing that all the attributes from Definition \ref{def:abstractCu} hold true.\
Let us first show that it is closed under passing to suprema of increasing sequences.\
Fix an increasing sequence of ideals $(I_n)_{n\in\N}$, where $I_n=\overline{Aa_nA}$ and $a_n\in A$ are positive contractions of $A$ for all $n\in\N$.\
Then, it follows that $\sum_{n\in\N}2^{-n}a_n$ is a full element of $\overline{\bigcup_{n\in\N}I_n}$, which is therefore contained in $\I_{\sigma}(A)$.
Now, let $I=\overline{AaA}\in\I_\sigma(A)$ for some positive element $a\in A$.\
Then, the sequence in $\I_{\sigma}(A)$ given by $I_n=\overline{A(a-2^{-n})_+A}$ for $n\in\N$ is such that $\overline{A(a-2^{-n})_+A}\Subset\overline{A(a-2^{-(n+1)})_+A}$ for all $n\in\N$.\
Since $\overline{\bigcup_{n\in\N}I_n}=I$, we have shown the second point in the definition of abstract Cuntz semigroups.\
Suppose now that $I,I'$ are compactly contained in $J,J'$, respectively, where $I,I',J,J'\in\I_{\sigma}(A)$.\
For any increasing sequence $(K_n)_{n\in\N}$ in $\I_{\sigma}(A)$ such that $(I'+J')\subseteq\overline{\bigcup_{n\in\N}K_n}$, there exist $n_I,n_J \in \N$ for which $I\subseteq K_{n_I}$, and $J\subseteq K_{n_J}$.\
This implies that $(I+J)\subseteq K_{\max(n_I,n_J)}$, and thus $(I+J)\Subset(I'+J')$.\
Assume now that $(I_n)_n$ and $(J_n)_n$ are increasing sequences in $\I_{\sigma}(A)$.\
One has that $\overline{\bigcup_{n}I_n}, \overline{\bigcup_{n}J_n} \subseteq \overline{\bigcup_{n}(I_n+J_n)}$.\
Moreover, $\overline{\bigcup_{n}(I_n+J_n)}\subseteq(\overline{\bigcup_{n}I_n}+\overline{\bigcup_{n}J_n})$ because the former is the smallest ideal containing $I_n+J_n$ for all $n\in\N$.
Since from \cite[Corollary 2.3]{Gab20} an ideal $I\in\I(A)$ has a full element if and only if it is the supremum of a $\Subset$-increasing sequence of ideals, it follows that $\I(A)$ is an abstract Cuntz semigroup if and only if $\I(A)=\I_{\sigma}(A)$.
\end{proof}

\begin{definition}
Let $S$ be an abstract Cuntz semigroup.\
An \textit{ideal} $I$ of $S$ is a submonoid $I\subseteq S$ closed under passing to suprema of increasing sequences, and such that $a\leq b$ and $b\in I$ imply that $a\in I$.\
The set of ideals of $S$ is denoted by $\I(S)$, and it is a complete lattice.
\end{definition}

\begin{remark}\label{rem:Cu(A)}
Contrarily to $\I(A)$, the Cuntz semigroup of $A$ is always an object in the category $\Cu$ by \cite[Theorem 1]{CEI08}.\
Therefore, they can be substantially different in general.\
However, the ideal lattice of $\Cus(A)$ is always order-isomorphic to $\I(A)$ via the map that associates $\Cus(I)\in\I(\Cus(A))$ to the ideal $I\in\I(A)$.\
As observed by Gabe in \cite{Gab20}, note that if $B$ is another \Cs-algebra, any order-theoretic isomorphism $\I_{\sigma}(A)\to\I_{\sigma}(B)$ is automatically an isomorphism of abstract Cuntz semigroups.

When $A$ is a purely infinite \Cs-algebra in the sense of \cite[Definition 4.1]{KR00}, it is well-known that two elements of $A_+$ are Cuntz equivalent precisely when they generate the same ideal in $A$.\
Therefore, the map $\Cus(A)\to\I(A)$, realised by associating to the class of an element in $(A\otimes\K)_+$ the ideal it generates as an element of $\I(A\otimes\K)\cong\I(A)$, is an order-theoretic isomorphism onto its image.\
In particular, we claim that $\Cus(A)\cong\I_\sigma(A)$ under this map.\
In fact, an equivalence class $[a]\in\Cus(A)$, where $a\in(A\otimes\K)_+$, is mapped to the ideal in $\I_{\sigma}(A\otimes\K)\cong\I_{\sigma}(A)$ that contains $a$ as a full element.\
Furthermore, for any $I\in\I_{\sigma}(A\otimes\K)\cong\I_{\sigma}(A)$ with full element $a$, we have by pure infiniteness that the only class in $\Cus(A)$ mapping to $I$ is the class of $a$.

On the other hand, under the assumption that $\I_{\sigma}(A)=\I(A)$, such as when $A$ is separable, and that $A$ is purely infinite, one has that $\I(A)$ is order isomorphic to $\Cus(A)$.
\end{remark}

\begin{definition}[{see \cite[Definition 2.6]{Gab20}}]\label{def:Cumorphism}
Let $A,B$ be $\Cs$-algebras.\
A monoid order homomorphism $\Phi:\I(A)\to\I(B)$ is said to be a \textit{$\Cu$-morphism} if it preserves increasing suprema and compact containment.
\end{definition}

\begin{notation}
Let $A,B$ be $\Cs$-algebras, and $\f:A\to B$ a $\ast$-homomorphism.\
Denote by $\I(\f):\I(A)\to\I(B)$ the order preserving map given by
\begin{equation*}
 \I(\f)(I)=\overline{B\f(I)B} \in\I(B)
 \end{equation*}
 for all $I\in\I(A)$.\
 We denote by $\I_{\sigma}(\f)$ the restriction of $\I(\f)$ to $\I_{\sigma}(A)$.
\end{notation}

\begin{lemma}
Let $A$ and $B$ be $\Cs$-algebras.\
For any $\ast$-homomorphism $\f:A\to B$, the map $\I(\f)$ is a $\Cu$-morphism, and $\I_{\sigma}(\f)$ is a $\Cu$-morphism with range in $\I_{\sigma}(B)$.
\end{lemma}
\begin{proof}
By \cite[Lemma 2.12(iii)]{Gab20}, $\I(\f)$ is a $\Cu$-morphism.\
Moreover, it follows from \cite[Corollary 2.3]{Gab20}, that the image of an ideal with a full element under a $\Cu$-morphism contains a full element.\
Hence, the image of $\I_{\sigma}(A)$ under $\I_{\sigma}(\f)$ is contained in $\I_{\sigma}(B)$.
\end{proof}

\begin{remark}[{see \cite[Remark 2.8]{Gab20}}]
Note that, for any pair of \Cs-algebras $A,B$, a map $\Phi:\I(A)\to\I(B)$ is a $\Cu$-morphism if and only if it preserves suprema of arbitrary families of ideals, and compact containment.\
In fact, if $\Phi$ preserves suprema, then it must be an ordered monoid homomorphism as a consequence of the following basic observations.
First, we have that $\Phi$ preserves zero element and sums because $\sup\big\{\{0\}\big\}=\{0\}$, and $\sup\{I,J\}=I+J$ for all $I,J\in\I(A)$.\
Moreover, the order is preserved because, when $I\subseteq J$, one has that $\Phi(I)\subseteq\sup\{\Phi(I),\Phi(J)\}=\Phi(\sup\{I,J\})=\Phi(J)$.
\end{remark}

\begin{notation}
Let $\Phi,\Psi:\I(A) \to \I(B)$ be $\Cu$-morphisms.\
One writes $\Phi\leq \Psi$ whenever $\Phi(I)\subseteq\Psi(I)$ for all $I\in\I(A)$.
\end{notation}

Observe that $\I_{\sigma}(-)$ (respectively, $\I(-)$) is functorial on the category of (separable) $\Cs$-algebras with $\ast$-homomorphisms as arrows to the category $\Cu$.

\begin{definition}[{see \cite[Definition 3.13.7]{Ped18}}]
Let $A$ be a \Cs-algebra and $I\subseteq A$ a closed two-sided ideal.\
$I$ is said to be \textit{prime} if whenever $J,K\subseteq A$ are ideals such that $J\cap K\subseteq I$, then $J\subseteq I$ or $K\subseteq I$.\
$I$ is said to be \textit{primitive} if there exists a non-zero irreducible representation $\pi$ of $A$ such that $\ker(\pi)=I$.
\end{definition}

\begin{notation}[{see \cite[Definition 4.1.2]{Ped18}}]
We will denote by $\Prim(A)$ the \textit{primitive ideal space} of $A$, i.e., the set containing all primitive ideals of $A$.\
Given a subset $U\subseteq\Prim(A)$, the closure of $U$ is given by
\begin{equation*}
\overline{U}=\bigg\{\mathfrak{p}\in\Prim(A) \mid \mathfrak{p} \supseteq \bigcap_{\q\in U} \q \bigg\}.
\end{equation*}
The closure operation defined above satisfies Kuratowski's closure axioms, and thus defines a topology on $\Prim(A)$, which is usually called Jacobson (or hull-kernel) topology.\
Throughout the present article, we endow $\Prim(A)$ with the Jacobson topology.
\end{notation}

\begin{remark}\label{rem:prodprim}
By \cite[Propositions 2.16(iii)+2.17(2)]{BK04}, if $A$ and $B$ are \Cs-algebras, and at least one of them is exact, then the map given by
\begin{equation*}
\Prim(A)\times\Prim(B)\to\Prim(A\otimes B), \quad (\p,\q)\mapsto (\p\otimes B)+(A\otimes \q)
\end{equation*}
is a homeomorphism.
\end{remark}

\begin{definition}
Let $X$ be a topological space.
An open subset $P\subseteq X$ is said to be \textit{prime} if for every pair of open subsets $V,W\subseteq X$ such that $V\cap W\subseteq P$, it holds that $V\subseteq P$ or $W\subseteq P$.\
A prime set $P$ is \textit{proper} when $P\neq X$.
\end{definition}

\begin{definition}[{see \cite[Definition 2, p.\ 477]{MLM92}}]
A topological space $X$ is said to be \textit{sober} if for each proper prime open subset $P$ of $X$ there exists a unique point $x\in X$ such that $P=X \setminus \overline{\{x\}}$.
\end{definition}

\begin{remark}
First, note that any Hausdorff space is sober, and any sober space is $T_0$ (see \cite[Theorem 3, p.\ 477]{MLM92}).\
However, a sober space need not be $T_1$, and vice versa.
Moreover, as a consequence of \cite[Theorem 2, p.\ 479]{MLM92}, a sober space $X$ is determined up to homeomorphism by its lattice of open subsets $\mathcal{O}(X)$ in the sense that, if $Y$ is another sober space with $\mathcal{O}(Y)\cong\mathcal{O}(X)$, then $X$ is homeomorphic to $Y$.
\end{remark}

\begin{remark}\label{rem:isomideals}
Let $A$ be a \Cs-algebra.\
Recall from \cite[Theorem 4.1.3]{Ped18} that the lattice of open subsets of $\Prim(A)$ is order-isomorphic to $\I(A)$.\
In particular, an open subset $U\subseteq\Prim(A)$ is sent to the ideal $I\in\I(A)$ given by $I=\bigcap_{\p\in\Prim(A)\setminus U}\p$ via this order isomorphism, while an ideal $I\in\I(A)$ is sent to $\{\p \mid \p\nsupseteq I\}$ via its inverse.\
We know from \cite[Proposition 4.4.4]{Ped18} that $\Prim(A)$ is locally quasi-compact\footnote{We follow the convention that a locally compact space is Hausdorff, while a locally quasi-compact space need not be Hausdorff.}.\
Observe that, from what was said before, if $B$ is a \Cs-algebra, and $f:\Prim(A)\to\Prim(B)$ a homeomorphism, then there exists a unique order isomorphism between $\I(A)$ and $\I(B)$ that sends $\p\in\Prim(A)$ to $f(\p)\in\Prim(B)$.

If $A$ is separable, it follows from \cite[Proposition 4.3.6]{Ped18} that the set of prime ideals of $A$ coincides with the set of primitive ideals\footnote{Every primitive ideal is prime, but the converse is not true in general.}, which is equivalent to $\Prim(A)$ being a sober space by \cite[Proposition 4.4.14]{Ped18}.\
In this case, $\Prim(A)$ is also second-countable by \cite[Theorem 4.4.13]{Ped18}.
\end{remark}

\begin{definition}
Let $\alpha:G\curvearrowright A$ be an action on a $\Cs$-algebra.\
Denote by $\alpha^{\sharp}$ the set-theoretic action induced by $\alpha$ on the ideal lattice of $A$, i.e.,
\begin{equation*}
\alpha^{\sharp}:G\curvearrowright \I(A),\quad \alpha^{\sharp}_g(I)=\{\alpha_g(a) \mid a\in I\}
\end{equation*}
for every $g\in G$ and $I\in\I(A)$.

With slight abuse of notation, we also denote by $\alpha^{\sharp}$ the restriction of $\alpha^{\sharp}$ to the primitive ideal space of $A$.
\end{definition}

\begin{lemma}\label{lem:contaction}
Let $\alpha:G\curvearrowright A$ be an action on a $\Cs$-algebra.\
Then $\alpha^{\sharp}:G\curvearrowright\Prim(A)$ is continuous with respect to the Jacobson topology.
\end{lemma}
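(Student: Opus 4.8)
The plan is to verify joint continuity of the action map $m\colon G\times\Prim(A)\to\Prim(A)$, $m(g,\p)=\alpha^{\sharp}_g(\p)$, directly against the Jacobson topology, using the description of that topology through the lower semicontinuous norm functions attached to positive elements. Recall that for $a\in A_+$ the function $\p\mapsto\|a+\p\|$ (the norm in the quotient $A/\p$) is lower semicontinuous, since $\{\p\mid\|a+\p\|>\e\}=\{\p\mid(a-\e)_+\notin\p\}$ is a basic open set; moreover, as $a$ ranges over $A_+$ and $\e$ over $(0,\infty)$, these sets form a basis of the topology on $\Prim(A)$, because every standard basic open set $\{\p\mid a\notin\p\}$ is the union $\bigcup_{\e>0}\{\p\mid\|a+\p\|>\e\}$. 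Since preimages commute with unions, it then suffices to show that for each such $a$ and $\e$ the preimage $m^{-1}\big(\{\q\mid\|a+\q\|>\e\}\big)$ is open in $G\times\Prim(A)$.

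The first step is the elementary computation that, since each $\alpha_{g^{-1}}$ is an isometric automorphism,
\[
\|a+\alpha^{\sharp}_g(\p)\|=\inf_{x\in\p}\|a-\alpha_g(x)\|=\inf_{x\in\p}\|\alpha_{g^{-1}}(a)-x\|=\|\alpha_{g^{-1}}(a)+\p\|.
\]
Thus the preimage above is exactly the set where $F(g,\p):=\|\alpha_{g^{-1}}(a)+\p\|$ exceeds $\e$, and the whole problem reduces to proving that $F$ is lower semicontinuous on $G\times\Prim(A)$.

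For this I would fix $(g_0,\p_0)$ with $F(g_0,\p_0)>\e$ and exhibit a product neighbourhood on which $F>\e$, exploiting three facts: the map $g\mapsto\alpha_{g^{-1}}(a)$ is norm-continuous (continuity of $\alpha$ together with continuity of inversion in $G$); the estimate $\big|\,\|b+\p\|-\|b'+\p\|\,\big|\le\|b-b'\|$ holds uniformly in $\p$; and $\p\mapsto\|b+\p\|$ is lower semicontinuous for each fixed $b$. Writing $b_0=\alpha_{g_0^{-1}}(a)$ and choosing $\delta>0$ with $\|b_0+\p_0\|>\e+2\delta$, I would first pick a neighbourhood $W\ni g_0$ with $\|\alpha_{g^{-1}}(a)-b_0\|<\delta$ for $g\in W$, and then use lower semicontinuity of $\p\mapsto\|b_0+\p\|$ to pick an open $U\ni\p_0$ on which $\|b_0+\p\|>\e+\delta$. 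The Lipschitz estimate then gives $F(g,\p)\ge\|b_0+\p\|-\delta>\e$ for all $(g,\p)\in W\times U$, proving lower semicontinuity and hence the lemma.

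The only genuine subtlety is that $\Prim(A)$ is in general far from Hausdorff, so one cannot argue with limits of nets or separation properties; the point of routing everything through the functions $\p\mapsto\|a+\p\|$ is precisely that these encode the Jacobson topology faithfully while transforming predictably under the automorphisms $\alpha_g$, turning the joint-continuity question into a clean semicontinuity estimate. The one place requiring care is the interplay between the norm-continuous $g$-dependence and the merely lower semicontinuous $\p$-dependence, which is exactly what the uniform Lipschitz bound is there to reconcile.
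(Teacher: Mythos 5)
Your proof is correct, and it takes a genuinely different route from the paper's. The paper argues on complements: it fixes the ideal $I$ corresponding to an open set $U$, and proves that $W=\{(g,\p)\in G\times\Prim(A) \mid \alpha^{\sharp}_g(\p)\supseteq I\}$ is closed by taking a convergent net $(g_\lambda,\p_\lambda)\to(g,\p)$ in $W$, using that $\p_\lambda\to\p$ forces $\bigcap_\lambda\p_\lambda\subseteq\p$, and controlling the group direction with Cuntz subequivalence: if $\alpha_h(e)=_{\e}e$ then $(e-\e)_+\precsim\alpha_h(e)$, hence $(e-\e)_+\in\bigcap_{h\in H}\alpha_h(I)$ for a small neighbourhood $H$ of $1_G$, after passing to a subnet with $gg_\lambda^{-1}\in H$. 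You instead route everything through the quotient-norm functions $\p\mapsto\|a+\p\|$, reducing joint continuity to joint lower semicontinuity of $(g,\p)\mapsto\|\alpha_{g^{-1}}(a)+\p\|$, which you obtain from norm-continuity in $g$, the uniform Lipschitz bound $\bigl|\,\|b+\p\|-\|b'+\p\|\,\bigr|\le\|b-b'\|$, and lower semicontinuity in $\p$ alone. Both proofs ultimately exploit the same mechanism of converting approximate statements into exact ideal membership via $(a-\e)_+$ --- the paper through the Kirchberg--R{\o}rdam Cuntz-subequivalence lemma, you through the identity $\{\p \mid \|a+\p\|>\e\}=\{\p \mid (a-\e)_+\notin\p\}$ --- but your argument avoids nets and Cuntz comparison entirely, needing only functional calculus and the triangle inequality for quotient norms, so it is more elementary and self-contained; the paper's net argument has the advantage of working directly with the closure description of the Jacobson topology and of reusing a comparison lemma already in its toolkit.
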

\begin{proof}
We want to prove that, for any given open set $U\subseteq\Prim(A)$, the set given by
\begin{equation*}
\{(g,\p)\in G\times\Prim(A) \mid \alpha^{\sharp}_g(\p)\in U\}
\end{equation*}
is open in $G\times\Prim(A)$ endowed with the product topology.\
Equivalently, one can show that the set
\begin{equation*}
W = \{(g,\p)\in G\times\Prim(A) \mid \alpha^{\sharp}_g(\p)\in \Prim(A)\setminus U\}
\end{equation*}
is closed.\
Let us verify that this is true.

First of all, note that from Remark \ref{rem:isomideals} there exists a unique ideal $I\in\I(A)$ corresponding to $U$, and $W$ can be written as
\begin{equation*}
W = \{(g,\p)\in G\times\Prim(A) \mid \alpha^{\sharp}_g(\p)\supseteq I\}.
\end{equation*}
Consider a net $\{(g_{\lambda},\p_{\lambda})\}_{\lambda\in\Lambda}$ in $W$ converging to an element $(g,\p)\in G\times\Prim(A)$.\
In particular, it follows that $g_{\lambda}\to g$, and $\p_{\lambda}\to\p$.\
Let $e\in I$ be a positive element.\
By continuity of $\alpha$, we have that for any $\epsilon>0$ there exists an open neighbourhood $H$ around $1_G$ such that $\alpha_h(e)=_{\e}e$ for all $h\in H$.\
It follows from Remark \ref{rem:RordamLemma}(i) that $(e-\e)_+$ is Cuntz subequivalent to $\alpha_h(e)$ for all $h\in H$.\
Hence, we have that $(e-\e)_+ \in \bigcap_{h\in H}\alpha_h(I)$.\
By passing to a subnet, we may assume that $gg_{\lambda}^{-1}\in H$ for all $\lambda\in\Lambda$.

Now, since $\p_{\lambda}\to\p$, it is certainly true that $\p$ belongs to the closure of $\{\p_{\lambda} \mid \lambda\in\Lambda\}$, which means that $\bigcap_{\lambda\in\Lambda}\p_\lambda \subseteq \p$.\
Note moreover that $(g_\lambda,\p_\lambda)\in W$ means that $\alpha_{g_{\lambda}}^{\sharp}(\p_\lambda)\supseteq I$, or equivalently, $\p_{\lambda}\supseteq(\alpha^{\sharp}_{g_{\lambda}})^{-1}(I)$ for all $\lambda\in\Lambda$.\
Hence, we obtain that
\begin{equation*}
\alpha^{\sharp}_{g}(\p) \supseteq \bigcap_{\lambda\in\Lambda}\alpha^{\sharp}_{g}(\p_\lambda) \supseteq \bigcap_{\lambda\in\Lambda}\alpha^{\sharp}_{gg_{\lambda}^{-1}}(I) \ni (e-\e)_+.
\end{equation*}
Since $\e$ was arbitrarily chosen at the beginning, we get that $e\in \alpha^{\sharp}_g(\p)$.\
This shows that $I_+ \subseteq \alpha^{\sharp}_g(\p)$, and therefore we get that $I \subseteq \alpha^{\sharp}_g(\p)$, and $(g,\p) \in W$.
\end{proof}

\begin{remark}
Observe that for every cocycle morphism $(\f,\bbu):(A,\alpha) \to (B,\beta)$, the $\Cu$-morphism $\I(\f): \I(A)\to\I(B)$ is equivariant with respect to $\alpha^{\sharp}$ and $\beta^{\sharp}$.\
This follows from the fact that unitarily equivalent $\ast$-homomorphisms induce the same $\Cu$-morphism.\
As one may expect, $\I_\sigma(-)$ (respectively, $\I(-)$) becomes a functor from the category of (separable) $G$-\Cs-algebras and proper cocycle morphisms as arrows to the category of abstract Cuntz semigroups endowed with order-theoretic $G$-actions and arrows given by $G$-equivariant $\Cu$-morphisms.
\end{remark}

\begin{remark}\label{rem:invariants}
We record here an observation that will be used in Theorem \ref{thm:classification}.\
Let $\alpha:G\curvearrowright A$ and $\beta:G\curvearrowright B$ be actions on \Cs-algebras.\
Let $f:(\Prim(A),\alpha^{\sharp})\to(\Prim(B),\beta^{\sharp})$ be a conjugacy.\
As a consequence of Remark \ref{rem:isomideals}, $f$ induces a unique order isomorphism $\Phi:\I(A)\to\I(B)$ such that $\Phi(\p)=f(\p)$ for all $\p\in\Prim(A)$.\
Moreover, $\Phi$ is $\alpha^{\sharp}$-to-$\beta^{\sharp}$ equivariant because
\begin{align*}
\beta^{\sharp}_g \circ \Phi(I)&=\beta^{\sharp}_g \circ \Phi\bigg(\bigcap\{\p\in\Prim(A) \mid \p\supseteq I\}\bigg)=\bigcap\{\beta^{\sharp}_g \circ f(\p)\in\Prim(A) \mid \p\supseteq I\}\\
&=\bigcap\{f \circ \alpha^{\sharp}_g(\p)\in\Prim(A) \mid \p\supseteq I\}=\Phi \circ \alpha^{\sharp}_g(I)
\end{align*}
for all $g\in G$ and $I\in\I(A)$.
\end{remark}

\section{Uniqueness results}\label{sec:uniqueness}

In this section, we work our way towards the uniqueness result underpinning our classification theorem.\
Although the uniqueness and existence results contribute equally to the overarching structure of the present work, we should point out to the reader that, on the methodological level, the technical novelty is contained in Section \ref{sec:existence}.

\begin{definition}[see {\cite[Definition 3.3]{GS22a}}]\label{def:wc}
Let $(\f,\bbu),(\psi,\bbv):(A,\alpha)\to(B,\beta)$ be two cocycle morphisms.\
We say that $(\f,\bbu)$ \textit{approximately 1-dominates} $(\psi,\bbv)$, if for every compact subset $K\subseteq G$, finite subset $\mathcal{F}\subset A$, $\e>0$, and contraction $b\in B$, there exists $c\in B$ such that
\begin{equation}\tag{e1}\label{eq:e1}
\max_{g\in K} \| b^*\bbv_g\beta_g(b) - c^*\bbu_g\beta_g(c) \| \leq \e,
\end{equation}
and
\begin{equation}\tag{e2}\label{eq:e2}
\max_{a\in\mathcal{F}} \| b^*\psi(a)b - c^* \f(a) c \| \leq \e.
\end{equation}
\end{definition}

\begin{remark}\label{rem:contraction}
Note that, in Definition \ref{def:wc}, one may always choose $c$ as a contraction:\
Let a quadruple $(K,\mathcal{F},\e, b)$ as above be given.
Choose $\eta>0$ such that $(1+\max_{a\in\mathcal F}\|a\|)\eta\leq\e/2$.
If $c\in B$ is chosen to satisfy conditions \eqref{eq:e1} and \eqref{eq:e2} for the quadruple $(K\cup\{1_G\},\mathcal{F},\eta, b)$, then the first condition at $g=1_G$ entails that $\|c\|^2 \leq \|b^*b\|+\|b^*b-c^*c\|\leq 1+\eta$.\
Then $c_0=(1+\eta)^{-1/2} c\in B$ is a contraction satisfying
\[
 \| b^*\bbv_g\beta_g(b) - c_0^*\bbu_g\beta_g(c_0) \| \leq \eta+ \| c_0^*\bbu_g\beta_g(c_0) ((1+\eta)-1)  \| \leq \e/2+\e/2=\e
\]
for all $g\in K$, and
\[
\| b^*\psi(a)b - c_0^* \f(a) c_0 \| \leq \eta+ \| c_0^* \f(a) c_0 ((1+\eta)-1) \| \leq \e/2+\e/2=\e
\]
for all $a\in\mathcal F$.
\end{remark}

\begin{notation}
Let $(\f,\bbu),(\psi,\bbv):(A,\alpha)\to(B,\beta)$ be two cocycle morphisms.\
We say that \textit{$\f$ approximately 1-dominates $\psi$ as ordinary $\ast$-homomorphisms} if $(\f,\1)$ approximately 1-dominates $(\psi,\1)$ in the sense of Definition \ref{def:wc} when viewed as proper cocycle morphisms with respect to the action of $G=\{1\}$.
\end{notation}

\begin{remark}\label{rem:app-equiv}
Let $A$, $B$ be \Cs-algebras, and $\f,\psi:A\to B$ two $\ast$-homomorphisms.\
Suppose that for every finite subset $\mathcal{F}\subseteq A$, $\e>0$, and contraction $b\in B$, there exists $c\in B$ such that
\begin{equation*}
\max_{a\in\mathcal{F}} \| b^*\psi(a)b - c^* \f(a) c \| \leq \e.
\end{equation*}
Then it follows that $\I(\psi)\leq\I(\f)$.
Let us show why.
Fix an ideal $I\in\I(A)$.
For any element $a\in I$ and $\e>0$, we may find a positive contraction $b\in B$ such that $\psi(a) =_{\e} b^*\psi(a)b$.
By assumption, for $a,\e$ and $b$ given as above, there exists $c\in B$ such that
\begin{equation*}
  \psi(a) =_{\e} b^*\psi(a)b =_{\e} c^* \f(a) c \in \overline{B\f(I)B}.
\end{equation*}
Since $a\in I$ and $\e>0$ were arbitrary, it follows that $\psi(I)\subseteq\I(\f)(I)$.
\end{remark}

\begin{lemma}\label{lem:inductivelim}
Let $\alpha:G\curvearrowright A$ and $\beta:G\curvearrowright B$ be actions on \Cs-algebras.\
Let $\f,\psi:(A,\alpha)\to(B,\beta)$ be equivariant $\ast$-homomorphisms.\
Then the following statements hold true.
\begin{enumerate}[label=\textup{(\roman*)},leftmargin=*]
\item Assume that $A$ is separable.\
If $(\f,\1)$ approximately 1-dominates $(\psi,\1)$, there exists a separable, $\beta$-invariant \Cs-subalgebra $D\subseteq B$ such that $(\f,\1)$ approximately 1-dominates $(\psi,\1)$ when corestricted to $D$.\label{item:first}
\item Let $\kappa:(B,\beta)\hookrightarrow (B\otimes\K,\beta\otimes\id_{\K})$ be the equivariant inclusion given by $\kappa(b)=b\otimes e_{1,1}$.\
Then, if $(\f,\1)$ approximately 1-dominates $(\psi,\1)$, $\kappa \circ (\f,\1)$ approximately 1-dominates $\kappa \circ (\psi,\1)$.\label{item:second}
\item $(\f,\1)$ approximately 1-dominates $(\psi,\1)$ if and only if $\iota_\infty \circ (\f,\1)$ approximately 1-dominates $\iota_\infty \circ (\psi,\1)$.\footnote{Here, $\iota_\infty$ must be viewed as a map into the $\beta$-continuous sequence algebra $B_{\infty,\beta}$.}\label{item:third}
\end{enumerate}
\end{lemma}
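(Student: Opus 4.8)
The plan is to prove the three statements more or less in the order stated, since (i) is the substantial one and (ii), (iii) are comparatively routine consequences of the definition of approximate 1-domination. Throughout I work with the condition in Definition \ref{def:wc}, which quantifies over compact $K\subseteq G$, finite $\mathcal{F}\subset A$, $\e>0$, and a contraction $b\in B$, producing a witness $c\in B$ satisfying \eqref{eq:e1} and \eqref{eq:e2}.

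\textbf{Statement (i): a separability/inductive-limit argument.} The idea is the standard $\sigma$-unital exhaustion: build a separable $\beta$-invariant subalgebra $D\subseteq B$ that is large enough to contain witnesses for a countable dense set of data. First I would fix a countable dense subset $\mathcal{F}_0\subseteq A$ (using separability of $A$), a countable dense subset of compact sets in $G$ (using second countability of $G$, e.g.\ an exhaustion $K_1\subseteq K_2\subseteq\cdots$ of $G$ by compacts), and a sequence $\e_n\downarrow 0$. Then I would construct $D$ as the closure of an increasing union $\bigcup_n D_n$ of separable subalgebras, where at each stage, for each finite collection of data drawn from a countable reservoir of contractions $b\in D_{n-1}$ together with the chosen $\mathcal{F}_0$, $K_n$, $\e_n$, I adjoin the corresponding witnesses $c$ guaranteed by the hypothesis, together with their images under $\f$, $\psi$, the cocycles $\bbu_g=\bbv_g=\1$ (trivial here), and crucially the $\beta$-orbit so as to keep $D$ $\beta$-invariant. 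Ensuring $\beta$-invariance while staying separable uses that $\beta$ is a continuous action of a second-countable group: it suffices to adjoin $\beta_g(x)$ for $g$ ranging over a countable dense subset of $G$ and then close, since continuity and density propagate invariance to all of $G$. The verification that $(\f,\1)$ approximately 1-dominates $(\psi,\1)$ when corestricted to $D$ is then a density argument: given arbitrary data $(K,\mathcal{F},\e,b)$ with $b\in D$ a contraction, approximate $b$ by a contraction from the reservoir, $\mathcal{F}$ by elements of $\mathcal{F}_0$, and $K$ by some $K_n$, and use that the relevant expressions $b^*\psi(a)b-c^*\f(a)c$ and $b^*\beta_g(b)-c^*\beta_g(c)$ depend norm-continuously on $b$ (uniformly, since everything is contractive) to pass to the limit with a witness $c\in D$.

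\textbf{Statements (ii) and (iii): transport along canonical maps.} For (ii), I would observe that the amplification $\kappa(b)=b\otimes e_{1,1}$ is compatible with the relevant products: for a contraction $B\otimes\K\ni b'$, one can reduce to checking the domination condition on elements of the algebraic tensor product and then, by a corner/cutting argument, transfer the witness. More precisely, given a contraction $b'\in B\otimes\K$, finite $\mathcal{F}\subset A$ and $\e>0$, one approximates $b'$ by a finite sum $\sum_{i,j} b_{ij}\otimes e_{ij}$ and then assembles a witness in $B\otimes\K$ from the witnesses in $B$ provided by the domination of $(\f,\1)$ over $(\psi,\1)$, using that $\kappa\circ\f=(\f\otimes\id_\K)\circ\kappa$ and that $\id_\K$ contributes only matrix bookkeeping. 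For (iii), the ``only if'' direction is immediate: $\iota_\infty$ is an equivariant $\ast$-homomorphism into $B_{\infty,\beta}$, and a witness $c\in B$ maps to $\iota_\infty(c)$, preserving \eqref{eq:e1}--\eqref{eq:e2} since $\iota_\infty$ is isometric. The ``if'' direction is the point of interest: a witness $c\in B_{\infty,\beta}$ is represented by a bounded sequence $(c_n)_n$, and by evaluating the (finitely many) norm estimates \eqref{eq:e1}--\eqref{eq:e2} in the quotient $B_\infty$ and choosing $n$ large, one extracts a genuine witness $c_n\in B$ satisfying the same inequalities up to an arbitrarily small worsening of $\e$; since $\e$ is arbitrary this recovers domination in $B$.

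\textbf{Expected main obstacle.} The only genuinely delicate point is in (i): simultaneously keeping $D$ separable and $\beta$-invariant while absorbing witnesses. The resolution is the usual back-and-forth bookkeeping, alternating between ``add witnesses for the current finite data'' and ``add a countable dense set of $\beta$-translates,'' iterating countably often; second countability of $G$ and continuity of $\beta$ guarantee that the resulting separable closure is genuinely invariant. I expect (ii) and (iii) to be essentially formal once (i) is in hand.
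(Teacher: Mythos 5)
Your statement (i) is handled correctly, and essentially by the paper's own argument: an inductive exhaustion by separable $\beta$-invariant subalgebras with countable bookkeeping of data, using second countability of $G$ for invariance and uniform continuity of $b\mapsto b^*\psi(a)b$, $b\mapsto b^*\beta_g(b)$ to transfer witnesses by density (the paper stores witnesses for approximate units of each stage and uses $cb_n$ as the final witness, whereas you store witnesses for dense sets of contractions; both work). The genuine problems are in (ii) and (iii), which you treat as formal; they are not, and in both cases the missing point is the same: approximate $1$-domination is a statement about compressions $b^*\psi(a)b$ by a \emph{single} element, with a witness chosen for that element alone, and it gives no control over mixed terms $b_1^*\psi(a)b_2$ with independently chosen witnesses. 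For (ii), if $d=\sum_{i,j\le N}b_{ij}\otimes e_{ij}$ is a matrix contraction, then $d^*(\psi(a)\otimes e_{1,1})d=\sum_{j,l}b_{1j}^*\psi(a)b_{1l}\otimes e_{jl}$, so an assembled witness $c'=\sum_m c_m\otimes e_{1m}$ would need $c_j^*\f(a)c_l\approx b_{1j}^*\psi(a)b_{1l}$ for all pairs $(j,l)$; the hypothesis yields only the diagonal terms $j=l$, each with its own unrelated witness, and polarization does not repair this (it writes a cross term as a combination of diagonal compressions of four different elements, not as a single cross term of two fixed witnesses). So your entrywise assembly fails. The paper never decomposes $d$: it picks contractions $b\in B$, $e\in\K$ with $(b\otimes e)d=_{\e}d$, takes a witness $c$ for the single contraction $b$, and uses $(c\otimes e)d$ as the witness for $d$; keeping $d$ as an outer factor means the one estimate $\|b^*\psi(a)b-c^*\f(a)c\|\leq\e$ is simply sandwiched between $d^*$ and $d$, so all cross terms are handled at once. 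Your sketch contains no equivalent of this idea.

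For (iii), the ``only if'' direction is not immediate, and your justification misreads the quantifiers: approximate $1$-domination of $\iota_\infty\circ(\psi,\1)$ by $\iota_\infty\circ(\f,\1)$ demands a witness in $B_{\infty,\beta}$ for \emph{every} contraction of $B_{\infty,\beta}$, not merely for contractions of the form $\iota_\infty(b)$, so pushing witnesses forward along the isometry $\iota_\infty$ proves nothing. The actual difficulty is that applying the hypothesis componentwise to a representing sequence $(b_n)$ produces witnesses $c_n\in B$ with no control on their $\beta$-orbit maps, so the element of $B_\infty$ they define need not lie in the $\beta$-continuous part $B_{\infty,\beta}$. This is exactly where the paper works hardest: it enlarges $K$ to contain a neighbourhood $U$ of $1_G$ on which $\beta_{\infty,s}(b)=_{\e}b$, and then uses the cocycle estimate \eqref{eq:e1} to show $\|\beta_{\infty,g}(c)-\beta_{\infty,h}(c)\|^2\leq 6\e$ for $h\in gU$, i.e.\ the witness inherits approximate equicontinuity from $b$, which is what places it (up to a negligible correction) in $B_{\infty,\beta}$. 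Your ``if'' direction is the right idea, but note that \eqref{eq:e1} is a maximum over the compact set $K$, not finitely many estimates; to extract a single good index $n$ one must choose the representing sequence of the witness inside $\ell^\infty_\beta(\N,B)$ so that $\max_{g\in K}$ and $\limsup_n$ can be interchanged by equicontinuity.
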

\begin{proof}
We start by showing \ref{item:first}.\
Denote by $D_0$ the separable, $\beta$-invariant \Cs-subalgebra of $B$ generated by the union of the image of $\f$ and $\psi$.\
Fix an increasing sequence of finite subsets $\mathcal{F}_n\subseteq A$ such that $\overline{\bigcup_{n\in\N}\mathcal{F}_n} = A$, and an increasing sequence of compact subsets $K_n\subseteq G$ such that $\bigcup_{n\in\N}K_n^{\circ}=G$.\
Choose a countable approximate unit of contractions $(d^{(0)}_i)_{i\in\N}\subseteq D_0$ for $D_0$.\
By assumption, there exists a contraction $c^{(0)}_i\in B$ (cf.\ Remark \ref{rem:contraction}) such that
\begin{align*}
\max_{g\in K_0}\|(d^{(0)}_i)^*\beta_g(d^{(0)}_i) - (c^{(0)}_i)^*\beta_g(c^{(0)}_i)\|&\leq 1, \\
\max_{a\in\mathcal{F}_0}\|(d^{(0)}_i)^*\psi(a)d^{(0)}_i - (c^{(0)}_i)^*\f(a)c^{(0)}_i\|&\leq 1
\end{align*}
for all $i\in\N$.
Let $D_1$ be the separable, $\beta$-invariant \Cs-subalgebra of $B$ generated by $D_0$ and $(c^{(0)}_i)_{i\in\N}$.\
Inductively define $D_{n+1}\subseteq B$ for $n\geq 1$ in the following way.\
Suppose $D_n\subseteq B$ is a separable, $\beta$-invariant \Cs-subalgebra of $B$ containing $D_{n-1}$, and pick a countable approximate unit of contractions $(d^{(n)}_i)_{i\in\N}\subseteq D_n$ for $D_n$.\
By assumption, there exists a contraction $c^{(n)}_i\in B$ that satisfies
\begin{align*}
\max_{g\in K_n}\|(d^{(n)}_i)^*\beta_g(d^{(n)}_i) - (c^{(n)}_i)^*\beta_g(c^{(n)}_i)\|&\leq 2^{-n}, \\
\max_{a\in\mathcal{F}_n}\|(d^{(n)}_i)^*\psi(a)d^{(n)}_i - (c^{(n)}_i)^*\f(a)c^{(n)}_i\|&\leq 2^{-n}
\end{align*}
for all $i\in\N$.\
Let $D_{n+1}$ be the separable, $\beta$-invariant \Cs-subalgebra of $B$ generated by $D_n$ and $(c^{(n)}_i)_{i\in\N}$.\
Finally, set $D=\overline{\bigcup_{n\in\N}D_n}$.\
Let us check that $D$ is the algebra we are looking for.\
Fix a finite subset $\mathcal{F}\subseteq A$, a compact subset $K\subseteq G$, $\e>0$ and a contraction $b\in D$.\
If we choose $n\in\N$ large enough, we have that $2^{-n}\leq\e$, $K\subseteq K_n^{\circ}$, and there exists some $k_n\in\N$ such that $d^{(n)}_{k_n} b =_{\e} b$.\
Since the increasing union of the $\mathcal{F}_n$ was dense in $A$, we may assume (upon making $n$ larger if necessary) that for every $x\in\mathcal F$ there is $a\in\mathcal{F}_n$ with $x=_\e a$.
By construction, the contraction $c=c^{(n)}_{k_n}\in D$ satisfies
\begin{align*}
\max_{g\in K_n}\|d^{(n)*}_{k_n} \beta_g(d^{(n)}_{k_n}) - c^*\beta_g(c)\|&\leq \e, \\
\max_{a\in\mathcal{F}_n}\|d^{(n)*}_{k_n}\psi(a)d^{(n)}_{k_n} - c^*\f(a)c\|&\leq \e.
\end{align*}
Therefore, we end up with
\[
\max_{g\in K}\|b^*\beta_g(b) - (cb)^*\beta_g(cb)\| \leq 2\e + \max_{g\in K}\|b^*(d^{(n)*}_{k_n} \beta_g(d^{(n)}_{k_n}) - c^*\beta_g(c)) \beta_g(b)\| \leq 3\e
\]
and
\[
\begin{array}{cl}
\multicolumn{2}{l}{ \displaystyle \max_{a\in\mathcal{F}} \|b^*\psi(a)b - (cb)^*\f(a)cb\| } \\
\leq& \displaystyle 2\e+\max_{a\in\mathcal{F}_n}\|b^*\psi(a)b - (cb)^*\f(a)cb\| \\
\leq& \displaystyle 4\e+\max_{a\in\mathcal{F}_n}\|b^*( d^{(n)*}_{k_n}\psi(a)d^{(n)}_{k_n} - c^*\f(a)c )b\| \ \leq \ 5\e.
\end{array}
\]
Up to rescaling $\e$, this shows that $(\f,\1)$ approximately 1-dominates $(\psi,\1)$ when corestricted to $D$.

Now we show \ref{item:second}.\
Fix $\e>0$, a finite subset $\mathcal{F}\subseteq A$, a compact subset $K\subseteq G$, and a contraction $d\in B\otimes\K$.\
Let $b\in B$ and $e\in\K$ be contractions such that $(b\otimes e)d=_{\e}d$ and $e^* e_{1,1} e=e_{1,1}$.\
Since $(\f,\1)$ approximately 1-dominates $(\psi,\1)$, there exists a contraction $c\in B$ such that
\begin{align*}
\max_{g\in K}\|b^*\beta_g(b) - c^*\beta_g(c)\|&\leq \e, \\
\max_{a\in\mathcal{F}}\|b^*\psi(a)b - c^*\f(a)c\|&\leq \e.
\end{align*}
It follows that
\begin{align*}
\max_{g\in K}\|d^*(\beta\otimes\id_{\K})_g(d) - d^*(c\otimes e)^*(\beta\otimes\id_{\K})_g((c\otimes e)d)\|\leq 3\e,
\end{align*}
and
\begin{align*}
\max_{a\in\mathcal{F}}\|d^*(\psi(a)\otimes e_{1,1})d - d^*(c\otimes e)^*(\f(a)\otimes e_{1,1})(c\otimes e)d\| \leq \e+2\e\max_{a\in\mathcal{F}}\|a\|.
\end{align*}
Up to suitably rescaling $\e$, this shows that $\kappa \circ (\f,\1)$ approximately 1-dominates $\kappa \circ (\psi,\1)$.

To show \ref{item:third}, assume that $(\f,\1)$ approximately 1-dominates $(\psi,\1)$.\
Let $b\in B_{\infty,\beta}$ be a contraction, which we may represent by a sequence of contractions $(b_n)_{n\in\N} \in \ell^\infty_{\beta}(\N,B)$.
Choose an increasing sequence of finite subsets $\mathcal{F}_n\subseteq A$ with dense union and increasing compact subsets $K_n\subseteq G$ with $G=\bigcup_{n\geq 1} K_n^\circ$.
For every $n\geq 1$, we may find a contraction $c_n\in B$ satisfying
\begin{align*}
\max_{g\in K_n}\| b_n^*\beta_{g}(b_n) - c_n^*\beta_{g}(c_n)\| & \leq 2^{-n},\\
\max_{a\in\mathcal{F}_n }\|b_n^*\psi(a)b_n-c_n^*\f(a)c_n\| & \leq 2^{-n}.
\end{align*}
Denote by $c\in B_{\infty}$ the element represented by the sequence $(c_n)_n$.\
Then clearly $b^*\beta_{\infty,g}(b)=c^*\beta_{\infty,g}(c)$ and $b^*\psi(a)b=c^*\f(a)c$ for all $g\in G$ and $a\in A$.
We observe for every $g\in G$ that
\begin{align*}
\|\beta_{\infty,g}(c)-c\|^2 & =\|\beta_{\infty,g}(c^*c)+c^*c-\beta_{\infty,g}(c)^*c-c^*\beta_{\infty,g}(c)\| \\
&= \|\beta_{\infty,g}(b^*b)+b^*b-\beta_{\infty,g}(b)^*b-b^*\beta_{\infty,g}(b)\| \\
&= \|\beta_{\infty,g}(b)-b\|^2.
\end{align*}
It follows that $c\in B_{\infty,\beta}$, and $\iota_\infty \circ (\f,\1)$ approximately 1-dominates $\iota_\infty \circ (\psi,\1)$.

For the other direction, suppose that $\iota_\infty \circ (\f,\1)$ approximately 1-dominates $\iota_\infty \circ (\psi,\1)$.\
It follows that for any contraction $b\in B$, finite subset $\mathcal{F}\subseteq A$, compact subset $K\subseteq G$, and $\e>0$, we may find $c\in B_{\infty,\beta}$ with representing sequence $(c_n)_{n\in\N}\in\ell^\infty_\beta(\N,B)$, such that
\begin{align*}
\max_{g\in K}\ \limsup_{n\to\infty}\|b^*\beta_g(b)-c_n^*\beta_g(c_n)\| & \leq \frac{\e}{2}, \\
\max_{a\in\mathcal{F}}\ \limsup_{n\to\infty}\|b^*\psi(a)b-c_n^*\f(a)c_n\| & \leq \frac{\e}{2}.
\end{align*}
Hence, one may find a large enough $n\in\N$ such that
\begin{align*}
\max_{g\in K}\|b^*\beta_g(b)-c_n^*\beta_g(c_n)\| & \leq \e, \\
\max_{a\in\mathcal{F}}\|b^*\psi(a)b-c_n^*\f(a)c_n\| & \leq \e,
\end{align*}
and therefore $(\f,\1)$ approximately 1-dominates $(\psi,\1)$.
\end{proof}

\begin{theorem}\label{thm:weak-cont}
Let $A$ be an exact \Cs-algebra, and $B$ a stable, $\Oinf$-stable \Cs-algebra.\
If $\f,\psi:A\to B_{\infty}$ are nuclear $\ast$-homomorphisms such that  $\I(\f)\geq\I(\psi)$, then $\f$ approximately 1-dominates $\psi$.
\end{theorem}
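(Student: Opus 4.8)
The plan is to turn the problem into a Cuntz comparison, using nuclearity to pass through a finite-dimensional model of $\psi$ and $\Oinf$-stability to reproduce that model as a compression of $\f$. First I would unpack the conclusion. Reading Definition~\ref{def:wc} with $G=\{1\}$ and the two cocycles equal to $\1$, the assertion ``$\f$ approximately $1$-dominates $\psi$'' says that for every finite $\mathcal F\subseteq A$, every $\varepsilon>0$, and every contraction $b\in B_\infty$ there is $c\in B_\infty$ with $\|b^*b-c^*c\|\le\varepsilon$ and $\max_{a\in\mathcal F}\|b^*\psi(a)b-c^*\f(a)c\|\le\varepsilon$. Passing to unitizations and adjoining $\1$ to $\mathcal F$, this is exactly the statement that the completely positive map $b^*\psi^\dagger(-)b$ is approximated on a finite set by a compression $c^*\f^\dagger(-)c$ of the $\ast$-homomorphism $\f^\dagger\colon A^\dagger\to B_\infty^\dagger$ (the $a=\1$ coordinate encoding $\|b^*b-c^*c\|\le\varepsilon$). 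The standing observation is that, being $\Oinf$-stable, $B$ is strongly purely infinite, and this passes to the sequence algebra, so that in $B_\infty$ Cuntz subequivalence of positive elements coincides with containment of the ideals they generate (Remark~\ref{rem:Cu(A)}).

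With this in hand I would record the engine in the one-element case. For $a\in A_+$ the element $b^*\psi(a)b$ generates an ideal contained in $\overline{B_\infty\psi(a)B_\infty}=\I(\psi)(\overline{AaA})$, which by hypothesis lies inside $\I(\f)(\overline{AaA})=\overline{B_\infty\f(a)B_\infty}$, the ideal generated by $\f(a)$; pure infiniteness then yields $b^*\psi(a)b\precsim\f(a)$, i.e. a single $a$ can always be matched. The entire difficulty is that one $c$ must serve all of $\mathcal F\cup\{\1\}$ at once, and this is where nuclearity of $\psi$ is indispensable: for non-nuclear $\psi$ the statement is false, while Remark~\ref{rem:app-equiv} already supplies the converse implication.

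Next I would invoke the completely positive approximation property of the nuclear map $\psi^\dagger$ to replace, up to $\varepsilon$ on $\mathcal F\cup\{\1\}$, the map $b^*\psi^\dagger(-)b$ by a finite-dimensional model. Factoring $\psi^\dagger$ through contractive completely positive maps $A^\dagger\xrightarrow{S}M_n\xrightarrow{T}B_\infty$ and dilating $S$ through a finite-dimensional approximate representation, one reaches an approximation $b^*\psi^\dagger(a)b\approx\sum_{k,l}\pi(a)_{kl}\,v_k^*v_l$, where $\pi$ is a finite-dimensional (approximate) representation of $A^\dagger$, the $v_k$ lie in $B_\infty$, and $\pi(a)_{kl}\in\mathbb C$ are its matrix coefficients. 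Although $\pi$ itself may detect more ideals than $\psi$, after cutting $\pi$ down to the corner visible to the $v_k$ the generated ideals of this model are controlled by $\I(\psi)\le\I(\f)$, so the model inherits the ideal-support bound needed below.

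The heart of the argument, and the step I expect to be the main obstacle, is to reproduce this finite-dimensional model as a genuine compression $c^*\f^\dagger(-)c$ of $\f^\dagger$. Since $\f^\dagger$ is a $\ast$-homomorphism into the strongly purely infinite, $\Oinf$-stable algebra $B_\infty$ whose ideal support dominates that of the model, one expects $\f^\dagger$ to \emph{absorb} the (cut-down) representation $\pi$ in the manner of a Voiculescu-type theorem adapted to the non-simple, ideal-related setting: using a unital copy of $\Oinf$ in a relative-commutant sequence algebra provided by $\Oinf$-stability, one produces elements $d_1,\dots,d_n\in B_\infty$ with $d_k^*\f^\dagger(a)d_l\approx\pi(a)_{kl}\,p$ for a suitable positive contraction $p$ with $pv_k\approx v_k$, the existence of the $d_k$ being precisely where the ideal hypothesis and the pure-infiniteness Cuntz comparison are consumed. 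Setting $c:=\sum_k d_kv_k$ then gives $c^*\f^\dagger(a)c\approx\sum_{k,l}\pi(a)_{kl}v_k^*v_l\approx b^*\psi^\dagger(a)b$ on $\mathcal F\cup\{\1\}$, which is the desired approximate $1$-domination. The delicate part is the bookkeeping tying the matrix model to $\f$ through a single $c$, uniformly over $\mathcal F$ and compatibly with the ideal support; throughout one may pass to separable subalgebras of $B$ as in Lemma~\ref{lem:inductivelim}\ref{item:first} so that the central-sequence copy of $\Oinf$ and the comparison elements are genuinely available.
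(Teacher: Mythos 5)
Your reduction of the statement (unitize, adjoin $\1$ to $\mathcal{F}$, so that condition \eqref{eq:e1} becomes the $a=\1$ instance of \eqref{eq:e2}) and your final assembly $c:=\sum_k d_kv_k$ are both fine, but the step you yourself flag as ``the heart of the argument'' is a genuine gap, not a deferred technicality. Producing $d_1,\dots,d_n\in B_\infty$ with $d_k^*\f^\dagger(a)d_l\approx\pi(a)_{kl}\,p$ simultaneously for all $a$ in a finite set, with the ideal supports controlled by $\I(\psi)\leq\I(\f)$, is precisely the content of the approximate-domination theorem \cite[Theorem 3.3]{Gab20} --- which is exactly the black box the paper's own proof is built on. It does not follow from the tools you invoke: strong pure infiniteness of $B_\infty$ gives the one-element Cuntz comparison $b^*\psi(a)b\precsim\f(a)$ (your ``engine''), but the passage from one-element comparison to a single compression matching a whole matrix model is a Voiculescu--Kirchberg-type absorption result whose known proofs consume exactness of $A$ and nuclearity in an essential way (weak containment, quasicentral approximate units, the Haagerup--Kirchberg convexity trick), not a unital copy of $\Oinf$ in a relative commutant. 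So as written, your proposal assumes the key theorem rather than proving it. There is also a secondary flaw in the reduction itself: ``finite-dimensional approximate representations of $A^\dagger$'' need not exist at all --- for instance $A=\O2$ is not quasidiagonal, so no approximately multiplicative u.c.p.\ maps into matrix algebras exist. What the c.p.\ approximation property actually yields are scalar coefficients of the form $S(a)_{kl}=\langle\xi_k,\pi(a)\xi_l\rangle$ for a Stinespring dilation $\pi$, which is an honest but generally infinite-dimensional representation; this makes the absorption step you need even more transparently the ideal-related Voiculescu-type theorem.

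For contrast, here is how the paper distributes the work. The hard part is outsourced: \cite[Theorem 3.3]{Gab20} gives finitely many $c_i\in B_\infty$ with $\sum_{i=1}^n c_i^*\f(a)c_i\approx\psi(a)$ on $\mathcal{F}$. The $\Oinf$-stability of $B$ is then used only for the comparatively easy step of merging these $n$ compressions into one: isometries $s_i$ with orthogonal ranges in $B_\infty\cap\f(A)'$ (modulo $B_\infty\cap\f(A)^\perp$) give $\sum_i c_i^*\f(a)c_i=c^*\f(a)c$ with $c=\sum_i s_ic_i$. Finally, instead of adjoining $\1$ to $\mathcal F$, the normalization $\hat{c}^*\hat{c}=b^*b$ is achieved \emph{exactly} by a stabilization trick: embed everything in a separable $D\subseteq B_\infty$, use \cite[Lemma 1.21]{GS22b} to get $\rho:D\otimes\K\to B_\infty$ with $\rho(d\otimes e_{1,1})=d$, take a multiplier isometry $s\in\M(D\otimes\K)$ with $s^*(\f(a)\otimes e_{1,1})s\approx(c^*\f(a)c)\otimes e_{1,1}$ via \cite[Lemma 7.4(i)]{KR02}, and set $\hat{c}=\rho(s(b\otimes e_{1,1}))$. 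In short, your proposal inverts the roles of nuclearity/exactness and $\Oinf$-stability relative to where the real difficulty sits, and the missing absorption lemma is the theorem one would actually have to prove (or cite).
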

\begin{proof}
To prove that $\f$ approximately 1-dominates $\psi$, fix a finite set $\mathcal{F}\subseteq A$, $\e>0$, and a contraction $b\in B_\infty$.\
From \cite[Theorem 3.3]{Gab20} one has that there exist elements $c_i\in B_{\infty}$, for $1\leq i\leq n$, such that
\begin{equation*}
  \bigg\|\sum_{i=1}^n c_i^*\f(a)c_i - \psi(a)\bigg\| \leq \e
\end{equation*}
for all $a\in\mathcal{F}$.\
Since $B$ is $\Oinf$-stable, by \cite[Proposition 3.18]{Gab20} there are elements $(s_k)_{k\in\N}$ in $B_\infty\cap\f(C^*(\mathcal{F}))'$ such that $s_i+B_{\infty}\cap\f(C^*(\mathcal{F}))^\perp$ form a sequence of isometries with orthogonal range projections.\
Note that
\begin{equation*}
\sum_{i=1}^n c_i^*\f(a)c_i = \sum_{i=1}^n c_i^*s_i^*\f(a)s_ic_i = c^*\f(a)c,
\end{equation*}
where $c=\sum_{i=1}^ns_ic_i$.\
Hence $c^*\f(a)c =_{\e} \psi(a)$ for all $a\in\mathcal{F}$.\
One can assume that $c$ is a contraction, for if it is not, we may choose a positive contraction $d\in A$ such that $dad =_\e a$ for all $a\in\mathcal{F}$, replace $\mathcal{F}$ with $\mathcal{F}'=\{d^2\}\cup\{dad\mid a\in\mathcal{F}\}$, and make the choice of $c$ accordingly.\
Then the element $c'=\f(d)c$ has norm at most $1+\e$ and satisfies $c'^*\f(a)c' = c^*\f(dad)c =_{\e} \psi(dad) =_\e \psi(a)$ for all $a\in\mathcal{F}$.
Since this argument works for all $\e>0$ and arbitrary finite subsets of $A$, we may in fact obtain a contraction $c_0\in B_\infty$ with $c_0^*\f(a)c_0=\psi(a)$ for all $a\in \mathcal{F}$.

Next, let $D$ be a separable \Cs-subalgebra of $B_\infty$ containing $\{\f(\mathcal{F}), \psi(\mathcal{F}), b,c_0\}$.\
Invoking \cite[Lemma 1.21]{GS22b}, we have a $\ast$-homomorphism $\rho:D\otimes\K\to B_\infty$ such that $\rho(d\otimes e_{1,1})=d$ for all $d\in D$, making the following diagram commutative,
\[\begin{tikzcd}
	{D} && {B_{\infty}} \\
	& {D \otimes \K}
	\arrow[from=1-1, to=1-3]
	\arrow["{\id_D\otimes e_{1,1}}"'{pos=0.3}, from=1-1, to=2-2]
	\arrow["{\rho}"'{pos=0.7}from=2-2, to=1-3]
\end{tikzcd}\]
By \cite[Lemma 7.4(i)]{KR02} there exists an isometry $s\in\M(D\otimes\K)$ such that $s^*(\f(a)\otimes e_{1,1})s=_{\e}(c_0^*\f(a)c_0)\otimes e_{1,1}$ for all $a\in\mathcal{F}$.\
Let $\hat{c}=\rho(s(b\otimes e_{1,1}))\in B_\infty$.\
We have that
\begin{equation*}
  \hat{c}^*\f(a)\hat{c} =\rho((b\otimes e_{1,1})^*s^*(\f(a)\otimes e_{1,1})s(b\otimes e_{1,1}))=_{\e} b^*\psi(a)b
\end{equation*}
for all $a\in\mathcal{F}$, which implies condition \eqref{eq:e2} of Definition \ref{def:wc}.\
Additionally, we have that $b^*b=\hat{c}^*\hat{c}$, which in particular implies condition \eqref{eq:e1} of Definition \ref{def:wc}, and therefore that $\f$ approximately 1-dominates $\psi$.
\end{proof}

The following lemma is, together with results from \cite{GS22a}, the driving force behind the dynamical Kirchberg--Phillips theorem \cite{GS22b}.
In the context of this article, it will be crucial in the uniqueness result, Theorem \ref{thm:uniqueness}, and remains an indispensable technical device in the overarching structure of our work.

\begin{lemma}[{see \cite[Lemma 3.16]{GS22b}}]\label{lem:3.15}
Let $\alpha:G\curvearrowright A$ and $\beta:G\curvearrowright B$ be two actions on separable \Cs-algebras, and assume that $\beta$ is amenable and isometrically shift-absorbing.\
Let $(\f,\bbu),(\psi,\bbv):(A,\alpha)\to(B,\beta)$ be two proper cocycle morphisms.\
If $\f$ approximately 1-dominates $\psi$ as ordinary $\ast$-homomorphisms, then $(\f,\bbu)$ approximately 1-dominates $(\psi,\bbv)$.
\end{lemma}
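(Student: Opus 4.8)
The plan is to reduce the statement to the construction of a single intertwining contraction in the sequence algebra and then to obtain the desired $c$ by compression. Concretely, I would first observe that it suffices to produce, for the given compact set $K\subseteq G$, finite set $\mathcal{F}\subseteq A$ and $\e>0$, a contraction $V\in B_{\infty,\beta}$ with
\begin{equation*}
V^*\f(a)V =_{\e} \psi(a)\ \ (a\in\mathcal{F}), \qquad V^*\bbu_g\beta_{\infty,g}(V) =_{\e} \bbv_g\ \ (g\in K),
\end{equation*}
where $\f,\psi,\bbu,\bbv$ are viewed inside the sequence algebra via $\iota_\infty$. Indeed, for such a $V$ and an arbitrary contraction $b\in B$, the element $c:=Vb$ satisfies $c^*\f(a)c=b^*V^*\f(a)Vb\approx b^*\psi(a)b$ and $c^*\bbu_g\beta_g(c)=b^*V^*\bbu_g\beta_g(V)\beta_g(b)\approx b^*\bbv_g\beta_g(b)$, which are exactly \eqref{eq:e2} and \eqref{eq:e1}; a standard reindexing then returns a genuine $c\in B$ realizing the domination. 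This decouples the problem into two properties of one element $V$: (I) the domination of $\psi$ by $\f$ realized by a single element, and (II) the intertwining of the cocycle $\bbu$ into $\bbv$.

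Requirement (I) is essentially the hypothesis. Since $\f$ approximately $1$-dominates $\psi$ as ordinary $\ast$-homomorphisms, applying Definition~\ref{def:wc} to an approximate unit $b=e$ of $B$ and passing to $B_{\infty,\beta}$ yields a contraction $V_0$ with $V_0^*\f(a)V_0\approx\psi(a)$; if (I) is first obtained only in the multi-witness form $\sum_i c_i^*\f(a)c_i\approx\psi(a)$, the witnesses are merged into a single contraction using a sequence of isometries in $B_\infty\cap\f(A)'$ with orthogonal ranges coming from $\Oinf$-stability, exactly as in the proof of Theorem~\ref{thm:weak-cont}. The content of the lemma is therefore to upgrade $V_0$ so that it additionally satisfies (II), and this is where amenability and isometric shift absorption of $\beta$ enter together.

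For (II) I would exploit that isometric shift absorption furnishes an equivariant isometric linear map $\fs\colon(\Hil_G,\lambda)\to(F_{\infty,\beta}(B),\tilde{\beta}_\infty)$ whose defining relation $\fs(\xi)^*\fs(\eta)=\langle\xi,\eta\rangle\cdot\1$ collapses cross terms to scalars, and whose equivariance $\tilde{\beta}_{\infty,g}\circ\fs=\fs\circ\lambda_g$ converts $\beta_{\infty,g}$ on the range of $\fs$ into the regular representation. Extending $\fs$ $B$-linearly and feeding in the approximately invariant contractions $\zeta_i\in\C_c(G,B)$ supplied by amenability (Definition~\ref{def:amenability}), one obtains elements commuting with $B$ whose $\beta_{\infty,g}$-translates are governed by the $B$-valued correlations $\langle\zeta_i,\bar{\beta}_g\zeta_j\rangle$. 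The plan is to assemble $V$ from $V_0$ together with such shift-type elements arranged in orthogonal ranges, so that the cross terms in $V^*\f(a)V$ collapse through $\fs(\xi)^*\fs(\eta)=\langle\xi,\eta\rangle\cdot\1$ and thereby preserve (I), while the approximate $\bar{\beta}$-invariance of the $\zeta_i$, combined with the cocycle identities $\bbu_{gh}=\bbu_g\beta_g(\bbu_h)$ and $\bbv_{gh}=\bbv_g\beta_g(\bbv_h)$, forces $V^*\bbu_g\beta_{\infty,g}(V)$ to approximate $\bbv_g$ uniformly on $K$. This is precisely the step carried out in \cite[Lemma~3.16]{GS22b}, and I would follow its line.

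The main obstacle is achieving (I) and (II) simultaneously with one contraction. The two cocycles are in general nonabelian, so $\bbu$ cannot simply be untwisted to a coboundary and then matched to $\bbv$; a naive use of a cocycle-invariant isometry fails because proper isometries carry a nontrivial range projection, and an exactly $\bbu$-invariant vector cannot exist once the cocycle values fail to commute. The resolution is genuinely to interleave the two mechanisms---the sum-collapsing role of the orthogonal shift isometries from isometric shift absorption and the averaging role of the amenable net---so that the domination data and the cocycle correction are imposed on the same element without interfering. Controlling the resulting error uniformly over $K$, while remembering that the invariance in Definition~\ref{def:amenability} is available only after multiplication by elements of $B$, is the delicate quantitative heart of the argument.
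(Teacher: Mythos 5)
The central reduction you propose is unachievable as stated, and this is not a cosmetic defect. You ask for a single contraction $V\in B_{\infty,\beta}$ with $V^*\bbu_g\beta_{\infty,g}(V)=_{\e}\bbv_g$ in norm for all $g\in K$, to be used uniformly for every contraction $b\in B$. Whenever $B$ is non-unital, no such $V$ can exist for $\e<1$: since $\bbu_g\in\U(\1+B)$, the element $V^*\bbu_g\beta_{\infty,g}(V)$ lies in $B_{\infty}$, while $\bbv_g\in\1+B$, so the difference $\bbv_g-V^*\bbu_g\beta_{\infty,g}(V)$ lies in $\1+B_{\infty}$; as $B_\infty$ is again non-unital (a unit would be represented by a sequence $(e_n)$ with $\sup_{\|x\|\leq 1}\|e_nx-x\|\to 0$, forcing $B$ to be unital), the character on $\C\1+B_\infty$ annihilating $B_\infty$ gives $\|\bbv_g-V^*\bbu_g\beta_{\infty,g}(V)\|\geq 1$. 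The non-unital case is the relevant one: Theorem \ref{thm:uniqueness} applies this lemma with $B$ strongly stable. This obstruction is exactly why Definition \ref{def:wc} compares the two cocycle morphisms only after compressing both sides by $b$. One can repair your intermediate statement by demanding only that $(V^*\bbu_g\beta_{\infty,g}(V)-\bbv_g)x$ be small for the finitely many $x\in B$ actually needed (which is all your computation $c=Vb$ uses), but then $V$ must be built jointly with $b$, and the ``reduction'' collapses back into the original statement. A related error: the element $V_0$ you extract from the hypothesis cannot be ``passed to $B_{\infty,\beta}$''; the trivial-group domination hypothesis carries no $\beta$-equicontinuity of the witnesses, so one only gets $V_0\in B_\infty$, and the mechanism of Lemma \ref{lem:inductivelim}\ref{item:third} for landing in the continuous part is unavailable, since it uses precisely the estimate \eqref{eq:e1} over compact sets that this lemma is supposed to produce.

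Beyond this, the step that constitutes the lemma is described but not proved: you list the ingredients ($\fs$, the vectors $\zeta_i$, the cocycle identities) and then defer to \cite[Lemma 3.16]{GS22b}, yourself calling the missing part ``the delicate quantitative heart''. Note that the paper contains no proof either --- the lemma is imported verbatim from that reference --- so deferring is consistent with the paper; but judged as a blind proof, your proposal establishes only the (flawed) outer reduction. For orientation, the substance runs as follows: from the hypothesis one gets $V_0\in B_\infty$ with $V_0^*\f(a)V_0=\psi(a)$ for all $a\in A$ and $V_0^*V_0x=x$ for all $x\in B$; one then pushes the vector $\eta(h)=\bbu_h\beta_{\infty,h}(V_0)\bbv_h^*\,b\,\zeta_i(h)$ (at the level of representing sequences of $V_0$) through the $B$-linear extension of $\fs$ and computes the resulting correlation integrals. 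The equivariance relations $\bbu_h^*\f(a)\bbu_h=\beta_h(\f(\alpha_{h^{-1}}(a)))$ and $\bbv_h\beta_h(\psi(\alpha_{h^{-1}}(a)))\bbv_h^*=\psi(a)$ turn the \eqref{eq:e2}-integrand into $\zeta_i(h)^*b^*\psi(a)b\,\zeta_i(h)$; the cocycle identities $\bbu_g\beta_g(\bbu_{g^{-1}h})=\bbu_h$ and $\beta_g(\bbv_{g^{-1}h})=\bbv_g^*\bbv_h$, together with $V_0^*V_0x=x$, turn the \eqref{eq:e1}-integrand into $\zeta_i(h)^*b^*\bbv_g\beta_g(b)\,\bar{\beta}_g(\zeta_i)(h)$; and the almost-centrality and almost-invariance of $\zeta_i$ (Definition \ref{def:amenability}) then yield $b^*\psi(a)b$ and $b^*\bbv_g\beta_g(b)$ up to errors uniform over $K$. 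None of these computations, nor the reindexation needed to make sense of $\fs$ against $B_\infty$-valued vectors, appears in your write-up. Finally, a small point: the ``multi-witness merging'' you import from Theorem \ref{thm:weak-cont} is unnecessary here, because approximate $1$-domination is already a single-witness hypothesis; that step belongs to the proof of Theorem \ref{thm:weak-cont}, whose hypothesis is an ideal inequality.
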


\begin{definition}\label{def:cuntz_sum}
Let $\alpha:G\curvearrowright A$ and $\beta:G\curvearrowright B$ be actions on \Cs-algebras.\
Suppose there exists a pair of isometries $s_1,s_2\in \M(B)^\beta$ satisfying $s_1s_1^*+s_2s_2^*=\1$.
One may define the \textit{Cuntz sum} of elements $a,b\in\M(B)$ as
\begin{equation*}
a \oplus_{s_1,s_2} b = s_1as_1^*+s_2bs_2^* \in\M(B).
\end{equation*}
Let $(\f,\bbu),(\psi,\bbv):(A,\alpha)\to(B,\beta)$ be two (proper) cocycle morphisms.\
The Cuntz sum of $(\f,\bbu)$ and $(\psi,\bbv)$ is the (proper) cocycle morphism $(\f\oplus_{s_1,s_2}\psi,\bbu\oplus_{s_1,s_2}\bbv):(A,\alpha)\to(B,\beta)$ given pointwise by $(\f\oplus_{s_1,s_2}\psi)(a)=\f(a)\oplus_{s_1,s_2}\psi(a)$, and $(\bbu\oplus_{s_1,s_2}\bbv)_g=\bbu_g\oplus_{s_1,s_2}\bbv_g$.

Note that whenever $a,b\in B$, their Cuntz sum lies in $B$ as well.\
Moreover, $a\oplus_{s_1,s_2} b$ is unitarily equivalent to $b\oplus_{s_1,s_2} a$ via the unitary $U=s_1s_2^*+s_2s_1^*$, and if one picks another pair of isometries $t_1,t_2\in\M(B)^{\beta}$ such that $t_1t_1^*+t_2t_2^*=\1$, then $a\oplus_{s_1,s_2}b$ is unitarily equivalent to $a\oplus_{t_1,t_2}b$ via the unitary $V=s_1t_1^*+s_2t_2^*\in\M(B)^{\beta}$.\
Given $a,b\in\M(B)$, we write ``$a \oplus b$" as a shorthand notation for ``$a \oplus_{s_1,s_2} b$".
\end{definition}

\begin{lemma}\label{lem:2.8}
Let $\alpha:G\curvearrowright A$ and $\beta:G\curvearrowright B$ be two actions on separable \Cs-algebras.
Suppose that $\beta$ is strongly stable.\
Let $(\f,\bbu),(\psi,\bbv):(A,\alpha)\to(B,\beta)$ be two proper cocycle morphisms such that $(\f,\bbu)$ and $(\psi,\bbv)$ approximately 1-dominate each other.\
Suppose there exists a unital embedding $\O2 \hookrightarrow \M(B)^\beta$ commuting with the range of $\f,\psi,\bbu$ and $\bbv$.
Then $(\f,\bbu)$ and $(\psi,\bbv)$ are strongly asymptotically unitarily equivalent.
\end{lemma}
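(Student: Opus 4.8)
The plan is to prove the statement in two phases: first upgrade the mutual approximate $1$-domination to genuine approximate unitary equivalence by unitaries in $\U(\1+B)$ that are connected to $\1$, and then run a standard Elliott-type approximate intertwining to produce the norm-continuous path realizing strong asymptotic unitary equivalence. Two structural facts drive everything. First, the commuting unital copy of $\O2$ makes each morphism absorb its own Cuntz sum: writing $s_1,s_2$ for the canonical generators of $\O2\subseteq\M(B)^\beta$, the fact that they commute with the ranges of $\f,\psi,\bbu,\bbv$ gives $s_i\f(a)s_i^*=\f(a)s_is_i^*$, so that $(\f\oplus\f,\bbu\oplus\bbu)=(\f,\bbu)$ and likewise $(\psi\oplus\psi,\bbv\oplus\bbv)=(\psi,\bbv)$, and by iteration each morphism equals its own countably infinite Cuntz sum. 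Second, strong stability of $\beta$ supplies a sequence of isometries $(r_n)_{n}$ in $\M(B)^\beta$ with orthogonal ranges summing strictly to $\1$, which furnishes the shifts and the homotopies needed to keep all constructed unitaries inside $\U(\1+B)$ and connected to $\1$.

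For the first phase, fix a finite set $\mathcal F\subseteq A$, a compact set $K\subseteq G$ containing $1_G$, and $\e>0$. Applying mutual approximate $1$-domination with the contraction $b$ chosen to act as an approximate unit on $\psi(\mathcal F)$ and $\{\bbv_g-\1 : g\in K\}$ (respectively $\f(\mathcal F)$ and $\{\bbu_g-\1 : g\in K\}$) yields contractions $c,d\in B$ with $c^*\f(a)c\approx\psi(a)$, $c^*\bbu_g\beta_g(c)\approx\bbv_g$, and symmetrically $d^*\psi(a)d\approx\f(a)$, $d^*\bbv_g\beta_g(d)\approx\bbu_g$ on $(\mathcal F,K)$. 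Evaluating the cocycle estimate at $g=1_G$ forces $c^*c\approx b^*b$, so $c$ is an approximate partial isometry whose source projection acts as an approximate unit on the relevant data; after a standard polar-decomposition adjustment this turns $c,d$ into approximate \emph{isometric} intertwiners $w^*\f(a)w\approx\psi(a)$, $w^*\bbu_g\beta_g(w)\approx\bbv_g$ and the reverse. The key step is then an Eilenberg swindle: using the infinite self-absorption $(\f,\bbu)=\bigoplus_n(\f,\bbu)$ coming from the commuting $\O2$, one interleaves the two isometric intertwiners along the orthogonal ranges of the $r_n$ in a back-and-forth manner to assemble a single unitary $W\in\U(\1+B)$ with $W\f(a)W^*\approx\psi(a)$ and $W\bbu_g\beta_g(W)^*\approx\bbv_g$ on $(\mathcal F,K)$; the rotation unitaries appearing in the interleaving live in the $\beta$-fixed Cuntz structure and are therefore homotopic to $\1$, so $W$ may be taken in the connected component of $\1$.

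For the second phase, one feeds the approximate unitary equivalence from the first phase into the usual one-sided approximate intertwining machine. Choosing an increasing exhaustion of $A$ by finite sets $\mathcal F_n$, of $G$ by compact sets $K_n$, and a null sequence $\e_n$, one produces unitaries $w_n\in\U(\1+B)$ that conjugate $(\f,\bbu)$ to $(\psi,\bbv)$ with error at most $\e_n$ on $(\mathcal F_n,K_n)$ and satisfy $w_{n+1}w_n^*\approx\1$ on the relevant growing data. Each $w_{n+1}w_n^*$ is connected to $\1$ by a short norm-continuous path inside $\U(\1+B)$, again using the $\beta$-fixed isometries from strong stability, and concatenating and reparametrising these paths yields a norm-continuous $u:[0,\infty)\to\U(\1+B)$ with $u_0=\1$, $\psi(a)=\lim_t u_t\f(a)u_t^*$ and $\bbv_g=\lim_t u_t\bbu_g\beta_g(u_t)^*$ uniformly on compact subsets of $G$, which is precisely strong asymptotic unitary equivalence, i.e.\ $(\f,\bbu)\sasu(\psi,\bbv)$.

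I expect the main obstacle to be the swindle in the first phase, specifically the simultaneous bookkeeping of three requirements: that the assembled $W$ genuinely lie in the proper unitization $\U(\1+B)$ rather than merely in $\U(\M(B))$, that the $\beta$-cocycle estimates stay uniform over the compact set $K$ throughout the back-and-forth (so that the error does not degrade as one shifts along the $r_n$), and that $W$ be connected to $\1$ so that the eventual path can start at $\1$. It is exactly here that the hypotheses of strong stability and of a $\beta$-\emph{fixed} commuting $\O2$ are indispensable, and this is the point most naturally handled by invoking the two-sided intertwining technology of \cite{GS22a}.
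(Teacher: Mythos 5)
Your proposal has a genuine gap, and it sits at the decisive step: the upgrade from approximate to asymptotic unitary equivalence in your second phase. You assert that the ``usual one-sided approximate intertwining machine'' produces unitaries $w_n\in\U(\1+B)$ implementing the equivalence on $(\mathcal{F}_n,K_n)$ which moreover satisfy $w_{n+1}w_n^*\approx\1$ (or admit short paths to $\1$ compatible with the data). Nothing you have established yields this. From $\Ad(w_n)\circ(\f,\bbu)\approx(\psi,\bbv)\approx\Ad(w_{n+1})\circ(\f,\bbu)$ one only learns that $w_{n+1}w_n^*$ approximately commutes with $\f(\mathcal{F}_n)\cup\{\bbu_g: g\in K_n\}$; connecting such a unitary to $\1$ through unitaries that \emph{still} approximately commute with this data is a homotopy-lemma-type theorem, and your one-line justification (``using the $\beta$-fixed isometries from strong stability'') is not a proof of it. The issue is not cosmetic: approximate unitary equivalence does not imply asymptotic unitary equivalence in general --- for morphisms between Kirchberg algebras the two notions are separated by the difference between $KL$ and $KK$ --- so any correct upgrade must genuinely invoke the hypotheses (mutual $1$-domination, the commuting $\beta$-fixed copy of $\O2$, strong stability) at precisely this point, and your phase 2 never does. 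A secondary instance of the same problem occurs in your phase 1: the interleaved sums along the ranges of the $r_n$ converge only strictly, so the assembled $W$ a priori lies in $\U(\M(B))$, and forcing it into $\U(\1+B)$, connected to $\1$, is again nontrivial (you flag this yourself, but flagging is not resolving).

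For comparison, the paper's proof never passes through approximate unitary equivalence at all. It applies \cite[Lemma 2.8]{GS22b} symmetrically --- a two-sided intertwining theorem whose conclusion already carries the norm-continuous path --- to get $(\f,\bbu)\sasu(\f\oplus\psi,\bbu\oplus\bbv)$ and $(\psi,\bbv)\sasu(\psi\oplus\f,\bbv\oplus\bbu)$; it then observes that the two Cuntz sums are \emph{exactly} unitarily equivalent via the flip $U=s_1s_2^*+s_2s_1^*\in\M(B)^\beta$, and invokes \cite[Lemma 2.7]{GS22b} (this is where strong stability does its work) to conclude that $\Ad(U)$ is strongly asymptotically unitarily equivalent to $\id_B$ via a path in $\U(\1+B)$. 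In other words, all of the path control that your proposal treats as routine is concentrated in those two cited results. If you want an argument in the spirit of your proposal, the correct move is to run the two-sided intertwining of \cite{GS22a} from the outset, which constructs the continuous path as it goes, rather than to retrofit a path onto separately constructed approximate unitary equivalences.
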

\begin{proof}
The hypotheses of \cite[Lemma 2.8]{GS22b} are satisfied symmetrically, entailing that $(\f,\bbu)$ is strongly asymptotically unitarily equivalent to $(\f\oplus\psi,\bbu\oplus\bbv)$, and $(\psi,\bbv)$ is strongly asymptotically unitarily equivalent to $(\psi\oplus\f,\bbv\oplus\bbu)$.\
Hence, by transitivity of strong asymptotic unitary equivalence, it suffices to prove that $(\f\oplus\psi,\bbu\oplus\bbv)$ and $(\psi\oplus\f,\bbv\oplus\bbu)$ are strongly asymptotically unitarily equivalent.

Recall from Definition \ref{def:cuntz_sum} that $(\f\oplus\psi,\bbu\oplus\bbv)$ and $(\psi\oplus\f,\bbv\oplus\bbu)$ are unitarily equivalent via a unitary $U\in\M(B)^\beta$.\
Since the hypotheses of \cite[Lemma 2.7]{GS22b} hold true, we conclude that the equivariant automorphism given by $\Ad(U)$ is strongly asymptotically unitarily equivalent to $\id_B$ via a norm-continuous path of unitaries $u:[0,\infty)\to\U(\1+B)$.\
As a result, $u$ is the norm-continuous path that guarantees strong asymptotic unitary equivalence between $(\f\oplus\psi,\bbu\oplus\bbv)$ and $(\psi\oplus\f,\bbv\oplus\bbu)$.
\end{proof}

We are ready to prove the main result of this section:

\begin{theorem}[Uniqueness]\label{thm:uniqueness}
Let $\alpha:G\curvearrowright A$ be an action on a separable, exact $\Cs$-algebra, and $\beta:G\curvearrowright B$ an amenable, strongly stable, equivariantly $\O2$-stable, isometrically shift-absorbing action on a separable \Cs-algebra.\
If $(\f,\bbu),(\psi,\bbv):(A,\alpha)\to(B,\beta)$ are two proper cocycle morphisms with $\f$ and $\psi$ nuclear, then $\I(\f)=\I(\psi)$ if and only if $(\f,\bbu)$ and $(\psi,\bbv)$ are strongly asymptotically unitarily equivalent.
\end{theorem}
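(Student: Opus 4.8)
The plan is to treat the two implications separately. I would first dispatch the easy implication: a strong asymptotic unitary equivalence supplies a norm-continuous path $u\colon[0,\infty)\to\U(\1+B)$ with $\psi(a)=\lim_{t\to\infty}u_t\f(a)u_t^*$ for all $a\in A$. Since conjugation by a unitary in $\U(\1+B)$ preserves every closed two-sided ideal of $B$, for each $I\in\I(A)$ and $a\in I$ every $u_t\f(a)u_t^*$ lies in the closed ideal $\overline{B\f(I)B}=\I(\f)(I)$, hence so does the limit $\psi(a)$. Thus $\I(\psi)\leq\I(\f)$, and running the argument with the reversed path gives the reverse inequality, so $\I(\f)=\I(\psi)$.

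For the converse, suppose $\I(\f)=\I(\psi)$. The obstruction to applying Lemma~\ref{lem:2.8} directly is that it demands a unital copy of $\O2$ in $\M(B)^\beta$ commuting with the (separable) ranges of $\f,\psi,\bbu,\bbv$, and no such copy need exist in $B$ itself. My plan is to manufacture one by tensoring: I would pass to $(B\otimes\O2,\beta\otimes\id_{\O2})$ together with the proper cocycle morphisms $(\f\otimes\1_{\O2},\bbu\otimes\1_{\O2})$ and $(\psi\otimes\1_{\O2},\bbv\otimes\1_{\O2})$, for which $\1_{\M(B)}\otimes\O2$ is a unital copy of $\O2$ in $\M(B\otimes\O2)^{\beta\otimes\id_{\O2}}$ commuting with all four ranges. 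Because $\beta$ is equivariantly $\O2$-stable, $\beta\otimes\id_{\O2}\cc\beta$, so $\beta\otimes\id_{\O2}$ is again amenable, isometrically shift-absorbing and equivariantly $\O2$-stable; its strong stability follows from that of $\beta$ after commuting the $\K$- and $\O2$-factors. Finally, since $\O2$ is simple, Remark~\ref{rem:prodprim} gives $\Prim(B\otimes\O2)\cong\Prim(B)$, under which $\I(\f\otimes\1_{\O2})=\I(\psi\otimes\1_{\O2})$.

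Next I would verify the remaining hypothesis of Lemma~\ref{lem:2.8}, that the two tensored morphisms approximately 1-dominate each other. As $B\otimes\O2$ is separable, stable and $\Oinf$-stable, Theorem~\ref{thm:weak-cont} applies to the nuclear $\ast$-homomorphisms $\iota_{\infty}\circ(\f\otimes\1_{\O2})$ and $\iota_{\infty}\circ(\psi\otimes\1_{\O2})$ into $(B\otimes\O2)_\infty$; since their induced $\Cu$-morphisms coincide, each approximately 1-dominates the other. Applying Lemma~\ref{lem:inductivelim}(iii) with the trivial group transfers this to the ordinary $\ast$-homomorphisms $\f\otimes\1_{\O2}$ and $\psi\otimes\1_{\O2}$ into $B\otimes\O2$, and Lemma~\ref{lem:3.15}---using amenability and isometric shift-absorption of $\beta\otimes\id_{\O2}$---upgrades this to mutual approximate 1-domination of the proper cocycle morphisms. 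Lemma~\ref{lem:2.8} then yields that $(\f\otimes\1_{\O2},\bbu\otimes\1_{\O2})$ and $(\psi\otimes\1_{\O2},\bbv\otimes\1_{\O2})$ are strongly asymptotically unitarily equivalent.

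It remains to cancel the auxiliary tensor factor, which I expect to be the crux. Set $\kappa=(\id_B\otimes\1_{\O2},\1)\colon(B,\beta)\to(B\otimes\O2,\beta\otimes\id_{\O2})$; by Remark~\ref{rem:composition} one has $\kappa\circ(\f,\bbu)=(\f\otimes\1_{\O2},\bbu\otimes\1_{\O2})$ and $\kappa\circ(\psi,\bbv)=(\psi\otimes\1_{\O2},\bbv\otimes\1_{\O2})$. Equivariant $\O2$-stability, via Remark~\ref{rem:thm5.6}, renders $\kappa$ strongly asymptotically unitarily equivalent to a proper cocycle conjugacy $(\Theta,\mathbbm{w})$. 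Invoking the compatibility of strong asymptotic unitary equivalence with composition of proper cocycle morphisms (from the categorical framework of \cite{Sza21}), I would conclude that $(\Theta,\mathbbm{w})\circ(\f,\bbu)$ is strongly asymptotically unitarily equivalent to $\kappa\circ(\f,\bbu)$, and likewise for $\psi$; combined with the previous step this shows that $(\Theta,\mathbbm{w})\circ(\f,\bbu)$ and $(\Theta,\mathbbm{w})\circ(\psi,\bbv)$ are strongly asymptotically unitarily equivalent. Composing on the left with the inverse cocycle conjugacy $(\Theta,\mathbbm{w})^{-1}$ and using the same compatibility cancels $(\Theta,\mathbbm{w})$, giving that $(\f,\bbu)$ and $(\psi,\bbv)$ are strongly asymptotically unitarily equivalent, as desired. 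The genuine difficulty lies in this final manoeuvre: it rests on Remark~\ref{rem:thm5.6} and on the good behaviour of strong asymptotic unitary equivalence under composition with cocycle conjugacies.
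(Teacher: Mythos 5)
Your proposal is correct and follows essentially the same route as the paper's proof: tensor with $\O2$, obtain mutual approximate 1-domination of $\f\otimes\1_{\O2}$ and $\psi\otimes\1_{\O2}$ via Theorem~\ref{thm:weak-cont} and Lemma~\ref{lem:inductivelim}(iii), upgrade through Lemma~\ref{lem:3.15} and Lemma~\ref{lem:2.8} to a strong asymptotic unitary equivalence of the tensored morphisms, and then cancel the $\O2$-factor using the proper cocycle conjugacy supplied by Remark~\ref{rem:thm5.6} together with compatibility of strong asymptotic unitary equivalence under composition. The only differences are expository: you spell out the easy implication, the hypothesis checks for Lemma~\ref{lem:2.8}, and the composition-compatibility facts that the paper uses implicitly.
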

\begin{proof}
If $(\f,\bbu)$ and $(\psi,\bbv)$ are strongly asymptotically unitarily equivalent, then it follows straightaway that $\I(\f)=\I(\psi)$.

Assume now that $\I(\f)=\I(\psi)$.\
By Remark \ref{rem:thm5.6}, there exists a proper cocycle conjugacy
\begin{equation*}
(\theta,\mathbbm{x}) : (B,\beta) \to (B\otimes\O2, \beta\otimes\id_{\O2})
\end{equation*}
that is strongly asymptotically unitarily equivalent to the first factor embedding $\id_{B}\otimes\1_{\O2}$ with trivial cocycle.\
Let us show that the proper cocycle morphisms $(\f\otimes\1_{\O2},\bbu\otimes\1_{\O2})$ and $(\psi\otimes\1_{\O2},\bbv\otimes\1_{\O2})$ are strongly asymptotically unitarily equivalent.\
Since $\f\otimes\1_{\O2}$ and $\psi\otimes\1_{\O2}$ are nuclear and $\I(\f\otimes\1_{\O2})=\I(\psi\otimes\1_{\O2})$, Theorem \ref{thm:weak-cont} ensures that $\iota_\infty \circ (\f\otimes\1_{\O2})$ and $\iota_\infty \circ (\psi\otimes\1_{\O2})$ approximately 1-dominate each other.\
Thus, we may conclude with Lemma \ref{lem:inductivelim}\ref{item:third} that $\f\otimes\1_{\O2}$ and $\psi\otimes\1_{\O2}$ approximately 1-dominate each other.\
We can then apply Lemma \ref{lem:3.15} to get that $(\f\otimes\1_{\O2},\bbu\otimes\1_{\O2})$ and $(\psi\otimes\1_{\O2},\bbv\otimes\1_{\O2})$ approximately 1-dominate each other.\
By Lemma \ref{lem:2.8} it follows that $(\f\otimes\1_{\O2},\bbu\otimes\1_{\O2})$ and $(\psi\otimes\1_{\O2},\bbv\otimes\1_{\O2})$ are strongly asymptotically unitarily equivalent.\
As a consequence, we have that $(\f,\bbu)$ is strongly asymptotically unitarily equivalent to $(\theta,\mathbbm{x})^{-1} \circ (\f\otimes\1_{\O2},\bbu\otimes\1_{\O2})$, which is strongly asymptotically unitarily equivalent to $(\theta,\mathbbm{x})^{-1} \circ (\psi\otimes\1_{\O2},\bbv\otimes\1_{\O2})$, and the latter is strongly asymptotically unitarily equivalent to $(\psi,\bbv)$.\
By transitivity of strong asymptotic unitary equivalence, it follows that $(\f,\bbu)$ is strongly asymptotically unitarily equivalent to $(\psi,\bbv)$.
\end{proof}

We derive a unital version of the uniqueness theorem when $G$ is exact.\
Note that if $G$ is not exact, then an action $\beta$ as in the following corollary cannot exist as a consequence of \cite[Corollary 3.6]{OS21}.

\begin{corollary}\label{cor:uniqueness}
Assume $G$ is exact.\
Let $\alpha:G\curvearrowright A$ be an action on a separable, exact, unital $\Cs$-algebra, and $\beta:G\curvearrowright B$ an amenable,  equivariantly $\O2$-stable, isometrically shift-absorbing action on a separable, unital \Cs-algebra.\
If $(\f,\bbu),(\psi,\bbv):(A,\alpha)\to(B,\beta)$ are two proper cocycle morphisms with $\f$ and $\psi$ nuclear and unital, then $\I(\f)=\I(\psi)$ if and only if $(\f,\bbu)$ and $(\psi,\bbv)$ are strongly asymptotically unitarily equivalent.
\end{corollary}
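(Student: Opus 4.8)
The forward implication needs no new ideas: strong asymptotic unitary equivalence forces $\psi(a)$ to lie in the norm closure of $\{u\f(a)u^*\mid u\in\U(\1+B)\}$ for each $a\in A$, so $\f$ and $\psi$ generate the same ideals and $\I(\f)=\I(\psi)$, exactly as in Theorem \ref{thm:uniqueness}. For the converse, the only obstruction to quoting Theorem \ref{thm:uniqueness} verbatim is its strong-stability hypothesis on $\beta$, which a unital $B$ can never satisfy. My plan is therefore to stabilise, apply Theorem \ref{thm:uniqueness} inside $B\otimes\K$, and then push the resulting equivalence back down to $B$. I note in passing that exactness of $G$ does not enter the argument directly; it appears in the statement only because, by \cite[Corollary 3.6]{OS21}, a unital \Cs-algebra admits an amenable $G$-action only when $G$ is exact, so the hypotheses would otherwise be vacuous.

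Let $\kappa:(B,\beta)\to(B\otimes\K,\beta\otimes\id_\K)$ be the equivariant corner embedding $\kappa(b)=b\otimes e_{1,1}$ of Lemma \ref{lem:inductivelim}\ref{item:second}, and set $p=\1_B\otimes e_{1,1}$. First I would verify that $\beta\otimes\id_\K$ satisfies the hypotheses of Theorem \ref{thm:uniqueness}: it is strongly stable because a fixed isomorphism $\K\otimes\K\cong\K$ intertwines the two actions, it is equivariantly $\O2$-stable since $\beta$ is, and both amenability and isometric shift-absorption are inherited from $\beta$ upon tensoring with the trivial action on $\K$ (the latter because stabilisation leaves the relevant central sequence algebra unchanged). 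Viewing $(\kappa,\1)$ as a proper cocycle morphism and composing as in Remark \ref{rem:composition}, the pairs $\kappa\circ(\f,\bbu)$ and $\kappa\circ(\psi,\bbv)$ become proper cocycle morphisms into $(B\otimes\K,\beta\otimes\id_\K)$ whose $\ast$-homomorphism parts stay nuclear. Since $J\mapsto J\otimes\K$ is an isomorphism $\I(B)\cong\I(B\otimes\K)$, the equality $\I(\f)=\I(\psi)$ gives $\I(\kappa\circ\f)=\I(\kappa\circ\psi)$, and Theorem \ref{thm:uniqueness} produces a norm-continuous path $w:[0,\infty)\to\U(\1+B\otimes\K)$ with $w_0=\1$ witnessing $\kappa\circ(\f,\bbu)\sasu\kappa\circ(\psi,\bbv)$.

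The heart of the proof, and the step I expect to be hardest, is to descend this path to one living in $\U(\1+B)=\U(B)$. Because $\f$ and $\psi$ are unital, $\kappa\circ\f$ and $\kappa\circ\psi$ map $A$ unitally into the corner $p(B\otimes\K)p\cong B$, their cocycles are supported on this corner, and $p$ is fixed by $\beta\otimes\id_\K$. Taking $a=\1$ shows that $P_t:=w_tpw_t^*$ is a norm-continuous path of projections with $P_0=p$ and $P_t\to p$. I would lift it to a norm-continuous unitary path $z:[0,\infty)\to\U(\1+B\otimes\K)$ with $z_0=\1$ and $z_tP_tz_t^*=p$; the delicate point is to choose $z$ so that moreover $z_t\to\1$. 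The obstruction to doing so lies in the path-components of the unitary group of the stabiliser of $p$, namely of $\U(\1+B)$ and of $\U(\1+(\1-p)(B\otimes\K)(\1-p))$, and these are connected precisely because equivariant $\O2$-stability makes $B$, and hence both corners, $K_1$-trivial and $K_1$-injective.

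Granting this, $\tilde w_t:=z_tw_t$ commutes with $p$, so its corner part $v_t:=p\tilde w_tp$ is a norm-continuous path in $\U(B)$ with $v_0=\1$. Because $z_t\to\1$, one reads off $\psi(a)=\lim_t v_t\f(a)v_t^*$ from the corresponding property of $w$, and — using that $p$ is fixed so that conjugation by $\tilde w_t$ respects the corner — one likewise obtains $\lim_t\max_{g\in K}\|\bbv_g-v_t\bbu_g\beta_g(v_t)^*\|=0$ for every compact $K\subseteq G$. This is exactly the asserted strong asymptotic unitary equivalence of $(\f,\bbu)$ and $(\psi,\bbv)$, completing the converse.
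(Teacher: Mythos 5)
Your proposal is correct and follows the same route as the paper's proof: stabilise via $\kappa(b)=b\otimes e_{1,1}$, apply Theorem \ref{thm:uniqueness} to $\kappa\circ(\f,\bbu)$ and $\kappa\circ(\psi,\bbv)$ (noting that $\beta\otimes\id_\K$ inherits all hypotheses and is strongly stable), and then use unitality of $\f,\psi$ together with connectivity of unitary groups coming from $\O2$-stability to push the resulting path down to the corner $p(B\otimes\K)p\cong B$. The only divergence is in the mechanics of the descent, which you rightly flag as the delicate step: you first straighten the projection path $P_t=w_tpw_t^*$ by a unitary path $z_t$ with $z_0=\1$ and $z_t\to\1$, which forces you to control path-components of the full stabiliser of $p$, i.e.\ both $\U(\1+B)$ and $\U(\1+(\1-p)(B\otimes\K)(\1-p))$; the paper instead directly takes the unitary part $u_t=\kappa^{-1}\bigl((pv_tp)\cdot|pv_tp|^{-1}\bigr)$ of the compressed path, which is well defined after discarding an initial segment because $\|[v_t,p]\|\to 0$, and only at the very end repairs the initial condition $u_0=\1$ by joining $u_0$ to $\1$ inside $\U(B)$, connected since $B$ is $\O2$-stable. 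Both arguments are sound, but the paper's polar-decomposition shortcut avoids the projection-path lifting and the connectivity analysis of the complementary corner altogether; your side remark that exactness of $G$ enters only to make the hypotheses non-vacuous (via \cite[Corollary 3.6]{OS21}) also matches the paper's own comment preceding the corollary.
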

\begin{proof}
Let us denote by $B^s$ the \Cs-algebra $B\otimes\K$ and by $\beta^s$ the action $\beta\otimes\id_{\K}:G\curvearrowright B\otimes\K$.
Consider the canonical inclusion
\begin{equation*}
\kappa: (B,\beta) \hookrightarrow (B^s,\beta^s), \quad \kappa(b)=b\otimes e_{1,1},
\end{equation*}
which is $\beta$-to-$\beta^s$-equivariant.\
We apply Theorem \ref{thm:uniqueness} to $\kappa \circ (\f,\bbu)$ and $\kappa \circ (\psi,\bbv)$,\footnote{Recall from Remark \ref{rem:composition} that $\kappa \circ (\f,\bbu)=(\kappa \circ \f, \kappa^{\dagger}(\bbu_{\bullet}))$.} and conclude that they are strongly asymptotically unitarily equivalent.\
Let us call by $v:[0,\infty) \to \U(\1+B^s)$ the norm-continuous path of unitaries witnessing the equivalence.
Then we have that
\begin{equation*}
\lim_{t\to\infty}\|[v_t, \1_B\otimes e_{1,1}]\|=0.
\end{equation*}
After possibly removing an initial segment of $v$, if necessary, the norm-continuous path of unitaries given by
\begin{equation*}
u_t=\kappa^{-1}\bigg((\1_B\otimes e_{1,1})v_t(\1_B\otimes e_{1,1}) \cdot |(\1_B\otimes e_{1,1})v_t(\1_B\otimes e_{1,1})|^{-1}\bigg)\in\U(B)
\end{equation*}
for $t\in[0,\infty)$, satisfies
\begin{align*}
&\lim_{t\to\infty}\|u_t\f(a)u_t^*-\psi(a)\|=0, \\
&\lim_{t\to\infty}\max_{g\in K}\|u_t\bbu_g\beta_g(u_t)^*-\bbv_g\|=0
\end{align*}
for all $a\in A$ and compact sets $K\subseteq G$.\
Moreover, one can arrange that $u_0=\1$ because $\U(B)$ is connected as a consequence of $B$ being $\O2$-stable (cf.\ \cite[Proposition 4.1.5]{Ror02} and \cite[Theorems 1.4+1.9+2.3]{Cun81}).\
As a result, $(\f,\bbu)$ and $(\psi,\bbv)$ are strongly asymptotically unitarily equivalent.
\end{proof}

\section{Existence results}\label{sec:existence}

In this section, we establish a number of technical lemmas that build up to the existence theorem, Theorem \ref{thm:existence}, which underpins our final classification result.\
Some of the intermediate results that we find on the way can be of independent interest, and may have applications elesewhere.

First, we establish useful notation that will be used throughout in this section.

\begin{remark}\label{rem:beta}
Let $\beta:G\curvearrowright B$ be an action on a \Cs-algebra.\
Recall from \cite[Corollary 3.4]{APT73} that $\M(\C_0(G,B))=\C_b^s(G,\M(B))$, where the latter is the \Cs-algebra of bounded strictly continuous functions on $G$ with values in $\M(B)$.\
Observe that $\beta$ induces a strictly-continuous $G$-action $\bar{\beta}$ on $\C_b^s(G,\M(B))$ given by
\begin{equation*}
\bar{\beta}_g(f)(h)=\beta_g(f(g^{-1}h))
\end{equation*}
for all $f\in\C_b^s(G,\M(B))$ and $g,h\in G$.
\end{remark}

\begin{notation}\label{not:psihat}
Let $\alpha:G\curvearrowright A$ and $\beta:G\curvearrowright B$ be actions on \Cs-algebras.\
Given a $\ast$-homomorphism $\psi:A\to\M(B)$, we denote by $\hat{\psi}$ the $\ast$-homomorphism from $A$ to $\M(\C_0(G,B))=\C_b^s(G,\M(B))$ given by
\begin{equation*}
\hat{\psi}(a)(g)=\beta_g(\psi(\alpha_{g^{-1}}(a)))
\end{equation*}
for all $a\in A$ and $g\in G$.\
The abuse of notation is reasonable as the actions appearing in the definition of $\hat{\psi}$ will be clear from context.\
By construction, we have that $\hat{\psi}$ is $\alpha$-to-$\bar{\beta}$-equivariant.\
In fact,
\begin{align*}
\bar{\beta}_g(\hat{\psi}(a))(h)& =\beta_g(\hat{\psi}(a)(g^{-1}h))\\
							& =\beta_g \circ \beta_{g^{-1}h} \circ \psi(\alpha_{(g^{-1}h)^{-1}}(a))\\
							& =\hat{\psi}(\alpha_g(a))(h)
\end{align*}
for all $a \in A$ and $g,h \in G$.
\end{notation}

The following lemma is well-known among experts.\
Since we struggled to pin down a direct reference, we give a short proof for the reader's convenience.

\begin{lemma} \label{lem:nuclear}
Let $X$ be a locally compact space, and $A,B$ \Cs-algebras.\
A c.p.\ map $\f:A\to\C_0(X,B)$ is nuclear if and only if $\ev_x \circ \f:A\to B$ is nuclear for all $x\in X$.
\end{lemma}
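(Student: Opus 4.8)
The plan is to treat the two implications separately, with the forward direction being routine and the converse carrying the real content.

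For the easy direction, suppose $\f$ is nuclear, so that there is a net of completely positive maps $A \xrightarrow{\psi_i} M_{n_i} \xrightarrow{\rho_i} \C_0(X,B)$ with $\rho_i \circ \psi_i \to \f$ in the point-norm topology. Since each evaluation $\ev_x : \C_0(X,B) \to B$ is a $\ast$-homomorphism, hence completely positive and contractive, the maps $\ev_x \circ \rho_i : M_{n_i} \to B$ are completely positive and $(\ev_x \circ \rho_i) \circ \psi_i = \ev_x \circ (\rho_i \circ \psi_i) \to \ev_x \circ \f$ in point-norm. This exhibits $\ev_x \circ \f$ as nuclear for every $x \in X$.

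For the converse, I would fix a finite set $\mathcal{F} \subseteq A$ of contractions and $\e > 0$ and produce a factorisation of $\f$ through a matrix algebra that is $\e$-close to $\f$ on $\mathcal{F}$. First I would exploit that each $\f(a)$ lies in $\C_0(X,B)$ to find a single compact set $K \subseteq X$ with $\|\f(a)(y)\| < \e/2$ for all $y \in X \setminus K$ and $a \in \mathcal{F}$; this is where vanishing at infinity is used to tame the non-compact part of $X$. Next, for each $x \in K$, nuclearity of $\ev_x \circ \f$ yields completely positive maps $s_x : A \to M_{n_x}$ and $t_x : M_{n_x} \to B$ with $\|t_x s_x(a) - \f(a)(x)\| < \e/4$ on $\mathcal{F}$, and continuity of $y \mapsto \f(a)(y)$ provides an open neighbourhood $U_x \ni x$ on which $\|t_x s_x(a) - \f(a)(y)\| < \e/2$ for $a \in \mathcal{F}$. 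Extracting a finite subcover $U_{x_1}, \dots, U_{x_k}$ of the compact set $K$ and choosing a partition of unity $h_1, \dots, h_k \in \C_c(X)$ subordinate to it with $\sum_{i=1}^k h_i = 1$ on $K$ (available since $X$ is locally compact Hausdorff under this paper's convention), I would set $\psi = \bigoplus_{i} s_{x_i} : A \to \bigoplus_i M_{n_{x_i}}$ and define $\rho$ on the block-diagonal matrix algebra by
\[
\rho(\xi_1,\dots,\xi_k)(y) = \sum_{i=1}^k h_i(y)\, t_{x_i}(\xi_i).
\]
Both maps are completely positive (and contractive whenever the $s_x, t_x$ are, which one may arrange), and $\rho$ lands in $\C_c(X,B) \subseteq \C_0(X,B)$ because each $h_i$ is compactly supported.

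Finally I would estimate, for $a \in \mathcal{F}$ and $y \in X$, using $\sum_{i=1}^k h_i(y) + h_0(y) = 1$ with $h_0 := 1 - \sum_i h_i$ vanishing on $K$: the bound
\[
\|\rho\psi(a)(y) - \f(a)(y)\| \le \sum_{i=1}^k h_i(y)\,\|t_{x_i}s_{x_i}(a) - \f(a)(y)\| + h_0(y)\,\|\f(a)(y)\|
\]
is at most $\e/2 + \e/2 = \e$, since $h_i(y) \neq 0$ forces $y \in U_{x_i}$ (controlling the first sum) while $h_0(y) \neq 0$ forces $y \notin K$ (controlling the second term). Taking the supremum over $y$ gives $\|\rho\psi(a) - \f(a)\| \le \e$ on $\mathcal{F}$, and letting $(\mathcal{F},\e)$ vary shows that $\f$ is nuclear. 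The main obstacle is precisely this reverse direction: the pointwise nuclear factorisations must be glued into a single global one, which forces me to combine a partition-of-unity argument over a compact core with the $\C_0$-decay estimate in order to keep uniform control across all of $X$, including its behaviour at infinity.
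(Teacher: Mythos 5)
Your proof is correct, and it takes a genuinely more self-contained route than the paper's. The paper settles the nontrivial direction softly: it invokes the known form of the completely positive approximation property of $\C_0(X)$ (citing \cite[Proposition 2.4.2]{BO08} and its proof), namely nets of c.p.c.\ maps $\kappa_\lambda:\C_0(X)\to\mathbb{C}^{n_\lambda}$ and $\psi_\lambda:\mathbb{C}^{n_\lambda}\to\C_0(X)$ with $\psi_\lambda\circ\kappa_\lambda\to\id_{\C_0(X)}$ in point-norm, where each $\kappa_\lambda$ is a finite direct sum of point evaluations. Tensoring with $\id_B$ and composing with $\f$, the maps $(\kappa_\lambda\otimes\id_B)\circ\f$ are nuclear because they are finite direct sums of the nuclear maps $\ev_{x_j}\circ\f$, and then $\f=\lim_\lambda(\psi_\lambda\otimes\id_B)\circ(\kappa_\lambda\otimes\id_B)\circ\f$ is nuclear as a point-norm limit of nuclear c.p.\ maps. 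Your argument re-derives by hand exactly the content that the paper outsources to that citation --- the compact core obtained from $\C_0$-decay, stabilization on neighbourhoods by continuity, finite subcover, partition of unity --- and interleaves it with the local factorizations of the maps $\ev_{x_i}\circ\f$, so as to produce explicit finite-dimensional factorizations of $\f$ itself. What the paper's route buys is brevity: all topological bookkeeping is absorbed into a standard reference, at the cost of using permanence properties of nuclearity (closure under finite direct sums and under point-norm limits). What your route buys is a construction from scratch that needs nothing beyond the definition, and it makes visible where the behaviour of $X$ at infinity actually enters. One cosmetic remark: your factorization passes through the finite-dimensional \Cs-algebra $\bigoplus_i M_{n_{x_i}}$ rather than a single matrix algebra, but this is harmless, since one may compose with a unital embedding $\bigoplus_i M_{n_{x_i}}\subseteq M_N$ and the corresponding conditional expectation $M_N\to\bigoplus_i M_{n_{x_i}}$, or simply note that factorizations through finite-dimensional \Cs-algebras suffice in the definition of nuclearity.
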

\begin{proof}
Only the ``if'' part is non-trivial.
It is well-known (see \cite[Proposition 2.4.2]{BO08} and its proof) that the nuclearity of $\C_0(X)$ is witnessed by two nets of c.p.c.\ maps $\kappa_\lambda: \C_0(X)\to\mathbb{C}^{n_\lambda}$ and $\psi_\lambda: \mathbb{C}^{n_\lambda}\to\C_0(X)$ with $\psi_\lambda\circ\kappa_\lambda\to\operatorname{id}_{\C_0(X)}$ in point-norm, where for each $\lambda$ the map $\kappa_\lambda$ is of the form $\kappa_\lambda=\ev_{x_1}\oplus\ev_{x_2}\oplus\dots\oplus\ev_{x_{n_\lambda}}$, for finitely many points $x_1,\dots,x_{n_\lambda}\in X$.\
By tensoring this approximate factorization with $B$, we obtain that $(\psi_\lambda\otimes\operatorname{id}_B)\circ(\kappa_\lambda\otimes\operatorname{id}_B)\to\operatorname{id}_{\C_0(X,B)}$ in point-norm.

Assuming that $\ev_x \circ \f:A\to B$ is nuclear for all $x\in X$, it thus follows that $(\kappa_\lambda\otimes\operatorname{id}_B)\circ\f$ is nuclear for every $\lambda$, since it is a finite direct sum of evaluation maps.\
We can thus conclude that $\f=\lim_\lambda (\psi_\lambda\otimes\operatorname{id}_B)\circ(\kappa_\lambda\otimes\operatorname{id}_B)\circ\f$ is the point-norm limit of nuclear c.p.\ maps, so it is itself nuclear.
\end{proof}

\begin{definition}
Let $A$ and $B$ be two \Cs-algebras.\
A c.p.\ map $\f:A\to\M(B)$ is said to be \textit{weakly nuclear} if the c.p.\ map $b^*\f(-)b:A\to B$ is nuclear for all $b\in B$.
\end{definition}

\begin{lemma}\label{lem:existence3}
Let $\alpha:G\curvearrowright A$ and $\beta:G\curvearrowright B$ be actions on \Cs-algebras.\
For any weakly nuclear $\ast$-homomorphism $\psi:A\to\M(B)$, the $\ast$-homomorphism $\hat{\psi}:A\to\M(\C_0(G,B))$ defined in Notation \ref{not:psihat} is weakly nuclear.
\end{lemma}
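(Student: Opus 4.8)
The plan is to reduce the weak nuclearity of $\hat\psi$ to a pointwise nuclearity condition via Lemma \ref{lem:nuclear}, and then exploit the elementary fact that nuclearity of a completely positive map is preserved under composition with $\ast$-homomorphisms on either side.

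Fix $f\in\C_0(G,B)$; by definition of weak nuclearity we must show that the c.p.\ map $f^*\hat\psi(-)f:A\to\C_0(G,B)$ is nuclear. Since $G$ is locally compact, Lemma \ref{lem:nuclear} tells us that it suffices to check that $\ev_g\circ\big(f^*\hat\psi(-)f\big):A\to B$ is nuclear for every $g\in G$. Here we use the identification $\M(\C_0(G,B))=\C_b^s(G,\M(B))$ recalled in Remark \ref{rem:beta}, which legitimizes evaluating the multiplier $\hat\psi(a)$ at the point $g$. Combining this with the formula from Notation \ref{not:psihat}, the map in question is
\[
a\longmapsto f(g)^*\beta_g\big(\psi(\alpha_{g^{-1}}(a))\big)f(g).
\]

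Next I would rewrite this expression so as to isolate the weak nuclearity of $\psi$. Setting $c:=\beta_{g^{-1}}(f(g))\in B$ and using that $\beta_g$ is an automorphism of $B$ (extended to $\M(B)$), one has
\[
f(g)^*\beta_g\big(\psi(\alpha_{g^{-1}}(a))\big)f(g)=\beta_g\Big(c^*\,\psi\big(\alpha_{g^{-1}}(a)\big)\,c\Big).
\]
The map $a'\mapsto c^*\psi(a')c:A\to B$ is nuclear precisely because $\psi$ is weakly nuclear. Precomposing with the automorphism $\alpha_{g^{-1}}$ of $A$ and postcomposing with the automorphism $\beta_g$ of $B$ both preserve nuclearity, since a point-norm approximate factorization through matrix algebras by c.p.c.\ maps can be composed with $\ast$-homomorphisms on either side. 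Hence $\ev_g\circ\big(f^*\hat\psi(-)f\big)$ is nuclear for each $g\in G$, and Lemma \ref{lem:nuclear} yields that $f^*\hat\psi(-)f$ is nuclear. As $f\in\C_0(G,B)$ was arbitrary, $\hat\psi$ is weakly nuclear.

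There is no serious obstacle in this argument: everything hinges on the two bookkeeping points above, namely that evaluating pointwise turns the target $\C_0(G,B)$-valued question into a family of $B$-valued questions indexed by $g$, and that conjugating by $f(g)$ \emph{inside} the evaluation reduces, after pulling the automorphism $\beta_g$ outside, to conjugating by the single element $c=\beta_{g^{-1}}(f(g))\in B$, at which point weak nuclearity of $\psi$ applies directly.
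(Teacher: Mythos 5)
Your proposal is correct and follows the paper's own argument essentially verbatim: evaluate $f^*\hat\psi(-)f$ at each $g\in G$, rewrite it as $\beta_g\bigl(\beta_{g}^{-1}(f(g))^*\,\psi(\alpha_{g^{-1}}(-))\,\beta_{g}^{-1}(f(g))\bigr)$ so that weak nuclearity of $\psi$ (together with stability of nuclearity under composition with $\ast$-homomorphisms) applies, and then invoke Lemma \ref{lem:nuclear} to pass from pointwise nuclearity to nuclearity of the $\C_0(G,B)$-valued map.
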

\begin{proof}
For each $f\in\C_0(G,B)$, we know that
\[
\ev_g(f^*\hat{\psi}(-)f)=f(g)^*(\beta_g\psi(\alpha_{g^{-1}}(-)))f(g)= \beta_g\Big( \beta_g^{-1}(f(g))^* \psi(\alpha_{g^{-1}}(-)) \beta_g^{-1}(f(g)) \Big)
\]
is nuclear for all $g\in G$.\
Hence, we may apply Lemma \ref{lem:nuclear} to conclude that $f^*\hat{\psi}(-)f$ is nuclear.\
This implies that $\hat{\psi}$ is weakly nuclear.
\end{proof}

\begin{lemma}\label{lem:existence2}
Let $\alpha:G\curvearrowright A$ and $\beta:G\curvearrowright B$ be actions on separable \Cs-algebras.\
If $\psi:A\to\M(B)$ is a $\ast$-homomorphism inducing an equivariant $\Cu$-morphism $\overline{B\psi(-)B}:(\I(A),\alpha^{\sharp})\to(\I(B),\beta^{\sharp})$, then the $\ast$-homomorphism $\hat{\psi}:A\to\M(\C_0(G,B))$ defined in Notation \ref{not:psihat} satisfies
\begin{equation*}
\overline{\C_0(G,B)\hat{\psi}(I)\C_0(G,B)}=\C_0(G,\overline{B\psi(I)B})
\end{equation*}
for all $I\in\I(A)$.
\end{lemma}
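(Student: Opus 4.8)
The plan is to establish the two inclusions separately. Throughout, write $J := \overline{B\psi(I)B} = \Phi(I)$ for the ideal assigned by the $\Cu$-morphism $\Phi = \overline{B\psi(-)B}$, and record the identity that equivariance of $\Phi$ supplies: since $\alpha^{\sharp}_{g^{-1}}(I) = \{\alpha_{g^{-1}}(a) \mid a \in I\}$, one has for every $g \in G$
\begin{equation*}
\overline{B\psi(\alpha_{g^{-1}}(I))B} = \Phi(\alpha^{\sharp}_{g^{-1}}(I)) = \beta^{\sharp}_{g^{-1}}(\Phi(I)) = \beta_{g^{-1}}(J). \tag{$\star$}
\end{equation*}
Recall from Remark \ref{rem:beta} that $\M(\C_0(G,B)) = \C_b^s(G,\M(B))$, so the functions in $\hat{\psi}(I)$ are strictly continuous multiplier-valued functions, and $\overline{\C_0(G,B)\hat{\psi}(I)\C_0(G,B)}$ is the closed linear span of the elements $q_1\hat{\psi}(a)q_2$ with $q_1,q_2\in\C_0(G,B)$ and $a\in I$.

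For the inclusion ``$\subseteq$'' I would evaluate a generator $q_1\hat{\psi}(a)q_2$ at a point $g$ and rewrite
\begin{equation*}
q_1(g)\hat{\psi}(a)(g)q_2(g) = \beta_g\big(\beta_{g^{-1}}(q_1(g))\,\psi(\alpha_{g^{-1}}(a))\,\beta_{g^{-1}}(q_2(g))\big).
\end{equation*}
The inner expression lies in $B\psi(\alpha_{g^{-1}}(I))B \subseteq \beta_{g^{-1}}(J)$ by $(\star)$; this sandwiching by elements of $B$ is exactly what lets me handle $\psi(\alpha_{g^{-1}}(a)) \in \M(B)$, which need not lie in $B$. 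Applying $\beta_g$ places the value in $J$, so the generator is an element of $\C_0(G,B)$ all of whose values lie in the closed ideal $J$, hence an element of $\C_0(G,J)$. Passing to the closed span gives $\overline{\C_0(G,B)\hat{\psi}(I)\C_0(G,B)} \subseteq \C_0(G,J)$.

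The reverse inclusion ``$\supseteq$'' is the substantive part, and the main obstacle is that the generators carry the twist $\hat{\psi}(a)(g)=\beta_g(\psi(\alpha_{g^{-1}}(a)))$, so they do not directly produce the constant fibers $\psi(I)$ that span $J$; I must instead build elements of $\C_0(G,J)$ by local approximation and patching. The key fiberwise fact, again from $(\star)$, is that for each fixed $g_0$ one has $J = \beta_{g_0}(\beta_{g_0^{-1}}(J)) = \beta_{g_0}\big(\overline{B\psi(\alpha_{g_0^{-1}}(I))B}\big)$, whence $J$ is the closed span of $\{b\,\hat{\psi}(a)(g_0)\,c \mid b,c\in B,\ a\in I\}$. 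Given $f\in\C_0(G,J)$ and $\e>0$, the set $L=\{g \mid \|f(g)\|\ge\e/2\}$ is compact; for each $g_0\in L$ I would use this fiber identity to approximate $f(g_0)$ within $\e/4$ by a finite sum $\sum_j b_j\hat{\psi}(a_j)(g_0)c_j$, and then, since $g\mapsto\sum_j b_j\hat{\psi}(a_j)(g)c_j$ is norm-continuous (multiplying a strictly continuous multiplier by fixed elements of $B$ on both sides yields norm continuity) and $f$ is continuous, shrink to a neighbourhood $U_{g_0}$ of $g_0$ on which the approximation holds within $\e/2$.

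To globalize I would extract a finite subcover $U_{g_1},\dots,U_{g_m}$ of $L$ and pick a partition of unity $\chi_1,\dots,\chi_m\in\C_c(G)$ with $\operatorname{supp}\chi_i\subseteq U_{g_i}$, $\sum_i\chi_i\le 1$, and $\sum_i\chi_i=1$ on $L$. Setting $F_i(g)=\sum_j b^{(i)}_j\hat{\psi}(a^{(i)}_j)(g)c^{(i)}_j$ and $F=\sum_i\chi_i F_i$, a routine estimate (decomposing $f-F=\sum_i\chi_i(f-F_i)+(1-\sum_i\chi_i)f$ and treating $g\in L$ and $g\notin L$ separately) yields $\|f-F\|_\infty<\e$. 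Finally, writing $\chi_i=\sqrt{\chi_i}\cdot\sqrt{\chi_i}$ with $\sqrt{\chi_i}\in\C_c(G)$ and distributing one scalar factor to each side exhibits $F=\sum_{i,j}(\sqrt{\chi_i}\,b^{(i)}_j)\,\hat{\psi}(a^{(i)}_j)\,(\sqrt{\chi_i}\,c^{(i)}_j)$ as a finite sum of generators, so $F\in\overline{\C_0(G,B)\hat{\psi}(I)\C_0(G,B)}$. Since this ideal is closed and $\e$ was arbitrary, $f$ lies in it, which proves $\C_0(G,J)\subseteq\overline{\C_0(G,B)\hat{\psi}(I)\C_0(G,B)}$ and completes the argument.
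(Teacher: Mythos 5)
Your proof is correct, but it takes a genuinely different route from the paper's. The paper's argument is a short duality computation: combining Remarks \ref{rem:prodprim} and \ref{rem:isomideals}, the ideal $\overline{\C_0(G,B)\hat{\psi}(I)\C_0(G,B)}$ is expressed as the intersection of the primitive ideals $\ker(\ev_g\otimes\pi)$ of $\C_0(G,B)\cong\C_0(G)\otimes B$ containing it, namely those with $\pi\big(\overline{B(\beta_g\circ\psi\circ\alpha_{g^{-1}}(I))B}\big)=0$; the same equivariance identity you record as $(\star)$ turns this condition into $\pi\big(\overline{B\psi(I)B}\big)=0$, and the resulting intersection is exactly $\C_0(G,\overline{B\psi(I)B})$. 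You instead prove the two inclusions directly: the forward one by fiberwise evaluation of generators (your sandwiching trick $q_1(g)\hat{\psi}(a)(g)q_2(g)=\beta_g\big(\beta_{g^{-1}}(q_1(g))\,\psi(\alpha_{g^{-1}}(a))\,\beta_{g^{-1}}(q_2(g))\big)$ is exactly the right way to handle $\psi$ taking values in $\M(B)$), and the reverse one by a compactness and partition-of-unity patching argument, which in effect reproves by hand the fact that a closed ideal of $\C_0(G,B)$ whose fibre at every point is all of $J$ must equal $\C_0(G,J)$. What each approach buys: yours is more elementary and self-contained, avoiding the identification of $\Prim(\C_0(G)\otimes B)$ with $G\times\Prim(B)$ (which rests on separability, see Remark \ref{rem:prodprim}), at the cost of length; the paper's is shorter and cleaner but imports that structural input on primitive ideal spaces of tensor products. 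Both proofs hinge on the identical use of equivariance of $\overline{B\psi(-)B}$, applied pointwise in $g$.
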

\begin{proof}
Combining Remarks \ref{rem:prodprim} and \ref{rem:isomideals} we can write the ideal generated by $\hat{\psi}(I)$ in $\C_0(G,B)$ as
\begin{equation*}
\bigcap\big\{\ker(\ev_g\otimes\pi) \mid \pi\big(\overline{B (\beta_g \circ \psi \circ \alpha_{g^{-1}}(I))B}\big)=0\big\},
\end{equation*}
where $g\in G$ and $\pi$ ranges over all non-zero irreducible representations of $B$.\
Since we assumed $\overline{B\psi(-)B}$ to be equivariant, we have that
\begin{equation*}
\overline{B (\beta_g \circ \psi \circ \alpha_{g^{-1}}(I))B}=\beta^{\sharp}_g\big(\overline{B \psi( \alpha^{\sharp}_{g^{-1}}(I))B}\big)=\overline{B\psi(I)B}
\end{equation*}
for all $g\in G$.
Hence, the ideal generated by $\hat{\psi}(I)$ in $\C_0(G,B)$ is
\begin{equation*}
\bigcap\big\{\ker(\ev_g\otimes\pi) \mid \pi(\overline{B\psi(I)B})=0\big\}=\C_0(G,\overline{B\psi(I)B}),
\end{equation*}
where $g\in G$ and $\pi$ ranges over all non-zero irreducible representations of $B$.
\end{proof}

Now we prove an equivariant analogue of \cite[Corollary 5.7]{Gab20}.\
We record two versions of the lemma.\
First a more general result, which may be of independent interest, and then a second and more restrictive variant that is specifically tailored to our need.

\begin{lemma}\label{lem:existence1}
Let $\alpha:G\curvearrowright A$ and $\beta:G\curvearrowright B$ be a pair of actions on separable $\Cs$-algebras.\
For any $\ast$-homomorphism $\psi:A\to\M(B)$ inducing an equivariant $\Cu$-morphism $\overline{B\psi(-)B}:(\I(A),\alpha^{\sharp})\to(\I(B),\beta^{\sharp})$ there exists an equivariant $\ast$-homomorphism
\begin{equation*}
\kappa:(A,\alpha)\to(\M(\K(\Hil_G)\otimes B),\Ad(\lambda)\otimes\beta)
\end{equation*}
such that
\begin{equation}\label{eq:property-lemma}
\overline{(\K(\Hil_G)\otimes B)\kappa(I)(\K(\Hil_G)\otimes B)}=\K(\Hil_G)\otimes\overline{B\psi(I)B}
\end{equation}
for all $I\in\I(A)$.

If $A$ is moreover exact, and $\psi$ is weakly nuclear, then $\kappa$ is nuclear.\
\end{lemma}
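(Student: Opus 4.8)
The plan is to obtain $\kappa$ by combining the auxiliary homomorphism $\hat\psi$ from Notation \ref{not:psihat} with a faithful multiplication representation and passing to multipliers. Let $M:\C_0(G)\to\B(\Hil_G)=\M(\K(\Hil_G))$ be the nondegenerate representation by multiplication operators and set $\rho=M\otimes\id_B:\C_0(G,B)\to\M(\K(\Hil_G)\otimes B)$. Identifying $\K(\Hil_G)\otimes B$ with the compact operators on the Hilbert $B$-module $L^2(G,B)$ and $\M(\K(\Hil_G)\otimes B)$ with the adjointable operators $\mathcal{L}_B(L^2(G,B))$, the map $\rho$ acts by diagonal (pointwise) multiplication and is nondegenerate, since $\overline{M(\C_0(G))\K(\Hil_G)}=\K(\Hil_G)$. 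Hence $\rho$ extends uniquely to a strictly continuous unital $\ast$-homomorphism $\bar\rho:\M(\C_0(G,B))\to\M(\K(\Hil_G)\otimes B)$, and I would define $\kappa=\bar\rho\circ\hat\psi$; explicitly $\kappa(a)$ is the multiplication operator $(\kappa(a)\xi)(g)=\beta_g(\psi(\alpha_{g^{-1}}(a)))\xi(g)$.

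For equivariance I would first verify that $\rho$ intertwines the action $\bar\beta$ on $\C_0(G,B)$ from Remark \ref{rem:beta} with $\Ad(\lambda)\otimes\beta$. The essential computation is $\Ad(\lambda_g)(M(\phi))=M(\lambda_g\phi)$ for $\phi\in\C_0(G)$, where $(\lambda_g\phi)(h)=\phi(g^{-1}h)$; tensoring with $\beta$ gives $\rho\circ\bar\beta_g=(\Ad(\lambda)\otimes\beta)_g\circ\rho$ on $\C_0(G,B)$. Since both $\bar\beta$ and $\Ad(\lambda)\otimes\beta$ extend strictly continuously to the relevant multiplier algebras and $\bar\rho$ is strictly continuous, this intertwining passes to $\bar\rho$. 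As $\hat\psi$ is $\alpha$-to-$\bar\beta$-equivariant by Notation \ref{not:psihat}, the composite $\kappa=\bar\rho\circ\hat\psi$ is $\alpha$-to-$(\Ad(\lambda)\otimes\beta)$-equivariant.

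For the ideal property, write $E=\K(\Hil_G)\otimes B$ and $J=\overline{B\psi(I)B}$ for a fixed $I\in\I(A)$. By Lemma \ref{lem:existence2} the ideal of $\C_0(G,B)$ generated by $\hat\psi(I)$ is $\C_0(G,J)$, and a direct computation using $\overline{\K(\Hil_G)M(\C_0(G))\K(\Hil_G)}=\K(\Hil_G)$ together with $\overline{BJB}=J$ shows $\overline{E\rho(\C_0(G,J))E}=\K(\Hil_G)\otimes J$, the desired right-hand side. It then remains to identify the ideal generated by $\kappa(I)$ with $\overline{E\rho(\C_0(G,J))E}$. The inclusion ``$\supseteq$'' is immediate, since for $f_1,f_2\in\C_0(G,B)$ and $a\in I$ one has $\rho(f_1)\kappa(a)\rho(f_2)=\rho(f_1\hat\psi(a)f_2)\in\rho(\C_0(G,J))$. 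For ``$\subseteq$'', given $a\in I$ and $e_1,e_2\in E$, I would use an approximate unit $(u_\nu)$ of $\C_0(G,B)$: since $\rho(u_\nu)\to\1$ strictly, $e_1\kappa(a)e_2=\lim_\nu e_1\rho(u_\nu\hat\psi(a)u_\nu)e_2$ with $u_\nu\hat\psi(a)u_\nu\in\C_0(G,J)$, placing each term in $\overline{E\rho(\C_0(G,J))E}$. This yields the claimed equality \eqref{eq:property-lemma}. I expect this transfer of the ideal identity through the nondegenerate representation and its strict extension to be the main obstacle, as the ``$\subseteq$'' inclusion genuinely requires the strict-continuity manipulation rather than a purely algebraic argument.

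For nuclearity, assume $A$ is exact and $\psi$ is weakly nuclear, so that $\hat\psi$ is weakly nuclear by Lemma \ref{lem:existence3}. I would first check that $\kappa=\bar\rho\circ\hat\psi$ is weakly nuclear: for $e\in E$, approximate $e=\rho(f)e'$ using nondegeneracy of $\rho$, and note that $e^*\kappa(-)e$ is a cb-norm limit of the maps $(e')^*\rho(f^*\hat\psi(-)f)e'$, each nuclear because $f^*\hat\psi(-)f$ is nuclear and nuclearity is preserved under composition with the $\ast$-homomorphism $\rho$ and compression. Finally, since $A$ is exact, a weakly nuclear completely positive map out of $A$ into a multiplier algebra is automatically nuclear, a standard consequence of the nuclear-embeddability characterisation of exactness (cf.\ \cite{BO08}); hence $\kappa$ is nuclear.
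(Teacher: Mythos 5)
Your proposal is correct and takes essentially the same route as the paper: $\kappa$ is the composition of $\hat{\psi}$ with the strict extension of the multiplication representation $\pi\otimes\id_B$, equivariance and the ideal identity \eqref{eq:property-lemma} are obtained exactly as in the paper (via Lemma \ref{lem:existence2} and nondegeneracy of the multiplication representation), and nuclearity comes from Lemma \ref{lem:existence3} combined with the fact that, for exact $A$, weak nuclearity implies nuclearity. The only cosmetic difference is that the paper applies this last fact, citing \cite[Proposition 3.2]{Gab22}, to $\hat{\psi}$ and then composes with the $\ast$-homomorphism, whereas you verify weak nuclearity of $\kappa$ itself and apply it there; your somewhat vague appeal to \cite{BO08} for that fact would be better replaced by the precise reference \cite[Proposition 3.2]{Gab22}.
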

\begin{proof}
Let us denote by $\pi:\C_0(G) \hookrightarrow \B(L^2(G))$ the representation of $\C_0(G)$ as multiplication operators on $L^2(G)$.
Note that the $\ast$-homomorphism
\begin{equation*}
\C_b^s(G,\M(B))=\M(\C_0(G)\otimes B) \lhook\joinrel\xrightarrow{\pi \otimes \id_B} \M(\K\otimes B)
\end{equation*}
is equivariant with respect to $\bar{\beta}$ and $\Ad(\lambda)\otimes\beta$.

We may now start the construction of an $\alpha$-to-($\Ad(\lambda)\otimes\beta$) equivariant map $A \to \M(\K(\Hil_G)\otimes B)$ satisfying \eqref{eq:property-lemma}.\
Consider the $\alpha$-to-$\bar{\beta}$-equivariant $\ast$-homomorphism $\hat{\psi}:A\to \M(\C_0(G,B))$ given as in Notation \ref{not:psihat}.\
Define an $\alpha$-to-$\Ad(\lambda)\otimes\beta$-equivariant $\ast$-homomorphism given by
\begin{equation*}
\kappa=(\pi\otimes\id_B) \circ \hat{\psi}: A\to\M(\K(\Hil_G)\otimes B).
\end{equation*}
Observing that $\kappa(I)$ and $(\pi\otimes\id_B)(\overline{\C_0(G,B)\hat{\psi}(I)\C_0(G,B)})$ generate the same ideal in $\K(\Hil_G)\otimes B$, and by Lemma \ref{lem:existence2}
\begin{equation*}
\overline{\C_0(G,B)\hat{\psi}(I)\C_0(G,B)}=\C_0(G,\overline{B\psi(I)B}),
\end{equation*}
we may conclude that $\kappa(I)$ generates the same ideal as $\pi(\C_0(G))\otimes\overline{B\psi(I)B}$ in $\K(\Hil_G)\otimes B$, which is $\K(\Hil_G) \otimes \overline{B\psi(I)B}$.

For the moreover part, assume that $\psi$ is weakly nuclear and $A$ exact.\
By Lemma \ref{lem:existence3}, it follows that $\hat{\psi}$ is weakly nuclear.\
We infer from \cite[Proposition 3.2]{Gab22} that $\hat{\psi}$ is nuclear if and only if it is weakly nuclear.\
In particular, this implies that $\kappa$ is nuclear.
\end{proof}

The following result is an adaptation of the previous lemma to a more specific setting that will ultimately lead us to the existence theorem at the end of this section.\
It is important to note that the central ingredient of our proof is \cite[Theorem 14.1]{Gab21}, which ultimately uses \cite[Lemma 4.5]{BGSW22}.

\begin{lemma}\label{lem:existence}
Let $\alpha:G\curvearrowright A$ be an action on a separable, exact $\Cs$-algebra, and $\beta:G\curvearrowright B$ an action on a separable $\Cs$-algebra.\
Then, for every equivariant $\Cu$-morphism $\Phi: (\I(A),\alpha^{\sharp})\to(\I(B),\beta^{\sharp})$, there exists an equivariant, nuclear $\ast$-homomorphism
\begin{equation*}
\kappa:(A,\alpha)\to(\M(\K(\Hil^\infty_G)\otimes B),\Ad(\lambda^\infty)\otimes\beta)
\end{equation*}
such that
\begin{equation}\label{eq:existence}
\overline{(\K(\Hil^\infty_G)\otimes B)\kappa(I)(\K(\Hil^\infty_G)\otimes B)}=\K(\Hil^\infty_G)\otimes\Phi(I)
\end{equation}
for all $I\in\I(A)$.
\end{lemma}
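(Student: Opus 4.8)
The plan is to reduce to the already-established equivariant Lemma \ref{lem:existence1} after first realizing the abstract $\Cu$-morphism $\Phi$ by a concrete, weakly nuclear $\ast$-homomorphism; this realization is the only genuinely hard step. Concretely, I would invoke \cite[Theorem 14.1]{Gab21} (whose proof ultimately relies on \cite[Lemma 4.5]{BGSW22}) for the separable, stable \Cs-algebra $\K\otimes B$: since $A$ is separable and exact and $\Phi$ defines a $\Cu$-morphism $\I(A)\to\I(B)\cong\I(\K\otimes B)$, this yields a weakly nuclear $\ast$-homomorphism $\psi:A\to\M(\K\otimes B)$ whose induced map on ideal lattices is $\Phi$; explicitly,
\begin{equation*}
\overline{(\K\otimes B)\psi(I)(\K\otimes B)}=\K\otimes\Phi(I)\quad\text{for all }I\in\I(A),
\end{equation*}
under the canonical identification $\I(\K\otimes B)\cong\I(B)$. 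Passing to the stabilization $\K\otimes B$ is precisely what makes \cite[Theorem 14.1]{Gab21} applicable here. Note that $\psi$ itself need not be equivariant; only the induced $\Cu$-morphism, which is $\Phi$, is equivariant, and this is exactly the hypothesis required downstream.

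Second, I would feed $\psi$ into Lemma \ref{lem:existence1}, applied with $(B,\beta)$ replaced by the separable system $(\K\otimes B,\id_{\K}\otimes\beta)$. Under $\I(\K\otimes B)\cong\I(B)$, the Cu-morphism induced by $\psi$ equals $\Phi$ and is therefore $\alpha^{\sharp}$-to-$\beta^{\sharp}$-equivariant, so the hypothesis of Lemma \ref{lem:existence1} holds; moreover $A$ is exact and $\psi$ is weakly nuclear, so its ``moreover'' part yields an equivariant, nuclear $\ast$-homomorphism
\begin{equation*}
\kappa_0:(A,\alpha)\to\big(\M(\K(\Hil_G)\otimes\K\otimes B),\ \Ad(\lambda)\otimes\id_{\K}\otimes\beta\big)
\end{equation*}
satisfying $\overline{(\K(\Hil_G)\otimes\K\otimes B)\kappa_0(I)(\K(\Hil_G)\otimes\K\otimes B)}=\K(\Hil_G)\otimes\K\otimes\Phi(I)$ for all $I\in\I(A)$.

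Finally, it remains only to match the codomain with the one in the statement. Here I would use the flip unitary $\Sigma:\Hil_G\hat{\otimes}\ell^2(\N)\to\ell^2(\N)\hat{\otimes}\Hil_G=\Hil_G^\infty$, which intertwines $\lambda\otimes\1_{\ell^2(\N)}$ with $\1_{\ell^2(\N)}\otimes\lambda=\lambda^\infty$. Conjugation by $\Sigma$ thus gives a $G$-equivariant isomorphism
\begin{equation*}
\big(\K(\Hil_G)\otimes\K\otimes B,\ \Ad(\lambda)\otimes\id_{\K}\otimes\beta\big)\xrightarrow{\ \cong\ }\big(\K(\Hil_G^\infty)\otimes B,\ \Ad(\lambda^\infty)\otimes\beta\big),
\end{equation*}
which extends to the multiplier algebras and carries $\K(\Hil_G)\otimes\K\otimes\Phi(I)$ to $\K(\Hil_G^\infty)\otimes\Phi(I)$. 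Composing $\kappa_0$ with the multiplier extension of this isomorphism produces the desired equivariant, nuclear $\ast$-homomorphism $\kappa$ satisfying \eqref{eq:existence}.

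I expect the main obstacle to be entirely concentrated in the first step: everything afterwards is formal bookkeeping with Lemma \ref{lem:existence1} and a flip, but extracting a \emph{weakly nuclear} lift of the abstract $\Phi$ requires the full strength of the ideal-related $\O2$-embedding machinery of \cite{Gab21,BGSW22}, and care is needed to confirm that its hypotheses (separability and stability of $\K\otimes B$, exactness of $A$) are met and that the output is genuinely weakly nuclear rather than merely a $\ast$-homomorphism.
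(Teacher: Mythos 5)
Your proposal is correct in outline and follows essentially the same route as the paper: realize $\Phi$ non-equivariantly via \cite[Theorem 14.1]{Gab21}, feed the resulting map into Lemma \ref{lem:existence1} (whose hypothesis, exactly as you note, only requires the \emph{induced} $\Cu$-morphism to be equivariant, not the map itself), and then repair the codomain. Your flip-unitary identification of $(\K(\Hil_G)\otimes\K\otimes B,\Ad(\lambda)\otimes\id_{\K}\otimes\beta)$ with $(\K(\Hil_G^\infty)\otimes B,\Ad(\lambda^\infty)\otimes\beta)$ is a perfectly valid substitute for the paper's endgame, and the nuclearity bookkeeping goes through as you say.

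The one point needing repair is how you quote the key external input. As the paper uses it, \cite[Theorem 14.1]{Gab21} produces a \emph{nuclear} $\ast$-homomorphism $\psi_0\colon A\to\Oinf\otimes B$ with $\overline{(\Oinf\otimes B)\psi_0(I)(\Oinf\otimes B)}=\Oinf\otimes\Phi(I)$ for all $I\in\I(A)$; the target is $\Oinf\otimes B$ itself, and no stabilization of the target enters. Your claimed statement --- a weakly nuclear $\psi\colon A\to\M(\K\otimes B)$ with $\overline{(\K\otimes B)\psi(I)(\K\otimes B)}=\K\otimes\Phi(I)$ --- is true, but it is a consequence rather than the literal conclusion: take a unital embedding $\iota\colon\Oinf\hookrightarrow\B(\ell^2(\N))=\M(\K)$ and set $\psi=(\iota\otimes\id_B)\circ\psi_0$; since $\Oinf$ is simple and unital, every nonzero $x\in\iota(\Oinf)$ satisfies $\overline{\K x\K}=\K$, which yields the displayed ideal identity, and $\psi$ is nuclear (hence weakly nuclear) as the composition of a nuclear map with a $\ast$-homomorphism. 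In particular, your remark that ``passing to the stabilization $\K\otimes B$ is precisely what makes Theorem 14.1 applicable'' has it backwards: what powers the non-equivariant realization is the properly infinite tensor factor $\Oinf$, not stability of the target. With this one-line bridge inserted (or, alternatively, quoting the theorem in the paper's form, after which the paper's own device --- composing with $\id_{\K(\Hil_G)}\otimes\iota\otimes\id_B$ and using $\K(\Hil_G)\otimes\B(\ell^2(\N))\subseteq\M(\K(\Hil_G^\infty))$ --- replaces your flip), your argument is complete and agrees with the paper's up to bookkeeping.
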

\begin{proof}
By \cite[Theorem 14.1]{Gab21} there exists a nuclear $\ast$-homomorphism $\psi:A\to \Oinf\otimes B$ such that $\overline{(\Oinf\otimes B)\psi(I)(\Oinf\otimes B)}=\Oinf\otimes\Phi(I)$ for all $I\in\I(A)$.\
Hence, we may apply Lemma \ref{lem:existence1} and get an $\alpha$-to-$(\Ad(\lambda)\otimes\id_{\Oinf}\otimes\beta)$-equivariant $\ast$-homomorphism $\kappa_1:A\to\M(\K(\Hil_G)\otimes\Oinf\otimes B)$ that satisfies property \eqref{eq:property-lemma} with $B$ replaced by $\Oinf\otimes B$.

We may now pick a unital copy of $\Oinf$ inside $\B(\ell^2(\N))$, and denote by $\iota$ the unital inclusion $\Oinf \hookrightarrow \B(\ell^2(\N))$.
Then, we have that
\begin{equation*}
\K(\Hil_G)\otimes\Oinf \xhookrightarrow{\id_{\K(\Hil_G)}\otimes\iota} \K(\Hil_G)\otimes\B(\ell^2(\N)) \subseteq \B(\Hil^\infty_G).
\end{equation*}
Observe that $\id\otimes\iota$ is equivariant with respect to $\Ad(\lambda)\otimes\id_{\Oinf}$ and $\Ad(\lambda^\infty)$.\
The map given by $\kappa=(\id_{\K(\Hil_G)}\otimes\iota\otimes\id_B) \circ \kappa_1$ is an $\alpha$-to-$(\Ad(\lambda^\infty)\otimes\beta)$-equivariant $\ast$-homomorphism satisfying
\eqref{eq:existence}.

Since $A$ is exact and $\psi$ is weakly nuclear, it follows from the moreover part of Lemma \ref{lem:existence1} that $\kappa_1$ is nuclear.\
This implies that $\kappa$ is nuclear as well.
\end{proof}

\begin{remark}\label{rem:cptinclusion}
It was observed in \cite[Remark 4.5]{GS22b} that there exists an equivariant embedding $\iota:(\K(\Hil^\infty_G),\Ad(\lambda^\infty))\hookrightarrow(\Oinf,\gamma)$, given on rank one operators\footnote{A rank one operator $E_{\xi,\eta}$ for $\xi,\eta\in\Hil_G^\infty\setminus\{0\}$, is defined by $E_{\xi,\eta}(\zeta)=\eta \cdot \langle \xi , \zeta \rangle$ for all $\zeta\in\Hil_G^\infty$.} by $\iota(E_{\xi,\eta})=\mathfrak{s}(\xi)\mathfrak{s}(\eta)^*$ for all $\xi,\eta$ non-zero vectors in $\Hil_G^\infty$.
\end{remark}

In the following lemma we establish a general property of separable \Cs-algebras that shows that it is enough to know a fullness-type condition on a countable set in order for it to be true on the entire set of positive elements.\
We note that the countable subset defined in the lemma is independent of the \Cs-algebra $B$ and the $\ast$-homomorphism $\f:A\to B$.

\begin{lemma}\label{lem:countabledense}
Let $A$ be a \Cs-algebra, $S\subset A_+$ a set of positive elements that admits a countable dense subset $Q_0\subseteq S$, and
\begin{equation*}
Q=\bigg\{\frac{(q-\e)_+}{\|(q-\e)_+\|} \in A_+ \mid q\in Q_0,\ \e\in\mathbb{Q},\ 0<\e<\|q\|\bigg\}.
\end{equation*}
Consider the following statements for any \Cs-algebra $B$, $\Cu$-morphism $\Phi:\I(A)\to\I(B)$, and $\ast$-homomorphism $\f:A\to B$.
\begin{enumerate}[label=\textup{(\roman*)}, leftmargin=*]
\item $\f(q)$ is a full element of $\Phi(\overline{AqA})$ for all $q\in Q$,\label{item:i}
\item $\f(a)$ is a full element of $\Phi(\overline{AaA})$ for all $a\in S$.\label{item:ii}
\end{enumerate}
Then, \ref{item:i}$\Longrightarrow$\ref{item:ii}.
\end{lemma}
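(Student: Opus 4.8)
The plan is to fix a single $a\in S$ (the case $a=0$ being trivial since then $\overline{AaA}=\{0\}$) and to verify directly that $\overline{B\f(a)B}=\Phi(\overline{AaA})$; together with the automatic membership $\f(a)\in\overline{B\f(a)B}$, this is exactly the assertion that $\f(a)$ is a full element of $\Phi(\overline{AaA})$. The engine of the argument is a Cuntz-comparison sandwich that, for each small $\e$, produces a truncation of some $q\in Q_0$ lying (up to normalisation) in $Q$ and squeezed between $(a-\e)_+$ and $a$ in the Cuntz order. Throughout I would only use that $\Phi$ is monotone and preserves suprema of increasing sequences, and that the $\ast$-homomorphism $\f$ preserves Cuntz subequivalence.

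Concretely, given a small rational $\e=1/n$ with $\e<\|a\|$ (which holds for all large $n$), I would choose a rational $\delta$ with $0<\delta<\e/3$ and, by density of $Q_0$ in $S$, an element $q\in Q_0$ with $\|a-q\|<\delta$. Putting $\e':=\e-\delta$, one has $\e'\in\mathbb{Q}$ and $0<\e'<\|q\|$, so that $q':=(q-\e')_+$ satisfies $q'/\|q'\|\in Q$ and generates the same ideal as $q'$. The key comparisons are
\[
(a-\e)_+\precsim q'\precsim a.
\]
Both follow from \cite[Lemma 2.5(ii)]{KR00} (equivalently \cite[Lemma 2.4]{Ror92}), which asserts $(x-\delta)_+\precsim y$ whenever $\|x-y\|\le\delta$, combined with the elementary fact that $t\mapsto(t-s)_+$ is $1$-Lipschitz. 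Applying the comparison lemma to the pair $\big((a-\e')_+,(q-\e')_+\big)$, whose members differ by at most $\|a-q\|<\delta$, gives $(a-\e'-\delta)_+\precsim(q-\e')_+$, i.e.\ $(a-\e)_+\precsim q'$ since $\e'+\delta=\e$; applying it to $\big((q-(\e'-\delta))_+,(a-(\e'-\delta))_+\big)$ gives $(q-\e')_+\precsim(a-(\e'-\delta))_+\precsim a$. Passing to generated ideals yields the chain $\overline{A(a-\e)_+A}\subseteq\overline{Aq'A}\subseteq\overline{AaA}$ in $\I_\sigma(A)$.

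With the sandwich in hand I would conclude in two inclusions. For $\Phi(\overline{AaA})\subseteq\overline{B\f(a)B}$: applying $\Phi$ to the chain and invoking hypothesis \ref{item:i} (which gives $\Phi(\overline{Aq'A})=\overline{B\f(q')B}$, the normalisation of $q'$ being harmless), together with $q'\precsim a\Rightarrow\f(q')\precsim\f(a)\Rightarrow\overline{B\f(q')B}\subseteq\overline{B\f(a)B}$, shows $\Phi(\overline{A(a-1/n)_+A})\subseteq\overline{B\f(a)B}$ for all large $n$; since $\Phi$ preserves the supremum of the increasing sequence $\big(\overline{A(a-1/n)_+A}\big)_n$, whose supremum is $\overline{AaA}$ (as in the proof of Proposition \ref{prop:Cu}), the closed ideal $\overline{B\f(a)B}$ contains $\Phi(\overline{AaA})$. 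For the reverse inclusion: from $(a-\e)_+\precsim q'$ I obtain $\f((a-\e)_+)\precsim\f(q')$, hence $\f((a-\e)_+)\in\overline{B\f(q')B}=\Phi(\overline{Aq'A})\subseteq\Phi(\overline{AaA})$ by \ref{item:i} and monotonicity; letting $\e\to0$, using $(a-\e)_+\to a$ in norm and that $\Phi(\overline{AaA})$ is closed, gives $\f(a)\in\Phi(\overline{AaA})$, whence $\overline{B\f(a)B}\subseteq\Phi(\overline{AaA})$.

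I expect the sandwich step to be the only genuinely delicate point: one must manufacture a member of $Q$ whose generated ideal is wedged between those of $(a-\e)_+$ and $a$, and the rationality and normalisation constraints built into the definition of $Q$ make the bookkeeping of $\delta$ and $\e'$ the crux. Everything downstream—preservation of increasing suprema by the $\Cu$-morphism $\Phi$ and preservation of Cuntz subequivalence by $\f$—is routine, so the write-up should be short once these comparison estimates are recorded.
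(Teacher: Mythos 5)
Your proposal follows the same route as the paper's own proof: approximate $a\in S$ by elements of $Q_0$, wedge a normalised truncation of such an element between $(a-\e)_+$ and $a$, feed it into hypothesis (i), and conclude via monotonicity and preservation of increasing suprema of $\Phi$ for one inclusion, and closedness of $\Phi(\overline{AaA})$ for the other. All of that is correct. The step that fails as written is your justification of the sandwich: you claim $\|(a-\e')_+-(q-\e')_+\|\le\|a-q\|$ on the grounds that $t\mapsto(t-\e')_+$ is $1$-Lipschitz, and then apply R{\o}rdam's lemma to this pair. Scalar Lipschitz bounds do not pass to the functional calculus of non-commuting elements: the truncation function (equivalently, the positive part or the absolute value) is the classical example of a Lipschitz function that is \emph{not} operator Lipschitz. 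Indeed, there exist self-adjoint matrices $X,Y$ with $\|(X)_+-(Y)_+\|>\|X-Y\|$; adding a large multiple of the unit to make them positive and then rescaling produces positive elements $a,q$ with $\|(a-\e')_+-(q-\e')_+\|>\|a-q\|$ for any prescribed $\e'>0$. So the norm estimate you invoke is false in a general \Cs-algebra, and both applications of R{\o}rdam's lemma in your ``key comparisons'' are unjustified as stated.

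The comparisons $(a-\e)_+\precsim(q-\e')_+\precsim a$ are nevertheless true, and the repair is short: they are exactly what \cite[Lemma 2.5(ii)+(iii)]{KR00} (cf.\ \cite[Lemma 2.4]{Ror92}) yields when applied to $\|a-q\|<\delta$, and this is precisely the citation the paper's proof uses at this point. Alternatively, one can argue order-theoretically: $\|a-q\|<\delta$ gives $a\le q+\delta\1$ and $q\le a+\delta\1$ in $A^\dagger$, hence $a-\e=a-\e'-\delta\le q-\e'$ and $q-\e'\le a+(\delta-\e')\le a$, and for self-adjoint $x\le y$ one always has $(x)_+\precsim(y)_+$ (conjugate $x\le(y)_+$ by $g(x)$, where $g(t)=\sqrt{(t-\sigma)_+/t}$ for $t>0$ and $g(t)=0$ otherwise, to get $(x-\sigma)_+\le g(x)(y)_+g(x)\precsim(y)_+$ for every $\sigma>0$). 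With the sandwich justified in either of these ways, the rest of your argument goes through verbatim and then coincides in structure with the paper's proof, which likewise chooses $q_n\in Q_0$ converging to $a$, rationals $\e<\delta$, derives $\overline{A(a-\delta)_+A}\subseteq\overline{A(q_n-\e)_+A}\subseteq\overline{AaA}$ for large $n$, applies (i) to $(q_n-\e)_+/\|(q_n-\e)_+\|\in Q$, and runs the same two inclusions.
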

\begin{proof}
Fix a \Cs-algebra $B$, a $\Cu$-morphism $\Phi:\I(A)\to\I(B)$, and a $\ast$-homomorphism $\f:A\to B$, and suppose that $\f(q)$ is a full element of $\Phi(\overline{AqA})$ for all $q\in Q$.\
Pick a positive element $a\in S$, and a sequence $(q_n)_{n\in\N}$ in $Q_0$ such that $\lim_{n\to\infty}q_n=a$.\
For any $\delta\in\mathbb{Q}_{>0}$, we may find $\e\in\mathbb{Q}_{>0}$ with $\delta>\e$, and by Remark \ref{rem:RordamLemma}(ii) there exists $n_\delta\in\N$ for which
\begin{equation*}
(a-\delta)_+ \in \overline{A(q_n-\e)_+A} \subseteq \overline{AaA}
\end{equation*}
for all $n\geq n_\delta$.
By assumption, $\frac{1}{\|(q_n-\e)_+\|}\f((q_n-\e)_+)$ is a full element of $\Phi\bigg(\overline{A\frac{(q_n-\e)_+}{\|(q_n-\e)_+\|}A}\bigg)=\Phi(\overline{A(q_n-\e)_+A})$ for all $n\in\N$, hence $\f((q_n-\e)_+)$ is also a full element of $\Phi(\overline{A(q_n-\e)_+A})$ for all $n\in\N$.\
Therefore, we have that
\begin{equation*}
\Phi(\overline{A(a-\delta)_+A}) \subseteq \Phi(\overline{A(q_n-\e)_+A})=\I_{\sigma}(\f)(\overline{A(q_n-\e)_+A}) \subseteq \I_{\sigma}(\f)(\overline{AaA})
\end{equation*}
for all $n\geq n_\delta$.\
As a consequence, the ideal generated by $\f(a)$ in $B$ contains $\Phi(\overline{A(a-\delta)_+A})$ for any given $\delta\in\mathbb{Q}_{>0}$.\
Moreover, we know that $\overline{B\f(a)B}$ contains $\Phi(\overline{AaA})=\overline{\bigcup_{\delta\in\mathbb{Q}_{>0}}\Phi(\overline{A(a-\delta)_+A})}$ as well, where in the last identity we used that $\Phi$ preserves countable increasing suprema.\
Note that
\begin{equation*}
\f((a-\delta)_+) \in \I_{\sigma}(\f)(\overline{A(q_n-\e)_+A})=\Phi(\overline{A(q_n-\e)_+A}) \subseteq \Phi(\overline{AaA})
\end{equation*}
for all $n\geq n_\delta$.
This implies that $(\f(a)-\delta)_+$ belongs to $\Phi(\overline{AaA})$ for all $\delta\in\mathbb{Q}_{>0}$, and therefore
\begin{equation*}
\overline{B\f(a)B} = \overline{\bigcup_{\delta\in\mathbb{Q}_{>0}}\overline{B(\f(a)-\delta)_+B}} \subseteq \Phi(\overline{AaA}).
\end{equation*}
The two observations together give that $\overline{B\f(a)B}=\Phi(\overline{AaA})$.\
In particular, $\f(a)$ is a full element of $\Phi(\overline{AaA})$.
\end{proof}

\begin{definition}
Let $A$ and $B$ be \Cs-algebras.\
We say that a c.p.c.\ map $\f:A\to B_{\infty}$ is \textit{nuclearly liftable} if it admits a lift $(\f_n)_{n\in\N}:A\to \prod_{\N}B$ consisting of nuclear c.p.c.\ maps.

Suppose that $B$ is equipped with an action $\beta:G\curvearrowright B$.\
Then, we say that a c.p.c.\ map $\f:A\to B_{\infty,\beta}$ is nuclearly liftable if it is nuclearly liftable as a map with range in $B_\infty$.
\end{definition}

\begin{remark}\label{rem:nuclift}
Let $A$ and $B$ be \Cs-algebras, and assume that $A$ is separable and exact.\
By combining \cite[Proposition 3.3]{Dad97} and the Choi--Effros lifting theorem, \cite[Theorem 3.10]{CE76}, a c.p.c.\ map $\f:A\to B_{\infty}$ is nuclearly liftable precisely when it is nuclear.
\end{remark}

\begin{lemma}\label{lem:seqargument}
Let $\alpha:G\curvearrowright A$ be an action on a separable, exact \Cs-algebra, and $\beta:G\curvearrowright B$ an equivariantly $\O2$-stable and isometrically shift-absorbing action on a separable \Cs-algebra.\
Let $\Phi:(\I(A),\alpha^{\sharp})\to(\I(B),\beta^{\sharp})$ be an equivariant $\Cu$-morphism.\
Assume that there exists a nuclearly liftable, equivariant $\ast$-homomorphism $\f:A\to (B\otimes\K(\Hil_G^\infty)\otimes\O2)_{\infty,\beta\otimes\Ad(\lambda^\infty)\otimes\id_{\O2}}$ with the property that $\f(a)$ is a full element of $\I(\iota_\infty) (\Phi(\overline{AaA})\otimes\K(\Hil_G^\infty)\otimes\O2)$ for all $a\in A_+$.\footnote{Recall from Definition \ref{def:sequencealgebra} that $\iota_{\infty}$ denotes the embedding into the sequence algebra, not its continuous part.}
Then, there exists a nuclearly liftable, equivariant $\ast$-homomorphism $\psi:A\to B_{\infty,\beta}$ such that $\psi(a)$ is a full element of $\I(\iota_\infty)(\Phi(\overline{AaA}))$ for all $a\in A_+$.
\end{lemma}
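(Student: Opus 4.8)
The plan is to absorb the auxiliary tensor factors $C:=\K(\Hil_G^\infty)\otimes\O2$ into $B$ by means of an equivariant embedding of $C$ into the central sequence algebra of $B$. Write $\delta:=\Ad(\lambda^\infty)\otimes\id_{\O2}:G\curvearrowright C$. First I would produce an equivariant $\ast$-homomorphism $\mu:(C,\delta)\to(F_{\infty,\beta}(B),\tilde{\beta}_\infty)$ as follows. Isometric shift-absorption of $\beta$ gives, via Remark \ref{rem:isa}, a unital equivariant $\ast$-homomorphism $(\Oinf,\gamma)\to(F_{\infty,\beta}(B),\tilde{\beta}_\infty)$; precomposing with the equivariant inclusion $(\K(\Hil_G^\infty),\Ad(\lambda^\infty))\hookrightarrow(\Oinf,\gamma)$ of Remark \ref{rem:cptinclusion} yields an equivariant $\ast$-homomorphism $\kappa_1:(\K(\Hil_G^\infty),\Ad(\lambda^\infty))\to(F_{\infty,\beta}(B),\tilde{\beta}_\infty)$. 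On the other hand, equivariant $\O2$-stability of $\beta$ gives, through the equivariant McDuff-type characterisation of $\mathcal{D}$-stability (cf.\ \cite{Sza18a}), a unital equivariant $\ast$-homomorphism $\kappa_2:(\O2,\id_{\O2})\to(F_{\infty,\beta}(B),\tilde{\beta}_\infty)$ whose image lies in the fixed-point algebra for $\tilde{\beta}_\infty$. A standard reindexing argument inside the central sequence algebra lets me arrange the ranges of $\kappa_1$ and $\kappa_2$ to commute, so that $\mu(k\otimes x):=\kappa_1(k)\kappa_2(x)$ defines the desired equivariant embedding of $C$ (the two structural hypotheses on $\beta$ enter precisely here, the $\O2$-factor carrying the trivial action being the reason equivariant $\O2$-stability is needed to keep $\mu$ equivariant).

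Next I would build the transfer map. Choosing a set-theoretic lift $\tilde\mu:C\to B_{\infty,\beta}\cap B'$ of $\mu$, the assignment $b\otimes c\mapsto b\,\tilde\mu(c)$ gives a well-defined equivariant $\ast$-homomorphism $\Lambda_0:(B\otimes C,\beta\otimes\delta)\to(B_{\infty,\beta},\beta_\infty)$: it is multiplicative because the ambiguity of the lift lies in the annihilator $B_{\infty,\beta}\cap B^\perp$, which is killed upon multiplication by $b\in B$, and it extends from the algebraic to the spatial tensor product since $C$ is nuclear. A reindexing/diagonalisation argument then promotes $\Lambda_0$ to an equivariant $\ast$-homomorphism $\Lambda$ on the separable subalgebra generated by $\f(A)$ inside $(B\otimes C)_{\infty,\beta\otimes\delta}$, still restricting to $\Lambda_0$ on constant sequences. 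I then set $\psi:=\Lambda\circ\f:A\to B_{\infty,\beta}$, which is equivariant since both $\f$ and $\Lambda$ are.

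Nuclear liftability of $\psi$ is then immediate: since $A$ is separable and exact, $\f$ is nuclear (being nuclearly liftable), and $\psi=\Lambda\circ\f$ is the composition of a nuclear c.p.\ map with a $\ast$-homomorphism, hence nuclear; by the remark preceding the lemma this is the same as being nuclearly liftable.

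Finally I would verify the fullness property, which is the conceptual heart of the argument. By Lemma \ref{lem:countabledense} it suffices to treat $a$ in the countable set $Q$; fix such an $a$ and write $J:=\Phi(\overline{AaA})\in\I(B)$. Because $\f(a)$ generates the ideal $\I(\iota_\infty)(J\otimes C)$ in $(B\otimes C)_\infty$ and $\Lambda$ is a $\ast$-homomorphism restricting to $\Lambda_0$, a direct ideal computation shows that the ideal of $B_\infty$ generated by $\psi(a)=\Lambda(\f(a))$ coincides with the ideal generated by $\Lambda_0(J\otimes C)=\{\,j\,\tilde\mu(c):j\in J,\ c\in C\,\}$. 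The inclusion into $\I(\iota_\infty)(J)$ is clear since $j\in J\subseteq B$. For the reverse inclusion I use that $\mu(C)$ is full in the unital algebra $F_{\infty,\beta}(B)$: indeed $\mu(C)$ contains $\kappa_1(\K(\Hil_G^\infty))$, the image of $\K(\Hil_G^\infty)$ under the unital embedding $\Oinf\to F_{\infty,\beta}(B)$, and $\K(\Hil_G^\infty)$ is full in the simple algebra $\Oinf$, so $\1\in\overline{F_{\infty,\beta}(B)\,\mu(C)\,F_{\infty,\beta}(B)}$. Lifting the unit to a central element of $B_{\infty,\beta}\cap B'$ that acts as a unit on $B$, one recovers every $j\in J$ inside the ideal generated by $\{\,j'\,\tilde\mu(c)\,\}$, giving $\overline{B_\infty\psi(a)B_\infty}=\I(\iota_\infty)(J)$ as required. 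The main obstacle is the construction of $\mu$ together with the reindexing that turns $\Lambda_0$ into a genuine $\ast$-homomorphism on the sequence algebra while keeping the ideal-theoretic bookkeeping exact; the fullness itself then hinges on the simplicity of $\Oinf$, which is what makes the $\K(\Hil_G^\infty)$-factor full after the collapse.
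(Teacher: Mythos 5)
Your overall strategy is the same as the paper's: use isometric shift-absorption together with equivariant $\O2$-stability to embed $\K(\Hil_G^\infty)\otimes\O2$ equivariantly into $F_{\infty,\beta}(B)$ (the paper obtains this map, called $\eta$ there, from Remark \ref{rem:isa} and \cite[Lemma 2.12]{Sza18}), form the multiplication map $\theta\colon B\otimes\K(\Hil_G^\infty)\otimes\O2\to B_{\infty,\beta}$ (your $\Lambda_0$), transfer fullness using that $\K(\Hil_G^\infty)$ is full in the simple algebra $\Oinf$, compose with $\f$, come back down to $B_{\infty,\beta}$ by a diagonal argument, and finish with Lemma \ref{lem:countabledense}. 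However, there is a genuine gap in your treatment of nuclear liftability. You claim $\psi=\Lambda\circ\f$ is nuclear because it is ``the composition of a nuclear c.p.\ map with a $\ast$-homomorphism''. This principle fails when the $\ast$-homomorphism is defined only on a subalgebra containing the range of the nuclear map: nuclearity of $\f$ as a map into $(B\otimes C)_\infty$ (with $C=\K(\Hil_G^\infty)\otimes\O2$ in your notation) means $\f$ is a point-norm limit of maps factoring through matrix algebras, but those factoring maps take values in all of $(B\otimes C)_\infty$, not in the separable domain $D$ of $\Lambda$, so they cannot be composed with $\Lambda$. The principle is in fact false in general: take $A$ separable, exact and non-nuclear, e.g.\ $\mathrm{C}^*_r(F_2)$; the inclusion $A\hookrightarrow\B(\Hil)$ is nuclear by exactness, yet composing it with the identity $\ast$-homomorphism defined on the subalgebra $A\subseteq\B(\Hil)$ returns $\id_A$, which is not nuclear. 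So nuclearity of $\psi$ does not follow formally from what you have written; it has to be extracted from the lift structure.

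This is precisely why the paper performs the diagonalization at the level of \emph{maps} rather than elements of the sequence algebra: it lifts $\Theta=\theta_\infty\circ\f$ to a sequence of nuclear c.p.c.\ maps $\Theta_t\colon A\to B_\infty$ (possible because $\theta$, unlike your $\Lambda$, is globally defined on $B\otimes C$, so $\theta\circ\f_t$ is nuclear for every nuclear c.p.c.\ lift $\f_t$ of $\f$), lifts each $\Theta_t$ again to nuclear c.p.c.\ maps $\Theta_t^{(j)}\colon A\to B$ (using exactness of $A$ via the remark preceding the lemma), and then takes a diagonal sequence $\Theta_{t_n}^{(j_n)}$; this diagonal is by construction a nuclear c.p.c.\ lift of $\psi$, and it is simultaneously chosen to enforce approximate multiplicativity, self-adjointness and equivariance. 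The same inductive choice of indices also threads through the countably many elements of $(B\otimes C)_\infty$ and $B_\infty$ witnessing fullness of $\f(q_i)$ and of $\theta(h_i)$, which is the second point your write-up leaves implicit: your ``direct ideal computation'' applies $\Lambda$ to such witnesses, so they must all be placed inside the separable domain $D$ \emph{before} the diagonalization, for every $q_i\in Q$ at once. Both repairs are routine for someone who knows the technique, but they constitute the actual content of the proof; once they are carried out, your argument coincides with the paper's.
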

\begin{proof}
Since $A$ is separable, we may fix a countable dense subset $Q_0\subseteq A_+$, and consider the set $Q\subseteq A_+$ of norm one elements given as in Lemma \ref{lem:countabledense}.\
Since $Q$ is countable, we can write it as $Q=\{q_i\}_{i\in\N}$.\
For every $i\in\N$, if $\Phi(\overline{Aq_iA})\neq\{0\}$ we choose a strictly positive norm-one element $h_i^0\in\Phi(\overline{Aq_iA})$, otherwise we set $h_i^0=0$.\
Moreover, we fix a rank-one projection $e\in\K(\Hil_G^\infty)$, so that $h_i=h_i^0\otimes e\otimes 1_{\O2}$ is a full element of $\Phi(\overline{Aq_iA})\otimes\K(\Hil_G^\infty)\otimes\O2$, and thus also full in $\I(\iota_{\infty})(\Phi(\overline{Aq_iA})\otimes\K(\Hil_G^\infty)\otimes\O2)$ for all $i\in\N$.\
Moreover, for any given $i\in\N$, the ideal $\I(\iota_{\infty})(\Phi(\overline{Aq_iA})\otimes\K(\Hil_G^\infty)\otimes\O2)$ is purely infinite by \cite[Propositions 4.3+4.20]{KR00}.\
Therefore, $\f(q_i)$ and $h_i$ are Cuntz equivalent, and by Remark \ref{rem:RordamLemma}(ii) we can find $K_i\in\N$ and elements $c_{i,k}$, $d_{i,k}$ in $\I(\iota_{\infty})(\Phi(\overline{Aq_iA})\otimes\K(\Hil_G^\infty)\otimes\O2)$ such that
\begin{align*}
(h_i-2^{-k})_+ = c_{i,k}^* \f(q_i) c_{i,k} &&
\text{and} && (\f(q_i)-2^{-k})_+ = d_{i,k}^* (h_i-2^{-K_i})_+ d_{i,k}
\end{align*}
for all $i,k\in\N$.\

Recall from Remark \ref{rem:cptinclusion} that there exists a full equivariant embedding
\[
\iota:(\K(\Hil_G^\infty),\Ad(\lambda^\infty))\hookrightarrow(\Oinf,\gamma).
\]
Since $\beta$ is isometrically shift-absorbing and equivariantly $\O2$-stable, one can use Remark \ref{rem:isa} and \cite[Lemma 2.12]{Sza18} to obtain an equivariant $\ast$-homomorphism $\eta:(\Oinf\otimes\O2,\gamma\otimes\id_{\O2})\hookrightarrow (F_{\infty,\beta}(B),\tilde{\beta}_\infty)$.\
Note that we have a canonical commutative diagram of equivariant $\ast$-homomorphisms given by
\[\begin{tikzcd}
	{(B,\beta)} && {(B_{\infty,\beta},\beta_\infty)} \\
	& {(B \otimes_{\max} F_{\infty,\beta}(B),\beta\otimes\tilde{\beta}_\infty)}
	\arrow[from=1-1, to=1-3]
	\arrow["{\id_B\otimes\1}"'{pos=0.3}, from=1-1, to=2-2]
	\arrow["{\rho}"'{pos=0.7}from=2-2, to=1-3]
\end{tikzcd}\]
where $\rho$ is the natural $\ast$-homomorphism defined as $\rho(b\otimes (x+B_{\infty,\beta}\cap B^\perp))=bx$, for all $b\in B$, $x\in B_{\infty,\beta}\cap B'$.\
We define the equivariant $\ast$-homomorphism
\begin{align*}
&\theta:(B\otimes\K(\Hil_G^\infty)\otimes\O2,\beta\otimes\Ad(\lambda^\infty)\otimes\id_{\O2})\to(B_{\infty,\beta},\beta_\infty),\\
&\theta=\rho \circ (\id_B\otimes\eta) \circ (\id_B\otimes\iota\otimes\id_{\O2}).
\end{align*}
Let us show that $\theta(I\otimes\K(\Hil_G^\infty)\otimes\O2)$ generates the ideal $\I(\iota_{\infty})(I)$ in $B_\infty$ for all $I\in\I(B)$.
\begin{align*}
\I(\theta)(I\otimes\K(\Hil_G^\infty)\otimes\O2) &= \I(\rho) \circ \I(\id_B\otimes\eta) \circ \I(\id_B\otimes\iota\otimes\id_{\O2})(I\otimes\K(\Hil_G^\infty)\otimes\O2) \\
&=\I(\rho) \circ \I(\id_B\otimes\eta)(I\otimes\Oinf\otimes\O2)\\
&=\I(\rho)(I\otimes F_{\infty,\beta}(B))=\I(\iota_{\infty})(I).
\end{align*}
In particular, for each $i\in\N$, we have that either $\Phi(\overline{Aq_iA})=0$ or $\theta(h_i)$ and $h_i^0$ are full, norm one elements of $\I(\iota_{\infty})(\Phi(\overline{Aq_iA}))$.\
By \cite[Propositions 4.3+4.5+4.20]{KR00} the latter \Cs-algebra is purely infinite.\
Hence, by using Remark \ref{rem:RordamLemma}(ii), we can find $L_i\in\N$ and elements $y_{i,k}$, $z_{i,k}$ in $B_\infty$ satisfying
\begin{align*}
(h_i^0-2^{-k})_+ = y_{i,k}^*\theta((h_i-2^{-L_i})_+)y_{i,k} &&
\text{and} && (\theta(h_i)-2^{-k})_+ = z_{i,k}^*h_i^0z_{i,k}
\end{align*}
for all $i,k\in\N$.\
Now, we consider
\begin{equation*}
\theta_\infty: (B\otimes\K(\Hil_G^\infty)\otimes\O2)_\infty \to (B_{\infty})_{\infty},
\end{equation*}
the componentwise application of $\theta$.\
The composite $\ast$-homomorphism
\begin{equation*}
\Theta=\theta_\infty \circ \f: A \to (B_{\infty,\beta})_{\infty,\beta_\infty}
\end{equation*}
is nuclearly liftable and equivariant with respect to $\alpha$ and $(\beta_\infty)_\infty$.\
Set $e_{i,k}=\theta_{\infty}(c_{i,L_i})y_{i,k}$ and $f_{i,k}=z_{i,K_i}\theta_{\infty}(d_{i,k})$ as elements of $(B_{\infty})_{\infty}$.
Then, we have that
\begin{align*}
(h_i^0-2^{-k})_+=e_{i,k}^*\Theta(q_i)e_{i,k} && \text{and} && (\Theta(q_i)-2^{-k})_+=f_{i,k}^*h_i^0f_{i,k}
\end{align*}
for all $i,k\in\N$.

Let $(\Theta_{t})_{t\in\N}:A\to \prod_{\N}B_{\infty}$ be a sequence of c.p.c.\ nuclear maps lifting $\Theta$, and for each pair $i,k\in\N$, let $(e_{i,k}^{(t)})_{t\in\N}$ and $(f_{i,k}^{(t)})_{t\in\N}$ be elements of $\ell^{\infty}(\N,B_\infty)$ representing $e_{i,k}$ and $f_{i,k}$, respectively.\
We may assume $\|e_{i,k}^{(t)}\|\leq\|e_{i,k}\|$ and $\|f_{i,k}^{(t)}\|\leq\|f_{i,k}\|$ for all $t\geq 1$.
It follows that
\begin{align*}
\limsup_{t\to\infty}\left(\|(h_i^0-2^{-k})_+-(e_{i,k}^{(t)})^*\Theta_t(q_i)e_{i,k}^{(t)}\|+\|(\Theta_t(q_i)-2^{-k})_+-(f_{i,k}^{(t)})^*h_i^0f_{i,k}^{(t)}\|\right)=0
\end{align*}
for all $i, k\in\N$.\
Moreover, we may assume that
\begin{align*}
\limsup_{t\to\infty}\|\Theta_t(ab)-\Theta_t(a)\Theta_t(b)\| &=0, \\
\limsup_{t\to\infty}\max_{g\in K}\|\beta_{\infty,g} \circ \Theta_t(a)-\Theta_t \circ \alpha_g(a)\| &=0
\end{align*}
for all $a,b\in A$ and compact subsets $K\subseteq G$.\
Note that the latter condition can be assumed as sequences lift to $\beta_\infty$-continuous sequences, as recalled in Definition \ref{def:sequencealgebra}.\
Fix an increasing sequence of finite subsets $\mathcal{F}_n\subseteq A$ such that $\overline{\bigcup_{n\in\N}\mathcal{F}_n}=A$, and an increasing sequence of compact subsets $K_n\subseteq G$ such that $\bigcup_{n\in\N}K_n=G$.\
For each $t\in\N$, let $(\Theta_{t}^{(j)})_{j\in\N}:A\to \prod_{\N}B$ be a sequence of c.p.c.\ nuclear maps representing $\Theta_{t}$, and for each $i,k,t\in\N$, let $(e_{i,k}^{(t,j)})_{j\in\N}$ and $(f_{i,k}^{(t,j)})_{j\in\N}$ elements of $\ell^{\infty}(\N,B)$ representing $e_{i,k}^{(t)}$ and $f_{i,k}^{(t)}$, respectively.\
We may assume $\|e_{i,k}^{(t,j)}\|\leq\|e_{i,k}\|$ and $\|f_{i,k}^{(t,j)}\|\leq\|f_{i,k}\|$ for all $t,j\geq 1$.
Then, inductively on $n\in\N$, we may find a strictly increasing sequence $(t_n)_n$ in $\N$ such that
\begin{align*}
\max_{1\leq i, k\leq n}\limsup_{j\to\infty}\|(e_{i,k}^{(t_n,j)})^*\Theta_{t_n}^{(j)}(q_i)e_{i,k}^{(t_n,j)} - (h_i^0-2^{-k})_+\| &\leq 2^{-n-1},\\
\max_{1\leq i, k\leq n}\limsup_{j\to\infty}\|(\Theta_{t_n}^{(j)}(q_i)-2^{-k})_+ - (f_{i,k}^{(t_n,j)})^*h_i^0f_{i,k}^{(t_n,j)}\| &\leq 2^{-n-1}, \\
\max_{a,b\in\mathcal{F}_n}\limsup_{j\to\infty}\|\Theta_{t_n}^{(j)}(ab)-\Theta_{t_n}^{(j)}(a)\Theta_{t_n}^{(j)}(b)\| &\leq 2^{-n-1},\\
\max_{a\in\mathcal{F}_n,g\in K_n}\limsup_{j\to\infty} \|\beta_{g} \circ \Theta_{t_n}^{(j)}(a)-\Theta_{t_n}^{(j)} \circ \alpha_g(a)\| &\leq 2^{-n-1}
\end{align*}
for all $n\in\N$.\
Therefore, for each index $t_n$, we may find $j_n\in\N$ (which we can arrange to form an increasing sequence) such that when $j\geq j_n$ one has that
\begin{align*}
\max_{1\leq i, k\leq n}\|(e_{i,k}^{(t_n,j)})^*\Theta_{t_n}^{(j)}(q_i)e_{i,k}^{(t_n,j)} - (h_i^0-2^{-k})_+\| &\leq 2^{-n},\\
\max_{1\leq i, k\leq n}\|(\Theta_{t_n}^{(j)}(q_i)-2^{-k})_+ - (f_{i,k}^{(t_n,j)})^*h_i^0f_{i,k}^{(t_n,j)}\| &\leq 2^{-n}, \\
\max_{a,b\in\mathcal{F}_n}\|\Theta_{t_n}^{(j)}(ab)-\Theta_{t_n}^{(j)}(a)\Theta_{t_n}^{(j)}(b)\| &\leq 2^{-n},\\
\max_{a\in\mathcal{F}_n,g\in K_n}\|\beta_{g} \circ \Theta_{t_n}^{(j)}(a)-\Theta_{t_n}^{(j)} \circ \alpha_g(a)\| &\leq 2^{-n}.
\end{align*}
Now, observe that the diagonal sequence of c.p.c.\ nuclear maps $\{\Theta_{t_n}^{(j_n)}\}_{n\in\N}:A\to\prod_{\N}B$ produces a nuclearly liftable equivariant $\ast$-homomorphism $\psi:A\to B_{\infty,\beta}$.\
Consider the elements $\bar{e}_{i,k}=[(e_{i,k}^{(t_n,j_n)})_n]\in B_\infty$ and $\bar{f}_{i,k}=[(f_{i,k}^{(t_n,j_n)})_n]\in B_\infty$.
From the approximate conditions above we may then conclude
\[
\bar{e}_{i,k}^*\psi(q_i)\bar{e}_{i,k} = (h_i^0-2^{-k})_+ \quad\text{and}\quad \bar{f}_{i,k}^* h_i^0 \bar{f}_{i,k} = (\psi(q_i)-2^{-k})_+.
\]
Since we may let $k\to\infty$, this implies that $h_i^0$ and $\psi(q_i)$ are Cuntz equivalent in $B_\infty$.
In particular, $\psi(q_i)$ belongs to and is a full element in $\I(\iota_{\infty})(\Phi(\overline{Aq_iA}))$, for all $i\in\N$.\
Since we chose $Q$ at the start of the proof to satisfy the conclusion of Lemma \ref{lem:countabledense} for $S$ consisting of all positive contractions in $A$, it follows that $\psi(a)$ is a full element of $\I(\iota_{\infty})(\Phi(\overline{AaA}))$ for all $a\in A_+$.
\end{proof}

\begin{remark}[{see \cite[Remark 4.6]{GS22b}}]\label{rem:trick}
Let $\sigma:\C_0(\R) \to \C_0(\R)$ be the shift automorphism given by $\sigma(f)(t) = f(t+1)$.\
It is well-known that $\C_0(\R)\rtimes_{\sigma}\Z \cong \C(\T)\otimes\K$.\
For any \Cs-algebra $A$, denote by $\tau$ the automorphism of its suspension algebra $SA=\C_0(\R)\otimes A$ given by $\sigma\otimes\id_A$.
We then have the following isomorphism,
\begin{equation*}
SA \rtimes_{\tau} \Z \cong A \otimes (\C_0(\R) \rtimes_\sigma \Z) \cong A \otimes \C(\T) \otimes \K.
\end{equation*}
For an action $\alpha:G\curvearrowright A$, we use the notation $S\alpha := \id_{\C_0(\R)}\otimes\alpha:G\curvearrowright SA$.\
Since $S\alpha:G\curvearrowright SA$ pointwise commutes with the action $\tau:\Z\curvearrowright SA$, the natural isomorphism in the previous paragraph shows that $\alpha\otimes\id:G\curvearrowright A\otimes\C(\T)\otimes\K$ corresponds to the unique action $S\alpha\rtimes\Z:G\curvearrowright SA\rtimes_{\tau}\Z$ that extends $S\alpha$ by acting trivially on the copy of $\Z$.
\end{remark}

\begin{lemma}\label{lem:SA}
Let $\alpha:G \curvearrowright A$ be an action on a separable, exact $\Cs$-algebra, and $\beta:G \curvearrowright B$ a strongly stable, equivariantly  $\O2$-stable, isometrically shift-absorbing action on a separable $\Cs$-algebra.\
Then, for every equivariant $\Cu$-morphism $\Phi:(\I(A),\alpha^{\sharp})\to(\I(B),\beta^{\sharp})$, there exists a nuclearly liftable, equivariant $\ast$-homomorphism
\begin{equation*}
\psi: (SA,S\alpha) \to ( B_{\infty,\beta}, \beta_\infty)
\end{equation*}
such that, for each positive $a\in A$ and positive non-zero $f\in\C_0(\R)$, $\psi(f \otimes a)$ is a full element of $\I(\iota_\infty)(\Phi(\overline{AaA}))$.\footnote{Recall that $\iota_\infty$ denotes the canonical embedding into the sequence algebra, not its continuous part.}
\end{lemma}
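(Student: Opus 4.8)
The plan is to reduce the statement to Lemma \ref{lem:seqargument} applied to the suspended system $(SA,S\alpha)$, paired with a $\Cu$-morphism that collapses the suspension coordinate, so that the fullness condition on elementary tensors $f\otimes a$ comes out automatically. First I would produce the right invariant. Since $A$, and hence $SA$, is separable we have $\I(SA)=\I_{\sigma}(SA)$, and by Remarks \ref{rem:prodprim} and \ref{rem:isomideals} the lattice $\I(SA)$ is the lattice of open subsets of $\Prim(SA)\cong\R\times\Prim(A)$. Let $p\colon\R\times\Prim(A)\to\Prim(A)$ be the projection and let $\pi_{\ast}\colon\I(SA)\to\I(A)$ be the map sending an ideal $J$ (open set $W$) to the ideal corresponding to the open set $p(W)$; equivalently $\pi_{\ast}(J)$ is the smallest ideal $I$ of $A$ with $J\subseteq\C_0(\R)\otimes I$. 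I then set $\Phi':=\Phi\circ\pi_{\ast}$.

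I would check that $\pi_{\ast}$ is an equivariant $\Cu$-morphism. It preserves arbitrary suprema because $p\big(\overline{\bigcup_{\lambda}W_{\lambda}}\big)=\overline{\bigcup_{\lambda}p(W_{\lambda})}$, and it preserves compact containment: if $W_1\Subset W_2$ and $p(W_2)\subseteq\overline{\sum_{\lambda}O_{\lambda}}$, then $W_2\subseteq\overline{\sum_{\lambda}(\R\times O_{\lambda})}$, so finitely many $\R\times O_{\lambda_i}$ cover $W_1$, whence finitely many $O_{\lambda_i}$ cover $p(W_1)$, i.e. $p(W_1)\Subset p(W_2)$; by the remark following Proposition \ref{prop:Cu} this makes $\pi_{\ast}$ a $\Cu$-morphism. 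Equivariance holds because $(S\alpha)^{\sharp}=\id_{\R}\times\alpha^{\sharp}$ satisfies $p\circ(S\alpha)^{\sharp}_g=\alpha^{\sharp}_g\circ p$, and then $\Phi'$ is equivariant since $\Phi$ is. Finally, for positive $a\in A$ and nonzero positive $f\in\C_0(\R)$ one has $\overline{SA(f\otimes a)SA}=\C_0(\{f>0\})\otimes\overline{AaA}$, so $\pi_{\ast}(\overline{SA(f\otimes a)SA})=\overline{AaA}$ and $\Phi'(\overline{SA(f\otimes a)SA})=\Phi(\overline{AaA})$. Thus applying Lemma \ref{lem:seqargument} to $(SA,S\alpha)$ and $\Phi'$ returns precisely a nuclearly liftable, equivariant $\psi\colon(SA,S\alpha)\to(B_{\infty,\beta},\beta_{\infty})$ with $\psi(f\otimes a)$ full in $\I(\iota_{\infty})(\Phi(\overline{AaA}))$, which is the desired conclusion.

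It remains to verify the hypothesis of Lemma \ref{lem:seqargument} for $(SA,S\alpha,\Phi')$, i.e. to construct a nuclearly liftable, equivariant $\ast$-homomorphism $\f\colon SA\to(B\otimes\K(\Hil_G^{\infty})\otimes\O2)_{\infty,\beta\otimes\Ad(\lambda^{\infty})\otimes\id_{\O2}}$ with $\f(x)$ full in $\I(\iota_{\infty})(\Phi'(\overline{SA\,x\,SA})\otimes\K(\Hil_G^{\infty})\otimes\O2)$ for all $x\in(SA)_+$. Here I would invoke Lemma \ref{lem:existence}, which applies since $SA$ is separable and exact, to obtain an equivariant, nuclear $\ast$-homomorphism $\kappa\colon(SA,S\alpha)\to(\M(\K(\Hil_G^{\infty})\otimes B),\Ad(\lambda^{\infty})\otimes\beta)$ satisfying $\overline{(\K(\Hil_G^{\infty})\otimes B)\kappa(J)(\K(\Hil_G^{\infty})\otimes B)}=\K(\Hil_G^{\infty})\otimes\Phi'(J)$ for all $J\in\I(SA)$. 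The map $\f$ should then arise by compressing this multiplier-valued homomorphism into the sequence algebra by a quasicentral approximate unit and tensoring with $\1_{\O2}$; the ideal-generation property of $\kappa$ then transfers to the required fullness of $\f$, while nuclear liftability is inherited from the nuclearity of $\kappa$ through the compressing c.p.c.\ maps, exactly as in the sequential bookkeeping already carried out inside Lemma \ref{lem:seqargument}.

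The hard part will be to perform this compression \emph{equivariantly}. Compression by a quasicentral approximate unit of $\K(\Hil_G^{\infty})\otimes B$ produces a genuine, nuclearly liftable $\ast$-homomorphism, but its $G$-equivariance would require the approximate unit to be asymptotically invariant for $\Ad(\lambda^{\infty})\otimes\beta$; for non-amenable $G$ no such approximate unit exists, since $\Ad(\lambda^{\infty})$ has no almost-invariant finite-rank projections. This is exactly where the suspension is indispensable: because $G$ acts trivially on the $\C_0(\R)$-factor of $SA$, and in view of the identification $SA\rtimes_{\tau}\Z\cong A\otimes\C(\T)\otimes\K$ with $S\alpha\rtimes\Z\cong\alpha\otimes\id_{\C(\T)\otimes\K}$ from Remark \ref{rem:trick}, the trivially acted-upon suspension direction can serve as the parameter along which the compressing approximate unit is taken. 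I expect that reparametrising the approximate unit along $\R$ converts the failure of $\Ad(\lambda^{\infty})\otimes\beta$-invariance into a bounded translation in the suspension coordinate, which is absorbed once the uniformly continuous functions of $\C_0(\R)$ are applied, thereby restoring equivariance of $\f$ with no amenability assumption on $G$. The careful estimates controlling this reparametrisation, together with the verification that the compressed element remains full in the prescribed ideal, should be the technically heaviest component of the argument.
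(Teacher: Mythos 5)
Your high-level skeleton is the paper's: collapse the suspension coordinate to an equivariant $\Cu$-morphism $\Phi'=\Phi\circ\pi_*$ on $\I(SA)$ (the paper does this with a slice map, $\Phi\circ\I(\rho_\mu)$), reduce to Lemma \ref{lem:seqargument}, and manufacture the input map from the multiplier-valued homomorphism of Lemma \ref{lem:existence} together with a quasicentral approximate unit. But the difficulty you single out is not a difficulty, and the step you treat as routine is where the actual content lies. Your claim that for non-amenable $G$ there is no $(\Ad(\lambda^\infty)\otimes\beta)$-asymptotically invariant quasicentral approximate unit is false: Kasparov's technical lemma \cite[Lemma 1.4]{Kas88} --- exactly the tool the paper invokes at this point --- produces, for \emph{every} second-countable locally compact $G$, an approximate unit $(b_n)$ of $B\otimes\K(\Hil_G^\infty)$ that is quasicentral with respect to the separable set $\kappa(A)$ and satisfies $\max_{g\in K}\|(\beta\otimes\Ad(\lambda^\infty))_g(b_n)-b_n\|\to 0$ for every compact $K\subseteq G$. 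The F\o lner-type obstruction you allude to concerns approximate units consisting of (finite-rank) projections; for positive contractions, Kasparov's convexity argument needs no amenability whatsoever. Consequently your proposed ``reparametrisation along $\R$'' is a remedy for a non-existent problem.

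The genuine gap is that compressing $\kappa$ by a quasicentral approximate unit does \emph{not} yield a $\ast$-homomorphism. Writing $b$ for the element of the sequence algebra induced by $(b_n)$, quasicentrality makes $b$ commute with the image of $\kappa$, so $b\kappa(x)b=b^2\kappa(x)$ and $\big(b\kappa(x)b\big)\big(b\kappa(y)b\big)=b^4\kappa(xy)\neq b^2\kappa(xy)$: the compression is a c.p.c.\ order zero map, not multiplicative. Restoring multiplicativity is precisely what the suspension variable is for, and this dictates an order of operations opposite to yours: the paper applies Lemma \ref{lem:existence} to $A$ (not to $SA$) to get $\kappa$ defined on $A$, picks a positive contraction $h\in\O2$ with full spectrum $[0,1]$ and a homeomorphism $\theta:(0,1)\to\R$, and defines $\psi(f\otimes a)=(f\circ\theta)(b\otimes h)\cdot(\kappa(a)\otimes 1_{\O2})$. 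Since $b\otimes h$ is a positive contraction commuting with $\kappa(A)\otimes 1_{\O2}$, functional calculus in the suspension variable makes this genuinely multiplicative (the standard order-zero/cone mechanism), equivariant (because $b$ is fixed by the induced action), and valued in the sequence algebra rather than the multipliers; moreover the factor $h$ --- not a formal ``$\otimes\,1_{\O2}$'' appended afterwards --- is what keeps the image full, since relative to $B\otimes\K\otimes 1_{\O2}$ one has $b\otimes h\equiv 1\otimes h$, whereas any function of $b$ alone vanishing at $1$ annihilates $\iota_\infty(B\otimes\K)$. In your scheme the suspension has already been consumed by applying Lemma \ref{lem:existence} to $SA$, so no commuting positive contraction remains to absorb, and compression can only produce a homomorphism on the cone of $SA$, not on $SA$ itself. (A smaller unaddressed point: your reduction demands fullness of the input for all $x\in(SA)_+$, while any such construction naturally yields it on elementary tensors; the paper works throughout with the set $\mathcal{S}$ of elementary tensors via Lemma \ref{lem:countabledense}.)
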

\begin{proof}
In the rest of the proof, we use the notation $B^s$ for $B\otimes\K(\Hil_G^\infty)\otimes\O2$, $\beta^s$ for the action $\beta\otimes\Ad(\lambda^\infty)\otimes\id_{\O2}$ on $B^s$, and $\Phi^s(I)$ for the ideal given by $\Phi(I)\otimes\K(\Hil_G^\infty)\otimes\O2$ for all $I\in\I(A)$.\
Moreover, we will identify $\K$ with the \Cs-algebra of compact operators on $\Hil_G^\infty$.

Let $\rho_{\mu}:SA\to A$ be the right slice map associated to a faithful state $\mu$ on $\C_0(\R)$.\
Then, the induced $\Cu$-morphism $\I(\rho_{\mu}):\I(SA)\to\I(A)$ has the property that for any positive non-zero function $f\in\C_0(\R)$ and positive element $a\in A$, $\I(\rho_{\mu})(\overline{SA(f\otimes a)SA})=\overline{AaA}$.\
Denote by $\mathcal{S}$ the set of positive elements of $SA$ of the form $f\otimes a$ for positive non-zero functions $f\in\C_0(\R)$ and positive elements $a\in A$.
In order to prove the lemma, it is enough to show that there exists an equivariant $\ast$-homomorphism
\begin{equation*}
\psi:(SA,S\alpha) \to ( B^s_{\infty,\beta^s}, \beta^s_\infty)
\end{equation*}
that is nuclearly liftable, and with the property that $\psi(f \otimes a)$ is a full element of
\begin{equation*}
\I(\iota_\infty)(\Phi^s(\I(\rho_\mu)(\overline{SA(f\otimes a)SA})))=\I(\iota_\infty)(\Phi^s(\overline{AaA}))
\end{equation*}
for all $(f\otimes a)\in\mathcal{S}$, and then apply Lemma \ref{lem:seqargument}.

From Lemma \ref{lem:existence}, there exists a nuclear, equivariant $\ast$-homomorphism $\kappa: (A,\alpha) \to (\M(B\otimes\K),\beta\otimes\Ad(\lambda^\infty))$ satisfying
\begin{equation*}
\overline{(B\otimes\K) \kappa(I) (B\otimes\K)} = \Phi(I)\otimes\K
\end{equation*}
for all $I\in\I(A)$.\
We use \cite[Lemma 1.4]{Kas88} to infer the existence of an approximate unit $(b_n)_{n\in\N}$ in $B\otimes\K$ such that
\begin{align*}
\lim_{n\to\infty} \max_{g\in K} \|(\beta\otimes\Ad(\lambda^\infty))_g(b_n) - b_n\|=0, \quad \lim_{n\to\infty} \|\left[ b_n, \kappa(A) \right]\|=0
\end{align*}
for every compact set $K\subseteq G$.\
The approximate unit $(b_n)_n$ induces an element $b\in (B\otimes\K)_{\infty,\beta\otimes\Ad(\lambda^\infty)}$ that commutes with $\kappa(A)$ and is fixed under $(\beta\otimes\Ad(\lambda^\infty))_\infty$.\
We may then pick a positive contraction $h \in\O2 \subseteq (\O2)_\infty$ with full spectrum $\left[0,1\right]$, and consider the elementary tensor $b\otimes h$, which is an element in $B^s_{\infty,\beta^s}$ that commutes with $\kappa(A)\otimes\1_{\O2}$.\
Pick a homeomorphism $\theta: (0,1)\to\R$, for instance $\theta(t)=\log(\frac1t-1)$.
We use $b\otimes h$ to define the $\ast$-homomorphism given by
\begin{equation*}
\psi: (SA,S\alpha) \to ( B^s_{\infty,\beta^s},\beta^s_\infty), \quad \psi(f\otimes a)=(f\circ\theta)(b\otimes h) \cdot (\kappa(a)\otimes\1_{\O2})
\end{equation*}
for all $f\in\C_0(\R)$ and $a\in A$, which is equivariant because $(f\circ\theta)(b\otimes h) \in B^s_{\infty,\beta^s}$ is fixed by $\beta^s_\infty$, and $\kappa$ is equivariant.\
Moreover, $\psi$ is nuclear by \cite[Lemma 6.9]{Gab20}.

For any positive element $a\in A$ and positive non-zero function $f\in\C_0(\R)$, we want to show that $\psi(f\otimes a)$ is a full element of $\I(\iota_{\infty})(\Phi^s(\overline{AaA}))$.\
In order to show that $\psi(f\otimes a)$ is contained in $\I(\iota_{\infty})(\Phi^s(\overline{AaA}))$, it is sufficient to prove that, for any given $\e>0$, the element $\psi(f\otimes(a-\e)_{+})$ belongs to $\I(\iota_{\infty})(\Phi^s(\overline{AaA}))$.\
Then, fix some $\e>0$.\
Recall that $\Cu$-morphisms preserve compact containment and that $\overline{A(a-\e)_{+}A} \Subset \overline{AaA}$  from Remark \ref{rem:ideallattice}(i).\
Then, we have that $\Phi^s(\overline{A(a-\e)_{+}A}) \Subset \Phi^s(\overline{AaA})$.\
Subsequently, we use Remark \ref{rem:ideallattice}(ii) to infer that
\begin{align*}
\psi(f\otimes(a-\e)_{+}) &= (f\circ\theta)(b\otimes h)\kappa(a-\e)_{+} \\
&\in \I(\iota_{\infty})\left(\overline{ B^s (\kappa\otimes\1_{\O2})(\overline{A(a-\e)_{+}A}) B^s }\right) \\
&\subseteq \I(\iota_{\infty})\left(\Phi^s(\overline{AaA})\right).
\end{align*}
Finally, we want to show that $\psi(f\otimes a)$ is also a full element.\
Recall that, since $B$ is separable, $\Phi(\overline{AaA})\otimes\K$ contains a full, positive element, which we denote by $e$.\
Moreover, since $\O2$ is simple, $f$ is non-zero and $h$ has full spectrum $\left[0,1\right]$, $e \otimes (f\circ\theta)(h)$ is full in $\Phi^s(\overline{AaA})$, and thus also in $\I(\iota_{\infty})(\Phi^s(\overline{AaA}))$.\
It is therefore sufficient to show that, for any $\e>0$, the element $(e-\e)_{+}\otimes (f\circ\theta)(h)$ belongs to the ideal generated by $\psi(f\otimes a)$.
We thus fix an arbitrary $\e>0$.\
Observe that $b\otimes h$ is in $B^s_{\infty} \cap (B\otimes\K\otimes 1_{\O2})'$, and that $b\otimes 1_{\O2} + (B\otimes\K\otimes 1_{\O2})^{\perp}$ is the unit of $F(B\otimes\K\otimes 1_{\O2},B^s_{\infty})$.\
In particular,
\begin{equation*}
b\otimes h + (B\otimes\K\otimes 1_{\O2})^{\perp} = 1\otimes h + (B\otimes\K\otimes 1_{\O2})^{\perp},
\end{equation*}
which furthermore implies that
\begin{equation*}
(f\circ\theta)(b\otimes h) + (B\otimes\K\otimes 1_{\O2})^{\perp} = 1\otimes (f\circ\theta)(h) + (B\otimes\K\otimes 1_{\O2})^{\perp}.
\end{equation*}
Hence, for any $c\in B\otimes\K$,
\begin{equation}\label{eq:commute}
(f\circ\theta)(b\otimes h)(c\otimes 1_{\O2}) = (c\otimes 1_{\O2})(f\circ\theta)(b\otimes h) = (c\otimes 1_{\O2})(1\otimes (f\circ\theta)(h)).\
\end{equation}
Since $e\in\overline{(B\otimes\K)\kappa(a)(B\otimes\K)}$, by \cite[Lemma 2.5(ii)]{KR00} there exist some $n\in\mathbb{N}$, and $c_1,\dots,c_n \in B\otimes\K$ such that $(e-\e)_{+} = \sum_{i=1}^{n} c_i^*\kappa(a)c_i$, and
\begin{align*}
 (e-\e)_{+}\otimes (f\circ\theta)(h) =  &\sum_{i=1}^{n} (c_i^*\otimes 1_{\O2})(\kappa(a)\otimes 1_{\O2})(c_i\otimes 1_{\O2})(1\otimes (f\circ\theta)(h))\\
 = &\sum_{i=1}^{n} (c_i^*\otimes 1_{\O2})\kappa(a)(f\circ\theta)(b\otimes h)(c_i\otimes 1_{\O2})\\
 = &\sum_{i=1}^{n} (c_i^*\otimes 1_{\O2})\psi(f\otimes a)(c_i\otimes 1_{\O2}),
\end{align*}
where in the second passage we used \eqref{eq:commute}.\
This finishes the proof.
\end{proof}

For the reader's convenience, we recall a result from \cite{GS22b} that will be heavily used later.\
Recall that nuclear liftability in the context of this lemma is equivalent to nuclearity by Remark \ref{rem:nuclift}

\begin{lemma}[see {\cite[Lemma 4.2]{GS22b}}]\label{lem:4.2}
Let $\alpha:G\curvearrowright A$ be an amenable action on a separable, exact \Cs-algebra, and $\beta:G\curvearrowright B$ a strongly stable, isometrically shift-absorbing action on a separable \Cs-algebra.\
Let $\f,\psi:A\to B_{\infty,\beta}$ be nuclearly liftable, equivariant $\ast$-homomorphisms.\
If $\f$ approximately 1-dominates $\psi$ as ordinary $\ast$-homomorphisms, then $(\f,\1)$ approximately 1-dominates $(\psi,\1)$.
\end{lemma}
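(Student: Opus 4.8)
The plan is to reduce the equivariant $1$-domination to the construction of a single approximately $\beta_\infty$-equivariant near-isometry and then conjugate the given contraction by it. Concretely, fix a compact set $K\subseteq G$, a finite set $\mathcal{F}\subseteq A$, an $\e>0$, and a contraction $b\in B_{\infty,\beta}$. It suffices to produce $V\in B_{\infty,\beta}$ such that $V^*V$ acts as an approximate unit on a $\beta_\infty$-invariant separable subalgebra $D\subseteq B_{\infty,\beta}$ containing $b$, such that $\max_{g\in K}\|\beta_{\infty,g}(V)-V\|$ is small, and such that $\|V^*\f(a)V-\psi(a)\|$ is small for all $a\in\mathcal{F}$. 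Indeed, setting $c=Vb$ gives $c^*\f(a)c=b^*V^*\f(a)Vb\approx b^*\psi(a)b$, which is \eqref{eq:e2}, while $c^*\beta_{\infty,g}(c)=b^*V^*\beta_{\infty,g}(V)\beta_{\infty,g}(b)$; decomposing $V^*\beta_{\infty,g}(V)=V^*V+V^*(\beta_{\infty,g}(V)-V)$ and using $V^*Vb\approx b$, $V^*V\beta_{\infty,g}(b)\approx\beta_{\infty,g}(b)$ (legitimate since $\{\beta_{\infty,g}(b)\mid g\in K\}$ is compact, hence contained in such a $D$) yields $c^*\beta_{\infty,g}(c)\approx b^*\beta_{\infty,g}(b)$ for $g\in K$, which is \eqref{eq:e1} with the trivial cocycles.

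The near-isometry $V$ is obtained by averaging a non-equivariant intertwiner over $G$, using amenability of $\alpha$ on the domain side and isometric shift-absorption of $\beta$ on the target side. First, applying the hypothesis that $\f$ approximately $1$-dominates $\psi$ as ordinary $\ast$-homomorphisms to a contraction $e$ with $e\psi(a)e\approx\psi(a)$ on $\mathcal{F}$ (taken from an approximate unit for the hereditary subalgebra generated by $\psi(\mathcal{F})$) produces an approximate intertwiner $V_0\in B_{\infty,\beta}$ with $V_0^*\f(a)V_0\approx\psi(a)$ for $a\in\mathcal{F}$. Next, Definition \ref{def:amenability} provides a contraction $\zeta\in\C_c(G,A)$ that is approximately central, normalised with $\langle\zeta,\zeta\rangle\approx\1$, and approximately invariant in the Følner sense that $\max_{g\in K}\|(\zeta-\bar{\alpha}_g(\zeta))x\|_2$ is small on the relevant finite set of $x$; equivariance of $\f$ transports this into approximately invariant, approximately central data inside $B_{\infty,\beta}$. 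Finally, Definition \ref{def:isa} (equivalently Remark \ref{rem:isa}) supplies the equivariant isometric map $\fs:(\Hil_G,\lambda)\to(F_{\infty,\beta}(B),\tilde{\beta}_\infty)$. Lifting $\fs$ into $B_{\infty,\beta}\cap B'$ and spreading $\beta_{\infty,g}(V_0)$ across $g\in G$ with weights assembled from $\zeta$ and the Cuntz-type isometries $\fs(\lambda_g\xi)$, one builds the averaged element $V$, carried out at the level of representing sequences so as to remain inside $B_{\infty,\beta}$ (here strong stability of $\beta$ and nuclear liftability of $\f,\psi$ drive the reindexing and diagonal-sequence argument and let us arrange that $V^*V$ is an approximate unit on $D$). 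The relation $\fs(\xi)^*\fs(\eta)=\langle\xi,\eta\rangle\1$ collapses the cross terms so that $V^*V\approx\f(\langle\zeta,\zeta\rangle)$, which acts as a unit on the relevant subalgebra, and keeps $V^*\f(a)V\approx\psi(a)$ up to the approximate centrality of $\zeta$, while the intertwining identity $\tilde{\beta}_{\infty,g}\circ\fs=\fs\circ\lambda_g$ together with the Følner property of $\zeta$ gives $\max_{g\in K}\|\beta_{\infty,g}(V)-V\|$ small.

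The main obstacle is that the only available source of approximate $G$-invariance is the amenability of the \emph{domain} action $\alpha$, not amenability of $G$ itself; there are therefore no almost-invariant vectors in $\Hil_G$ to exploit, and all invariance must be routed through the $A$-valued Følner function $\zeta$ and the equivariance of the shift-absorption map. The delicate point is to organise the averaging so that, simultaneously, the Cuntz relation kills the off-diagonal terms and restores the correct normalisation $V^*V\approx\1$ on $D$, the near-centrality of $\zeta$ preserves the intertwining $V^*\f(a)V\approx\psi(a)$, and the Følner estimates control the equivariance defect $\|\beta_{\infty,g}(V)-V\|$ uniformly over the compact set $K$ — all while keeping $V$ a genuine near-isometry on a separable subalgebra large enough to contain $b$ and its $K$-orbit, which is precisely what strong stability and the passage to relative central sequence algebras are used to guarantee.
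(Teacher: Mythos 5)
A preliminary remark on the comparison itself: the paper contains no proof of this statement. Lemma \ref{lem:4.2} is imported verbatim from \cite[Lemma 4.2]{GS22b} (it is introduced with ``for the reader's convenience, we recall a result from \cite{GS22b}''), so your attempt can only be measured against the method of that reference, whose general averaging philosophy (QAP function $\zeta$ for $\alpha$, Cuntz-type map $\fs$ from shift-absorption, collapse of cross terms) your sketch does echo.

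However, the reduction your sketch rests on contains a genuine gap: the element $V$ is required to have two incompatible properties. You demand that $V^*V$ act as an approximate unit on a separable $\beta_\infty$-invariant subalgebra $D$ containing the \emph{arbitrary} contraction $b$, while you assert that the construction yields $V^*V\approx\f(\langle\zeta,\zeta\rangle)$. Since $\langle\zeta,\zeta\rangle\in A$, the element $\f(\langle\zeta,\zeta\rangle)$ lies in $\f(A)$, so it can act as an approximate unit only on elements of $\overline{\f(A)B_{\infty,\beta}}$; but in Definition \ref{def:wc} the contraction $b$ is arbitrary, and condition \eqref{eq:e1} involves only $b^*\beta_{\infty,g}(b)$, with no reference to $\f$ or $\psi$, so it is a nontrivial constraint precisely for those $b$ which $\f(A)$ does not see. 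Concretely, take $\f=\psi=0$ (allowed by the hypotheses; both the hypothesis and the conclusion of the lemma then hold trivially with $c=b$): your $c=Vb$ satisfies $c^*c=b^*V^*Vb\approx 0$, so \eqref{eq:e1} fails at $g=1_G$ whenever $b\neq 0$. The same failure occurs for any nonzero $b$ with $\f(A)b=0$.

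Two further points are facets of the same problem. First, a single averaging formula does not deliver your estimates simultaneously: with weights $\f(\zeta(h))$, which is what produces $V^*V\approx\f(\langle\zeta,\zeta\rangle)$, the computation of $V^*\f(a)V$ terminates, after the Cuntz collapse and an application of ordinary $1$-domination, in integrals of the form $\int_G \f(\zeta(h))^*\psi(a)\f(\zeta(h))\,dh$; approximate centrality of $\zeta$ is a statement inside $A$ and gives no commutation between $\f(A)$ and $\psi(A)$ inside $B_{\infty,\beta}$, so $V^*\f(a)V\approx\psi(a)$ does not follow. Second, Definition \ref{def:amenability} provides invariance of $\zeta$ only relative to right multiplication by elements of $A$; transported through $\f$, this controls $(\beta_{\infty,g}(V)-V)\f(a)$, but neither $\|\beta_{\infty,g}(V)-V\|$ in norm nor the error term $b^*V^*(\beta_{\infty,g}(V)-V)\beta_{\infty,g}(b)$ that your verification of \eqref{eq:e1} actually needs, since there the defect is multiplied by the arbitrary element $\beta_{\infty,g}(b)$ rather than by something in $\overline{\f(A)B_{\infty,\beta}}$. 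It is symptomatic that your argument never makes essential use of two hypotheses of the lemma, namely strong stability of $\beta$ and nuclear liftability of $\f$ and $\psi$: this is exactly the extra structure that the proof in \cite{GS22b} leans on at the places where your sketch breaks down, that is, to handle the part of $b$ invisible to $\f$ and $\psi$, and to convert amenability of $\alpha$ into estimates against arbitrary elements of $B_{\infty,\beta}$.
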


The following can be considered as an ideal-related version of \cite[Lemma 4.3]{GS22b}.

\begin{lemma}\label{lem:4.3}
Let $\alpha:G\curvearrowright A$ be an amenable action on a separable, exact \Cs-algebra, and $\beta:G\curvearrowright B$ an isometrically shift-absorbing, strongly stable, equivariantly $\O2$-stable action on a separable \Cs-algebra.\
Let $\f,\psi:(A,\alpha)\to(B_{\infty,\beta},\beta_\infty)$ be two nuclearly liftable, equivariant $\ast$-homomorphisms.\
If $\I(\f)=\I(\psi)$ as maps into $\I(B_\infty)$, then $(\f,\1)$ and $(\psi,\1)$ are properly unitarily equivalent.
\end{lemma}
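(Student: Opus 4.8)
The aim is to produce a single $\beta_\infty$-fixed unitary $u\in\U(\1+B_{\infty,\beta})$ with $\psi=\Ad(u)\circ\f$; since the cocycles are trivial, the remaining condition in the definition of proper unitary equivalence reads $\1=u\beta_{\infty,g}(u)^*$, which merely asks that $u$ be fixed. First I would reduce to mutual approximate 1-domination. The algebra $B$ is stable (by strong stability) and $\Oinf$-stable (by isometric shift-absorption, see Remark \ref{rem:isa}), while $\f,\psi$ are nuclear since they are nuclearly liftable. As $\I(\f)=\I(\psi)$ gives both $\I(\f)\geq\I(\psi)$ and $\I(\psi)\geq\I(\f)$, two applications of Theorem \ref{thm:weak-cont} show that $\f$ and $\psi$ approximately 1-dominate each other as ordinary $\ast$-homomorphisms. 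Since $\alpha$ is amenable and $\beta$ is strongly stable and isometrically shift-absorbing, Lemma \ref{lem:4.2} applies in both directions and upgrades this to mutual approximate 1-domination of the proper cocycle morphisms $(\f,\1)$ and $(\psi,\1)$ into $(B_{\infty,\beta},\beta_\infty)$; it is precisely Lemma \ref{lem:4.2} that supplies the passage to equivariant, $\beta$-continuous data.

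Next I would assemble the absorption machinery inside $B_{\infty,\beta}$. Strong stability provides $\beta_\infty$-fixed isometries $s_1,s_2$ with $s_1s_1^*+s_2s_2^*=\1$, so Cuntz sums (Definition \ref{def:cuntz_sum}) are available and the flip $U=s_1s_2^*+s_2s_1^*$ is a $\beta_\infty$-fixed unitary witnessing a proper unitary equivalence between $(\f\oplus\psi,\1)$ and $(\psi\oplus\f,\1)$. Using equivariant $\O2$-stability of $\beta$ together with the separability of $A$ and the nuclear liftability of $\f,\psi$, a reindexing argument yields a unital $\beta_\infty$-fixed copy of $\O2$ in $B_{\infty,\beta}$ that commutes with the separable subalgebra generated by $\f(A)\cup\psi(A)$. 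This copy of $\O2$ lets both $\f$ and $\psi$ absorb Cuntz summands, and combined with the mutual domination above it should give proper unitary equivalences of $(\f,\1)$ with $(\f\oplus\psi,\1)$ and of $(\psi,\1)$ with $(\psi\oplus\f,\1)$. Chaining these two equivalences through the flip and invoking transitivity then shows that $(\f,\1)$ and $(\psi,\1)$ are properly unitarily equivalent.

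The hard part is the absorption step, say the proper unitary equivalence of $(\f,\1)$ with $(\f\oplus\psi,\1)$ by a single fixed unitary. This is the sequence-algebra counterpart of the mechanism behind Lemma \ref{lem:2.8}, which cannot be cited directly: it only outputs strong asymptotic unitary equivalence, and it requires a separable target, whereas $B_{\infty,\beta}$ is not separable. The plan is to first extract, for each finite $\mathcal{F}\subseteq A$, compact $K\subseteq G$, and $\e>0$, a $\beta_\infty$-fixed unitary that conjugates $\f$ to $\f\oplus\psi$ on $\mathcal{F}$ up to $\e$, using approximate 1-domination to produce an approximately $\beta_\infty$-fixed isometry carrying $\psi$ into $\f$ and the fixed copy of $\O2$ in the relative commutant to rotate it into place; and then to collapse this $\e$-indexed family of approximate fixed unitaries into one exact fixed unitary in $\U(\1+B_{\infty,\beta})$ by a diagonal reindexing over a countable dense subset of $A$ (a Kirchberg-type $\e$-test). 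The delicate point throughout is to guarantee that the reindexed unitary is genuinely $\beta_\infty$-invariant rather than only asymptotically so, which is exactly where separability of $A$, nuclear liftability, and the $\beta$-continuity built into $B_{\infty,\beta}$ are used.
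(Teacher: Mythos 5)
Your first half coincides with the paper's proof: Theorem \ref{thm:weak-cont} (using that $B$ is stable and $\Oinf$-stable, and that nuclear liftability gives nuclearity) yields mutual approximate 1-domination of $\f$ and $\psi$ as ordinary $\ast$-homomorphisms, and Lemma \ref{lem:4.2} upgrades this to the pairs $(\f,\1)$ and $(\psi,\1)$; you also correctly notice that Lemma \ref{lem:2.8} cannot be invoked verbatim because $B_{\infty,\beta}$ is not separable (nor is $\beta_\infty$ strongly stable). The gaps are in your replacement for Lemma \ref{lem:2.8}. The most concrete one: the flip $U=s_1s_2^*+s_2s_1^*$ is a $\beta_\infty$-fixed unitary coming from $\M(B)^{\beta}\to\M(B_{\infty,\beta})^{\beta_\infty}$, but it is \emph{not} an element of $\U(\1+B_{\infty,\beta})$, so it witnesses only multiplier unitary equivalence of $(\f\oplus\psi,\1)$ and $(\psi\oplus\f,\1)$, not proper unitary equivalence. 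Since $w_2Uw_1\in U+B_{\infty,\beta}$ whenever $w_1,w_2\in\U(\1+B_{\infty,\beta})$, chaining your three equivalences produces only a fixed \emph{multiplier} unitary conjugating $\f$ to $\psi$. That is strictly weaker than the statement, and the difference is not cosmetic: the application of this lemma in Theorem \ref{thm:existence} goes through Remark \ref{rem:thm4.10}, i.e.\ \cite[Theorem 4.10]{Sza21}, which needs unitaries in $\U(\1+B_\infty)$. In the paper the flip is corrected by \cite[Lemma 2.7]{GS22b}, which replaces $\Ad(U)$ by an asymptotic path of proper unitaries --- and that lemma again requires a separable, strongly stable target, so your plan meets the same obstruction a second time without addressing it.

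Second, your ``hard part'' --- producing approximately $\beta_\infty$-fixed proper unitaries implementing $(\f,\1)\sim(\f\oplus\psi,\1)$ and collapsing them by an $\e$-test --- is in effect a proposal to reprove \cite[Lemma 2.8]{GS22b} from scratch inside a non-separable algebra; the one-line mechanism you sketch (an approximately fixed isometry ``rotated into place'' by the $\O2$) \emph{is} the content of that absorption theorem and is not supplied. (Also, ``a unital $\beta_\infty$-fixed copy of $\O2$ in $B_{\infty,\beta}$'' cannot be meant literally: $B$ is stable here, so $B_{\infty,\beta}$ is non-unital; what one can arrange is a unital copy of $\O2$ in $\M(D)^{\beta_\infty}$ for a suitable separable subalgebra $D$, via $F(\,\cdot\,,B_{\infty,\beta})$ and the multiplication map.) The paper's route avoids all of this by separabilizing \emph{before} absorbing: Lemma \ref{lem:inductivelim}\ref{item:first} corestricts the mutual 1-domination to a separable, $\beta_\infty$-invariant $D_0\subseteq B_{\infty,\beta}$ containing both images; Lemma \ref{lem:inductivelim}\ref{item:second} together with \cite[Lemma 1.21]{GS22b} enlarges $D_0$ to a separable $D_1$ on which the restricted action is strongly stable; \cite[Lemma 2.12]{Sza18} and the multiplication map then give a separable $D$ with a unital copy of $\O2$ in $\M(D)^{\beta_\infty}$ commuting with the images. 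Lemma \ref{lem:2.8} applies verbatim to the corestrictions, and a single diagonal-sequence argument at the very end --- using that the cocycles are trivial, so the implementing path is asymptotically $\beta_\infty$-invariant --- converts the resulting strong asymptotic unitary equivalence into one exactly $\beta_\infty$-fixed unitary in $\U(\1+B_{\infty,\beta})$. If you want to rescue your outline, this corestriction trick is the missing idea; it is precisely what Lemma \ref{lem:inductivelim} was proved for.
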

\begin{proof}
It follows from Theorem \ref{thm:weak-cont} that $\f$ and $\psi$ approximately 1-dominate each other.\
Hence, we conclude with Lemma \ref{lem:4.2} that $(\f,\1)$ and $(\psi,\1)$ approximately 1-dominate each other.\
By Lemma \ref{lem:inductivelim}\ref{item:first}, there exists a separable, $\beta_\infty$-invariant \Cs-subalgebra $D_0$ of $B_{\infty,\beta}$ containing the image of $\f$ and $\psi$, and such that $(\f,\1)$ and $(\psi,\1)$ approximately 1-dominate each other when corestricted to $D_0$.\
From Lemma \ref{lem:inductivelim}\ref{item:second}, we have that $\kappa \circ (\f,\1)$ and $\kappa \circ (\psi,\1)$ approximately 1-dominate each other, where $\kappa:(D_0,\beta_\infty)\to(D_0\otimes\K,\beta_\infty\otimes\id_{\K})$ is the equivariant inclusion given by $\kappa(d)=d\otimes e_{1,1}$ for all $d\in D_0$.\
Now, using that $\beta$ is strongly stable, we apply \cite[Lemma 1.21]{GS22b} to infer that there exists an equivariant $\ast$-homomorphism $\chi:(D_0\otimes\K,\beta_\infty\otimes\id_{\K})\to(B_{\infty,\beta},\beta_\infty)$ such that $\chi(d\otimes e_{1,1})=d$ for all $d\in D_0$.\
These maps then fit into the following commutative diagram,
\[\begin{tikzcd}
	{(D_0,\beta_\infty)} && {(B_{\infty,\beta},\beta_\infty)} \\
	& {(D_0 \otimes \K,\beta_\infty\otimes\id_{\K})}
	\arrow[from=1-1, to=1-3]
	\arrow["{\kappa}"'{pos=0.3}, from=1-1, to=2-2]
	\arrow["{\chi}"'{pos=0.7}from=2-2, to=1-3]
\end{tikzcd}\]
Let $D_1$ denote the image of $D_0\otimes\K$ in $B_{\infty,\beta}$ under $\chi$.\
It follows that $\beta_\infty|_{D_1}$ is strongly stable, and $(\f,\1)$, $(\psi,\1)$ approximately 1-dominate each other when corestricted to $D_1=\chi(D_0\otimes\K)$.\
Moreover, as $\beta\cc\beta\otimes\id_{\O2}$, it follows from \cite[Lemma 2.12]{Sza18} that there exists a unital embedding of $\O2$ into $F(D_1,B_{\infty,\beta})^{\tilde{\beta}_\infty}$.\
Consider the canonical commutative diagram of equivariant $\ast$-homomorphisms given by
\[\begin{tikzcd}
	{(D_1,\beta_\infty)} && {(B_{\infty,\beta},\beta_\infty)} \\
	& {(D_1 \otimes_{\max}F(D_1,B_{\infty,\beta}),\beta_\infty\otimes\tilde{\beta}_\infty)}
	\arrow[from=1-1, to=1-3]
	\arrow["{\id_{D_1}\otimes\1}"'{pos=0.3}, from=1-1, to=2-2]
	\arrow["{\rho}"'{pos=0.7}from=2-2, to=1-3]
\end{tikzcd}\]
where $\rho$ is the natural $\ast$-homomorphism defined as $\rho(d\otimes (x+B_{\infty,\beta}\cap D_1^\perp))=dx$, for all $d\in D_1$, $x\in B_{\infty,\beta}\cap D_1'$.\
Denote by $D$ the image of $D_1\otimes\O2$ under $\rho$.\
It follows that there exists a unital copy of $\O2$ in $\M(D)^{\beta_\infty}$ that commutes with the image of $\f$ and $\psi$, $\beta_\infty|_{D}$ is strongly stable, and $(\f,\1)$, $(\psi,\1)$ approximately 1-dominate each other when corestricted to $D$.\
We may now apply Lemma \ref{lem:2.8} and conclude that $(\f,\1)$ and $(\psi,\1)$ are strongly asymptotically unitarily equivalent when corestricted to $D$.\
Since the cocycles are trivial, the norm-continuous path implementing the equivalence is asymptotically $\beta_\infty|_{D}$-invariant.\
By performing a standard diagonal sequence argument inside $B_\infty$, one can show that $\f$ and $\psi$ are properly unitarily equivalent via a $\beta_\infty$-invariant unitary, and therefore $(\f,\1)$ and $(\psi,\1)$ are properly unitarily equivalent.
\end{proof}

\begin{lemma}\label{lem:trick}
Let $\alpha:G \curvearrowright A$ be an amenable action on a separable, exact $\Cs$-algebra, and $\beta:G \curvearrowright B$ a strongly stable, equivariantly $\O2$-stable and isometrically shift-absorbing action on a separable $\Cs$-algebra.\
Then, for every equivariant $\Cu$-morphism $\Phi:(\I(A),\alpha^{\sharp})\to(\I(B),\beta^{\sharp})$ there exists an equivariant, nuclearly liftable $\ast$-homomorphism
\begin{equation*}
\psi: (A,\alpha) \to (B_{\infty,\beta}, \beta_\infty)
\end{equation*}
that satisfies $\I(\psi) = \I(\iota_{\infty}) \circ \Phi$ as a map with range in $B_\infty$.
\end{lemma}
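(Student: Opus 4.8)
The plan is to promote the map on the suspension furnished by Lemma~\ref{lem:SA} to a representation of the crossed product $SA\rtimes_\tau\Z$, exploit the identification $(SA\rtimes_\tau\Z,S\alpha\rtimes\Z)\cong(A\otimes\C(\T)\otimes\K,\alpha\otimes\id)$ of Remark~\ref{rem:trick}, and then cut down to a corner to return to $A$. First I would apply Lemma~\ref{lem:SA} to obtain a nuclearly liftable, equivariant $\ast$-homomorphism $\psi_0:(SA,S\alpha)\to(B_{\infty,\beta},\beta_\infty)$ such that $\psi_0(f\otimes a)$ is a full element of $\I(\iota_\infty)(\Phi(\overline{AaA}))$ for every positive non-zero $f\in\C_0(\R)$ and every positive $a\in A$.

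The key step is to produce a $\beta_\infty$-invariant unitary implementing the shift $\tau=\sigma\otimes\id_A$. Since $\tau$ commutes with $S\alpha=\id_{\C_0(\R)}\otimes\alpha$, the composite $\psi_0\circ\tau$ is again a nuclearly liftable, equivariant $\ast$-homomorphism, and I claim that $\I(\psi_0)=\I(\psi_0\circ\tau)$ as maps into $B_\infty$. Indeed, by Remark~\ref{rem:prodprim} we have $\Prim(SA)\cong\R\times\Prim(A)$, and the ideals $\overline{SA(f\otimes a)SA}=\overline{\C_0(\R)f\C_0(\R)}\otimes\overline{AaA}$ form a basis of ``open boxes'' whose suprema exhaust $\I(SA)$. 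On each such box both $\I(\psi_0)$ and $\I(\psi_0\circ\tau)$ take the value $\I(\iota_\infty)(\Phi(\overline{AaA}))$ --- for the second map because $\sigma(f)$ is still positive and non-zero, so the fullness statement depends only on $a$ --- whence, both being supremum-preserving $\Cu$-morphisms, they agree on all of $\I(SA)$. Applying Lemma~\ref{lem:4.3} with $(SA,S\alpha)$ in place of $(A,\alpha)$ (note that $SA$ is separable and exact and that $S\alpha$ is amenable because $\alpha$ is) then yields a $\beta_\infty$-invariant unitary $W\in\U(\1+B_{\infty,\beta})$ with $\psi_0\circ\tau=\Ad(W)\circ\psi_0$.

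Now $(\psi_0,W)$ is a covariant representation of the $\Z$-system $(SA,\tau)$, and its integrated form is a $\ast$-homomorphism $\Psi:SA\rtimes_\tau\Z\to B_{\infty,\beta}$; it is equivariant since $\psi_0$ is equivariant and $W$ is $\beta_\infty$-invariant, so via Remark~\ref{rem:trick} it reads as an equivariant map $\Psi:(A\otimes\C(\T)\otimes\K,\alpha\otimes\id)\to(B_{\infty,\beta},\beta_\infty)$. I then set $\psi=\Psi\circ\kappa_0$, where $\kappa_0:(A,\alpha)\to(A\otimes\C(\T)\otimes\K,\alpha\otimes\id)$, $\kappa_0(a)=a\otimes 1_{\C(\T)}\otimes e_{1,1}$, is the equivariant corner embedding. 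Because $1_{\C(\T)}\otimes e_{1,1}$ is a full projection, one has $\I(\kappa_0)(I)=I\otimes\C(\T)\otimes\K$, which corresponds under Remark~\ref{rem:trick} to $SI\rtimes_\tau\Z$ for the $\tau$-invariant ideal $SI=\C_0(\R)\otimes I$; since the $W^k$ are unitary multipliers, the ideal generated by $\Psi(SI\rtimes_\tau\Z)$ coincides with that generated by $\psi_0(SI)$, so $\I(\Psi)(I\otimes\C(\T)\otimes\K)=\I(\psi_0)(SI)$. Writing $SI=\sup_n\C_0(\R)\otimes\overline{Aa_nA}$ for positive $a_n$ with $\overline{Aa_nA}\uparrow I$, the fullness property together with supremum-preservation of $\I(\iota_\infty)\circ\Phi$ gives $\I(\psi_0)(SI)=\I(\iota_\infty)(\Phi(I))$, and therefore $\I(\psi)=\I(\iota_\infty)\circ\Phi$.

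For nuclear liftability, since $A$ is separable and exact it suffices to show that $\psi$ is nuclear, and for this I would argue that $\Psi$, being the integrated form over the amenable group $\Z$ of the nuclear map $\psi_0$, is itself nuclear: composing the standard F\o lner c.c.p.\ approximations of $SA\rtimes_\tau\Z$ with the amplification of $\psi_0$ and a compression by the row of unitaries $(W^k)$ exhibits $\Psi$ as a point-norm limit of nuclear maps, so $\psi=\Psi\circ\kappa_0$ is nuclear as well. I expect the main obstacle to be the construction of $W$, that is, establishing $\I(\psi_0)=\I(\psi_0\circ\tau)$ on the potentially highly non-Hausdorff space $\Prim(SA)$ so that Lemma~\ref{lem:4.3} applies; the nuclearity of the integrated form is by comparison a routine consequence of the amenability of $\Z$.
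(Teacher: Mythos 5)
Your proposal is correct and follows essentially the same route as the paper's own proof: Lemma~\ref{lem:SA} on the suspension, equality of the induced $\Cu$-morphisms of $\psi_0$ and $\psi_0\circ\tau$ via ideals generated by elementary tensors, Lemma~\ref{lem:4.3} to produce the $\beta_\infty$-invariant unitary, the integrated form on $SA\rtimes_\tau\Z$, and cutting down by a full projection under the identification of Remark~\ref{rem:trick}. The only cosmetic differences are that you verify $\I(\psi)=\I(\iota_\infty)\circ\Phi$ ideal-theoretically rather than elementwise, and that you sketch the standard Fej\'er/F\o lner argument for nuclearity of the integrated form where the paper simply cites \cite[Lemma 6.10]{Gab20}.
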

\begin{proof}
Consider the automorphisms $\sigma$ and $\tau=\sigma\otimes\id_A$ of $\C_0(\R)$ and $SA$, respectively, as in Remark \ref{rem:trick}.\
The assumptions of Lemma \ref{lem:SA} are fullfilled, hence there exists a nuclearly liftable, equivariant $\ast$-homomorphism
\begin{align*}
\psi_1: (SA,S\alpha) \to (B_{\infty,\beta},\beta_\infty)
\end{align*}
with the property that for each positive element $a\in A$ and positive non-zero function $f\in\C_0(\R)$,  $\psi_1(f\otimes a)$ is a full elemenet of $\I(\iota_{\infty})(\Phi(\overline{AaA}))$.\
Since also $\psi_1\circ\tau(f\otimes a)=\psi_1(\sigma(f)\otimes a)$ is full in $\I(\iota_{\infty})(\Phi(\overline{AaA}))$, it follows that $\I(\psi_1)$ and $\I(\psi_1\circ\tau)$, where $\psi_1$ is viewed as a map with range in $B_\infty$, agree on ideals generated by elements of the form $f\otimes a$, with $f$ and $a$ as before.\
Furthermore, since $\I(\psi_1)(I)$ is generated by $\psi_1(I_+)$ for any ideal $I\in\I(SA)$ (see \cite[Lemma 2.12(i)]{Gab20}), and $\I(\psi_1)$ preserves suprema, it follows that $\I(SA)$ is completely determined by ideals generated by $f\otimes a$ for positive non-zero $f\in\C_0(\R)$ and $a\in A_+$.\
Hence, we may conclude that $\I(\psi_1)=\I(\psi_1 \circ \tau)$.

We can therefore apply Lemma \ref{lem:4.3} to the equivariant $\ast$-homomorphisms $\psi_1$ and $\psi_1 \circ \tau$, and obtain that $(\psi_1,\1)$ and $(\psi_1 \circ \tau,\1)$ are properly unitarily equivalent via a unitary $u\in\U(\1+B_{\infty,\beta}^{\beta_\infty})$.

From the universal property of crossed products, there exists a $\ast$-homomorphism
\begin{equation*}
\psi_0: SA\rtimes_{\tau}\Z  \to B_{\infty,\beta}, \quad \psi_0|_{SA}=\psi_1,
\end{equation*}
which is also nuclear as a map $SA\rtimes_{\tau}\Z  \to B_{\infty}$ by \cite[Lemma 6.10]{Gab20}.\
Let us show that $\psi_0$ is $(S\alpha\rtimes\Z)$-to-$\beta_\infty$-equivariant\footnote{Recall from Remark \ref{rem:trick} that $S\alpha\rtimes\Z:G\curvearrowright SA\rtimes_{\tau}\Z$ is the action that extends $S\alpha$ by acting trivially on the copy of $\Z$.}.\
Let $v$ be the canonical unitary for the crossed product $SA\rtimes_{\tau}\Z$, then we have
\begin{align*}
\psi_0 \circ (S\alpha\rtimes\Z)_g(x v^n) &= \psi_0(S\alpha_g(x)v^n) \\
		&= \psi_1(S\alpha_g(x))u^n \\
		&= \beta_{\infty,g}(\psi_1(x)u^n) \\
		&= \beta_{\infty,g} \circ \psi_0(xv^n)
\end{align*}
for all $x\in SA$, $n\in\Z$ and $g\in G$.

Recall from Remark \ref{rem:trick} that there exists a natural $\ast$-isomorphism
\begin{equation*}
\theta: (\C_0(\R)\rtimes_{\sigma}\Z)\otimes A  \to  SA\rtimes_{\tau}\Z,
\end{equation*}
and the action $\id\otimes\alpha$ on the left corresponds to $S\alpha\rtimes\Z$, which acts trivially on the copy of $\Z$, on the right.\
For a full projection $p\in \C_0(\R)\rtimes_{\sigma}\Z\cong\C(\T)\otimes\K$, the $\ast$-homomorphism
\begin{equation*}
\psi:  A \to B_{\infty,\beta}, \quad a \mapsto \psi_0(\theta(p \otimes a)),
\end{equation*}
is nuclear as a map with range in $B_\infty$, and equivariant with respect to $\alpha$ on the left and $\beta_\infty$ on the right as $A\hookrightarrow \C(\T)\otimes\K\otimes A$ is $\alpha$-to-$\id\otimes\alpha$ equivariant, and
\begin{align*}
\psi \circ \alpha_g(a) &= \psi_0(\theta(p \otimes \alpha_g(a))) \\
							&= \psi_0(\theta \circ S\alpha_g(p \otimes a)) \\
							&= \psi_0((S\alpha\rtimes\Z)_g \circ \theta(p \otimes a)) \\
							&= \beta_{\infty,g} \circ \psi(a)
\end{align*}
for all $a\in A$, and $g\in G$.

We want to show that $\I(\psi) = \I(\iota_{\infty}) \circ \Phi$.\
Equivalently, we want to prove that for any positive element $a\in A$, its image $\psi(a)$ is contained in $\I(\iota_{\infty})(\Phi(\overline{AaA}))$ and it is full.

Pick a positive element $a\in A$, together with a positive non-zero function $f\in\C_0(\R)$.\
Denote by $w$ the canonical unitary in $\M(\C_0(\R)\rtimes_\sigma\Z)$.\
It follows that for every $n\in\Z$ we have
\begin{equation*}
\psi_0(\theta(fw^n\otimes a)) = \psi_1(f\otimes a)u^n \in \I(\iota_{\infty})(\Phi(\overline{AaA})).
\end{equation*}
Hence, we may conclude that
\begin{equation*}
\psi(a) = \psi_0(\theta(p\otimes a)) \in \I(\iota_{\infty})(\Phi(\overline{AaA})).
\end{equation*}
Since $\psi_1(f\otimes a)$ is full in $\I(\iota_{\infty})(\Phi(\overline{AaA}))$, and $p$ is full in $\C_0(\R)\rtimes_{\sigma}\Z$, we get the following inclusion,
\begin{align*}
\I(\iota_{\infty})(\Phi(\overline{AaA})) &\subseteq \overline{B_{\infty} \psi_1(f\otimes a) B_{\infty}} \\
&= \overline{B_{\infty} \psi_0(\theta(f\otimes a)) B_{\infty}} \\
&\subseteq \overline{B_{\infty} \psi_0(\theta(p\otimes a)) B_{\infty}} \\
&= \overline{B_{\infty} \psi(a) B_{\infty}}.
\end{align*}
Thus, $\psi(a)$ is full in $\I(\iota_{\infty})(\Phi(\overline{AaA}))$.
\end{proof}

\begin{remark}
Let $B$ be a \Cs-algebra.\
A sequence $\eta:\N\to\N$ such that $\lim_{n\to\infty}\eta(n)=\infty$ induces a $\ast$-endomorphism $\eta^*$ on the sequence algebra of $B$ as follows,
\begin{equation*}
\eta^*:B_{\infty}\to B_{\infty}, \quad \left[(x_n)_{n\in\N}\right] \mapsto \left[(x_{\eta(n)})_{n\in\N}\right].
\end{equation*}
Note that, for any action $\beta:G\curvearrowright B$, the $\ast$-endomorphism $\eta^*$ is equivariant with respect to $\beta_\infty$, and therefore restricts to an equivariant $\ast$-endomorphism on $B_{\infty,\beta}$.
\end{remark}

\begin{remark}\label{rem:thm4.10}
Let $\alpha:G\curvearrowright A$ and $\beta:G\curvearrowright B$ be two actions on \Cs-algebras with $A$ separable.\
Let $(\psi,\bbv):(A,\alpha)\to(B_{\infty,\beta},\beta_\infty)$ be a proper cocycle morphism.\
By \cite[Theorem 4.10]{Sza21}, the following are equivalent:
\begin{enumerate}[label=(\roman*),leftmargin=*]
\item For every sequence $\eta:\N\to\N$ such that $\lim_{n\to\infty}\eta(n)=\infty$, $(\psi,\bbv)$ is properly unitarily equivalent to $\eta^* \circ (\psi,\bbv)$;
\item $(\psi,\bbv)$ is properly unitarily equivalent to $\iota_\infty \circ (\f,\bbu)$ for a proper cocycle morphism $(\f,\bbu):(A,\alpha)\to(B,\beta)$.
\end{enumerate}
\end{remark}

We are now able to prove the existence result that will ultimately lead to our classification theorem.

\begin{theorem}[Existence]\label{thm:existence}
Let $\alpha:G\curvearrowright A$ be an amenable action on a separable, exact $\Cs$-algebra, and $\beta:G\curvearrowright B$ an equivariantly $\O2$-stable, isometrically shift-absorbing action on a separable $\Cs$-algebra.\
Then, for every equivariant $\Cu$-morphism $\Phi:(\I(A),\alpha^{\sharp})\to(\I(B),\beta^{\sharp})$, there exists a nuclear proper cocycle morphism $(\f,\bbu):(A,\alpha)\to(B,\beta)$ such that $\I(\f)=\Phi$.
\end{theorem}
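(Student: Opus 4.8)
The plan is to realise $\Phi$ first by a map into the sequence algebra and then to descend. Concretely, I would produce a nuclearly liftable, equivariant $\ast$-homomorphism $\psi\colon(A,\alpha)\to(B_{\infty,\beta},\beta_\infty)$ whose induced map into $B_\infty$ satisfies $\I(\psi)=\I(\iota_\infty)\circ\Phi$, and then apply the sequential criterion of Remark \ref{rem:thm4.10} to the proper cocycle morphism $(\psi,\1)$. Once its hypothesis is verified, this criterion yields a proper cocycle morphism $(\f,\bbu)\colon(A,\alpha)\to(B,\beta)$ with $\iota_\infty\circ(\f,\bbu)\pue(\psi,\1)$. Since properly unitarily equivalent $\ast$-homomorphisms induce the same $\Cu$-morphism and $\I(\iota_\infty)$ is injective, the resulting identity $\I(\iota_\infty)\circ\I(\f)=\I(\psi)=\I(\iota_\infty)\circ\Phi$ forces $\I(\f)=\Phi$, which is the assertion.

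To build $\psi$ I would treat the strongly stable case first. If $\beta$ is strongly stable, Lemma \ref{lem:trick} directly provides a nuclearly liftable, equivariant $\ast$-homomorphism $\psi$ with $\I(\psi)=\I(\iota_\infty)\circ\Phi$. For general $\beta$, I would apply Lemma \ref{lem:trick} instead to the action $\beta\otimes\Ad(\lambda^\infty)\colon G\curvearrowright B\otimes\K(\Hil_G^\infty)$, which is strongly stable (as $\lambda^\infty\otimes\1\cong\lambda^\infty$), remains equivariantly $\O2$-stable and isometrically shift-absorbing, and whose induced action on the ideal lattice is equivariantly isomorphic to $(\I(B),\beta^{\sharp})$; this yields a nuclearly liftable, equivariant $\ast$-homomorphism into the corresponding sequence algebra realising the transported $\Cu$-morphism. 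I would then fold the auxiliary copy of $\K(\Hil_G^\infty)$ back into $B$ by means of the equivariant embedding $(\K(\Hil_G^\infty),\Ad(\lambda^\infty))\hookrightarrow(\Oinf,\gamma)$ of Remark \ref{rem:cptinclusion} composed with the unital equivariant embedding $(\Oinf,\gamma)\to(F_{\infty,\beta}(B),\tilde\beta_\infty)$ granted by isometric shift-absorption (Remark \ref{rem:isa}); a diagonal-sequence argument as in Lemma \ref{lem:seqargument} then descends the construction to a nuclearly liftable, equivariant $\psi\colon(A,\alpha)\to(B_{\infty,\beta},\beta_\infty)$ with $\I(\psi)=\I(\iota_\infty)\circ\Phi$.

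It remains to verify the hypothesis of Remark \ref{rem:thm4.10}, namely that $(\psi,\1)$ is properly unitarily equivalent to $\eta^*\circ(\psi,\1)=(\eta^*\circ\psi,\1)$ for every $\eta\colon\N\to\N$ with $\eta(n)\to\infty$. Here $\eta^*\circ\psi$ is again nuclearly liftable and equivariant and, since reindexing preserves the induced map on ideals, satisfies $\I(\eta^*\circ\psi)=\I(\psi)$. When $\beta$ is strongly stable this is exactly the situation covered by Lemma \ref{lem:4.3}, which delivers the desired proper unitary equivalence and completes the proof in that case.

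The main obstacle is to obtain this last proper unitary equivalence \emph{without} assuming strong stability, since Lemma \ref{lem:4.3} is no longer available for $\beta$ itself. I expect the resolution to exploit that the folded $\psi$ factors through central sequences in $B_{\infty,\beta}$, whose ambient unital copy of $\Oinf$ coming from isometric shift-absorption permits an Eilenberg-swindle-type manipulation absorbing the reindexing $\eta^*$; establishing this reindexing invariance directly, rather than through a full uniqueness theorem, is the technical heart of removing strong stability.
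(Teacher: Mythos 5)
Your handling of the strongly stable case coincides with the paper's: Lemma \ref{lem:trick} produces $\psi$ with $\I(\psi)=\I(\iota_\infty)\circ\Phi$, the identity $\I(\eta^*\circ\psi)=\I(\psi)$ together with Lemma \ref{lem:4.3} gives the proper unitary equivalence $(\psi,\1)\pue(\eta^*\circ\psi,\1)$, and Remark \ref{rem:thm4.10} plus injectivity of $\I(\iota_\infty)$ finish the argument. The genuine gap is your treatment of general $\beta$. Your construction of $\psi\colon A\to B_{\infty,\beta}$ in that setting is in fact feasible (your ``folding'' step is essentially Lemma \ref{lem:seqargument}, which only requires equivariant $\O2$-stability and isometric shift-absorption), but the step you then need --- verifying the hypothesis of Remark \ref{rem:thm4.10}, i.e.\ that $(\psi,\1)$ is properly unitarily equivalent to $(\eta^*\circ\psi,\1)$ --- is left unproved. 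This is not a routine verification: Lemma \ref{lem:4.3} uses strong stability in an essential way, as its proof runs through Theorem \ref{thm:weak-cont} (which requires $B$ stable), Lemma \ref{lem:4.2} and \cite[Lemma 1.21]{GS22b} (which require $\beta$ strongly stable), and Lemma \ref{lem:2.8} (likewise). Your proposed ``Eilenberg-swindle-type manipulation'' absorbing the reindexing is only an expectation, and nothing in the paper's toolkit supplies it; establishing reindexing invariance directly would be comparable in difficulty to reproving the uniqueness machinery without stability. As written, the proof is incomplete precisely where you say the technical heart lies.

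The missing idea is a much simpler reduction performed on the codomain rather than inside the sequence algebra. By Remark \ref{rem:thm5.6}, equivariant $\O2$-stability yields a proper cocycle conjugacy $(\theta,\mathbbm{x})\colon(B,\beta)\to(B\otimes\O2,\beta\otimes\id_{\O2})$; composing $(\theta,\mathbbm{x})^{-1}$ with $\id_B\otimes\iota$, for any embedding $\iota\colon\K\hookrightarrow\O2$, gives a proper cocycle embedding $(B\otimes\K,\beta\otimes\id_{\K})\hookrightarrow(B,\beta)$ which induces $I\otimes\K\mapsto I$ on ideal lattices, because $\iota(\K)$ is full in the simple algebra $\O2$. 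Since $\beta\otimes\id_{\K}$ is strongly stable and inherits equivariant $\O2$-stability and isometric shift-absorption, your strongly stable argument applies to the equivariant $\Cu$-morphism $J\mapsto\Phi(J)\otimes\K$, producing a proper cocycle morphism $(A,\alpha)\to(B\otimes\K,\beta\otimes\id_{\K})$; composing with the embedding above gives $(\f,\bbu)\colon(A,\alpha)\to(B,\beta)$ with $\I(\f)=\Phi$. This one-paragraph reduction is how the paper removes the strong stability assumption, and it makes your attempted direct descent for general $\beta$ (and the unresolved swindle) unnecessary.
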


\begin{proof}
By Remark \ref{rem:thm5.6} there exists a proper cocycle conjugacy
\begin{equation*}
(\theta,\mathbbm{x}) : (B,\beta) \to (B\otimes\O2, \beta\otimes\id_{\O2})
\end{equation*}
that is strongly asymptotically unitarily equivalent to the first factor embedding $\id_{B}\otimes\1_{\O2}$ with trivial cocycle.\
Then, for any embedding $\iota:\K\hookrightarrow\O2$, there exists a proper cocycle embedding $(B\otimes\K,\beta\otimes\id_{\K})\xhookrightarrow{(\theta,\mathbbm{x})^{-1} \circ (\id\otimes\iota)}(B,\beta)$ inducing the map $I\otimes\K \mapsto I$ for all $I\in\I(B)$ on the ideal lattice.\
For this reason, it is sufficient to prove the statement with the additional assumption that $\beta$ is strongly stable.\
In fact, this would allow us to find a proper cocycle morphism $(A,\alpha)\to(B\otimes\K,\beta\otimes\id_{\K})$ inducing $J\mapsto \Phi(J)\otimes\K$ for all $J\in\I(A)$ on the ideal lattice, which proves the general statement when composed with $\zeta \circ (\id\otimes\iota)$.
Therefore, let us assume that $\beta$ is strongly stable

Since the hypotheses of Lemma \ref{lem:trick} are satisfied, there exists an equivariant, nuclearly liftable $\ast$-homomorphism
\begin{equation*}
\psi:(A,\alpha) \to (B_{\infty,\beta},\beta_\infty),
\end{equation*}
that satisfies $\I(\psi)=\I(\iota_{\infty}) \circ \Phi$ as a map with range in $B_\infty$.

To obtain a proper cocycle morphism with range in $B$, we want to appeal to Remark \ref{rem:thm4.10}.\
Fix a sequence $\eta:\N\to\N$ such that $\lim_{n\to\infty}\eta(n)=\infty$.\
One has that that
\begin{align*}
\I(\eta^* \circ \psi)&=\I(\eta^*) \circ \I(\psi)\\
&=\I(\eta^*) \circ \I(\iota_{\infty}) \circ \Phi\\
&=\I(\iota_{\infty}) \circ \Phi\\
&=\I(\psi).
\end{align*}
By Lemma \ref{lem:4.3}, we get that $(\eta^* \circ \psi,\1)$ and $(\psi,\1)$ are properly unitarily equivalent.\
It follows from Remark \ref{rem:thm4.10} that there exists a proper cocycle morphism $(\f,\bbu):(A,\alpha)\to (B,\beta)$ such that $\iota_{\infty} \circ (\f,\bbu)$ is properly unitarily equivalent to $(\psi,\1)$.\
When considered as maps with range $B_\infty$, we have that
\begin{equation*}
\I(\iota_{\infty}) \circ \I(\f)=\I(\iota_{\infty} \circ \f)=\I(\psi)=\I(\iota_{\infty}) \circ \Phi,
\end{equation*}
and since $\I(\iota_{\infty})$ is injective, it follows that $\I(\f)=\Phi$.\
In particular, $\iota_{\infty} \circ \f$ is nuclearly liftable, and thus $\f$ is nuclear.
\end{proof}

\begin{remark}\label{rem:existence}
If in Theorem \ref{thm:existence} one assumes $G$ to be exact and removes the amenability assumption on $\alpha$, then the same conclusion holds true.\
Let us show why in the following paragraph.

Since $G$ is exact, we conclude with \cite[Theorem 6.6]{OS21} that there exists an amenable action $\delta:G\curvearrowright \O2$.\
Then from Theorem \ref{thm:existence} we get a proper cocycle morphism $(\f_0,\bbu_0):(A\otimes \O2,\alpha\otimes\delta) \to (B,\beta)$ such that $\I(\f_0)(I\otimes \O2)=\Phi(I)$ for all $I\in\I(A)$.\
Hence, one may define $(\f,\bbu):(A,\alpha)\to(B,\beta)$ to be the composition of $(\f_0,\bbu_0)$ with $\id_A\otimes\1_{\O2}$.
The proper cocycle morphism $(\f,\bbu)$ clearly satisfies $\I(\f)=\Phi$.
\end{remark}

\section{The classification theorem}

The uniqueness and existence results obtained in the previous sections yield a one-to-one correspondence between equivariant $\Cu$-morphisms and proper cocycle morphisms identified up to strong asymptotic unitary equivalence.

\begin{corollary}\label{cor:bijection}
Let $\alpha:G\curvearrowright A$ be an amenable action on a separable, exact $\Cs$-algebra, and $\beta:G\curvearrowright B$ an amenable, equivariantly $\O2$-stable, isometrically shift-absorbing, strongly stable action on a separable $\Cs$-algebra.\
Then, there exists a canonical one-to-one correspondence
\begin{equation*}
\frac{\begin{Bmatrix}\text{\rm{nuclear proper cocycle morphisms }}\\ (A,\alpha)\to(B,\beta)\end{Bmatrix}}{\text{\rm{strong asymptotic unitary equivalence}}}  \to \begin{Bmatrix}\text{\rm{equivariant $\Cu$-morphisms }}\\ (\I(A),\alpha^{\sharp})\to(\I(B),\beta^{\sharp}) \end{Bmatrix}
\end{equation*}
\end{corollary}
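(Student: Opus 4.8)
The plan is to show that the assignment $(\f,\bbu)\mapsto\I(\f)$ descends to a well-defined map on strong asymptotic unitary equivalence classes, and that this map is the desired bijection. First I would check that this assignment is well defined at the level of classes. Given a nuclear proper cocycle morphism $(\f,\bbu):(A,\alpha)\to(B,\beta)$, the induced map $\I(\f):\I(A)\to\I(B)$ is an equivariant $\Cu$-morphism with respect to $\alpha^{\sharp}$ and $\beta^{\sharp}$, as recorded in the remark following Lemma \ref{lem:contaction}; thus it lands in the correct target set. Moreover, if $(\f,\bbu)$ and $(\psi,\bbv)$ are strongly asymptotically unitarily equivalent, then $\I(\f)=\I(\psi)$: this is the easy implication in Theorem \ref{thm:uniqueness}, which holds because such an equivalence exhibits $\psi(a)$ as a limit of elements of the form $u_t\f(a)u_t^*$, so $\f$ and $\psi$ generate the same ideals. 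Hence the assignment factors through strong asymptotic unitary equivalence classes and produces a genuine map $\Theta$ from the left-hand set to the right-hand set.

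Next I would establish injectivity of $\Theta$. Since $A$ is separable and exact and $\beta$ is amenable, strongly stable, equivariantly $\O2$-stable and isometrically shift-absorbing, the hypotheses of Theorem \ref{thm:uniqueness} are met. Thus, whenever two nuclear proper cocycle morphisms $(\f,\bbu)$ and $(\psi,\bbv)$ satisfy $\I(\f)=\I(\psi)$, the theorem yields that they are strongly asymptotically unitarily equivalent, i.e.\ they represent the same class. This is precisely injectivity of $\Theta$.

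For surjectivity I would invoke Theorem \ref{thm:existence}. As $\alpha$ is amenable on a separable exact algebra and $\beta$ is equivariantly $\O2$-stable and isometrically shift-absorbing, the theorem applies and, for any equivariant $\Cu$-morphism $\Phi:(\I(A),\alpha^{\sharp})\to(\I(B),\beta^{\sharp})$, produces a proper cocycle morphism $(\f,\bbu):(A,\alpha)\to(B,\beta)$ with $\I(\f)=\Phi$. The one point requiring attention is that this representative should land in the class of \emph{nuclear} morphisms, so that it defines an element of the left-hand set: in the proof of Theorem \ref{thm:existence}, the morphism $\f$ is constructed so that $\iota_\infty\circ\f$ is properly unitarily equivalent to the nuclear $\ast$-homomorphism $\psi$ of Lemma \ref{lem:trick}, whence $\iota_\infty\circ\f$, and therefore $\f$ itself, is nuclear. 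Thus the class of $(\f,\bbu)$ is a preimage of $\Phi$ under $\Theta$, proving surjectivity.

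The bulk of the work is already carried by Theorems \ref{thm:uniqueness} and \ref{thm:existence}, so the only real obstacle here is bookkeeping: verifying that the hypotheses of both theorems are simultaneously satisfied by the pair $(\alpha,\beta)$ in the statement, and confirming that the morphism produced by the existence theorem is nuclear so that the correspondence is genuinely between the two displayed sets. Once these checks are in place, combining well-definedness, injectivity and surjectivity yields the claimed canonical one-to-one correspondence.
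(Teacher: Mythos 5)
Your proof is correct and takes essentially the same route as the paper: Theorem \ref{thm:uniqueness} supplies injectivity and Theorem \ref{thm:existence} supplies surjectivity of the assignment $(\f,\bbu)\mapsto\I(\f)$. In fact you are somewhat more careful than the paper's own (very terse) proof, which leaves implicit both the well-definedness of the map on equivalence classes and the fact that the morphism produced by the existence theorem is nuclear --- a point that is genuinely needed, both so that the preimage lies in the left-hand set and so that Theorem \ref{thm:uniqueness} (which assumes nuclearity) can be applied to it.
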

\begin{proof}
Let $\Phi:(\I(A),\alpha^{\sharp})\to(\I(B),\beta^{\sharp})$ be an equivariant $\Cu$-morphism.\
By Theorem \ref{thm:existence}, there exists a proper cocycle morphism $(\f,\bbu):(A,\alpha)\to(B,\beta)$ such that $\I(\f)=\Phi$.\
Suppose that there exists another proper cocycle morphism $(\psi,\bbv):(A,\alpha)\to(B,\beta)$ such that $\I(\psi)=\Phi$.\
Then by Theorem \ref{thm:uniqueness} we have that $(\f,\bbu)$ and $(\psi,\bbv)$ are strongly asymptotically unitarily equivalent.\
This finishes the proof.
\end{proof}

\begin{corollary}\label{cor:bijection2}
Assume $G$ to be exact.\
Let $\alpha:G\curvearrowright A$ be an action on a separable, exact, unital $\Cs$-algebra, and $\beta:G\curvearrowright B$ an amenable, equivariantly $\O2$-stable, isometrically shift-absorbing action on a separable, unital $\Cs$-algebra.\
Then, there exists a canonical one-to-one correspondence
\begin{equation*}
\frac{\begin{Bmatrix}\text{\rm{unital nuclear proper cocycle morphisms }}\\ (A,\alpha)\to(B,\beta)\end{Bmatrix}}{\text{\rm{strong asymptotic unitary equivalence}}} \to \begin{Bmatrix}\text{\rm{equivariant $\Cu$-morphisms }}\\ \Phi:(\I(A),\alpha^{\sharp})\to(\I(B),\beta^{\sharp}) \\ \text{\rm{such that }} \Phi(A)=B \end{Bmatrix}
\end{equation*}
\end{corollary}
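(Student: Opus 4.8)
The plan is to set up the correspondence by sending the strong asymptotic unitary equivalence class of a unital nuclear proper cocycle morphism $(\f,\bbu):(A,\alpha)\to(B,\beta)$ to the equivariant $\Cu$-morphism $\I(\f)$, and then to check that this assignment is well defined, injective, and surjective onto the indicated set. The heavy lifting for the first two properties is already done by the unital uniqueness theorem, while surjectivity will require upgrading the (a priori non-unital) output of the existence theorem to a unital morphism.

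Well-definedness and injectivity follow almost directly from Corollary \ref{cor:uniqueness}. First, if $\f$ is unital then $\I(\f)(A)=\overline{B\f(A)B}=\overline{B\1_B B}=B$, so $\I(\f)$ indeed lands in the prescribed target, and since strongly asymptotically unitarily equivalent morphisms induce the same map on ideals, the assignment descends to equivalence classes. For injectivity, suppose $(\f,\bbu)$ and $(\psi,\bbv)$ are unital nuclear proper cocycle morphisms with $\I(\f)=\I(\psi)$. As $G$ is exact and all the standing hypotheses of Corollary \ref{cor:uniqueness} are met, that corollary yields that $(\f,\bbu)$ and $(\psi,\bbv)$ are strongly asymptotically unitarily equivalent, so the classes coincide.

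For surjectivity, fix an equivariant $\Cu$-morphism $\Phi$ with $\Phi(A)=B$. Since $G$ is exact, Remark \ref{rem:existence} (the non-amenable variant of the existence theorem) provides a nuclear proper cocycle morphism $(\f_0,\bbu_0):(A,\alpha)\to(B,\beta)$ with $\I(\f_0)=\Phi$. Writing $p=\f_0(\1_A)$, the hypothesis $\Phi(A)=B$ reads $\overline{BpB}=\I(\f_0)(A)=B$, i.e.\ $p$ is a full projection. The key step is that, because $B$ is unital and $\O2$-stable, it is strongly purely infinite with vanishing $K_0$-group, so the full projections $p$ and $\1_B$ are Murray--von Neumann equivalent (cf.\ \cite{KR00,Cun81}). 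Choosing $w\in B$ with $w^*w=\1_B$ and $ww^*=p$, I set
\[
\f:=w^*\f_0(-)w,\qquad \bbu_g:=w^*\bbu_{0,g}\beta_g(w)\quad(g\in G).
\]
Using $p\f_0(a)p=\f_0(a)$ and the twisted invariance $\bbu_{0,g}\beta_g(p)\bbu_{0,g}^*=p$ (obtained by applying the equivariance of $(\f_0,\bbu_0)$ to $\1_A$), a direct computation shows that $\f$ is a unital $\ast$-homomorphism, that $\bbu$ is a $\beta$-cocycle with values in $\U(\1+B)$, and that $\f\circ\alpha_g=\Ad(\bbu_g)\circ\beta_g\circ\f$. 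Norm-continuity of $\bbu$ is inherited from that of $\bbu_0$ together with the point-norm continuity of $\beta$ on the fixed element $w$, and $\f$ is nuclear as the composite of the nuclear map $\f_0$ with the compression $b\mapsto w^*bw$. Finally $\I(\f)=\I(\f_0)=\Phi$, since $w^*\f_0(I)w\subseteq\overline{B\f_0(I)B}$ and $\f_0(I)=ww^*\f_0(I)ww^*\subseteq\overline{B(w^*\f_0(I)w)B}$ for every $I\in\I(A)$, so that $(\f,\bbu)$ is the desired preimage.

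The main obstacle is precisely this projection-correction step. The robustness of the construction above lies in the fact that the non-equivariance of $w$ is harmlessly absorbed into the redefined cocycle $\bbu_g=w^*\bbu_{0,g}\beta_g(w)$, so that no equivariant choice of $w$ is required; the only genuine input is the Murray--von Neumann equivalence $p\sim\1_B$, which is exactly where the $\O2$-stability of $B$ is used in an essential way. Everything else reduces to bookkeeping and to invoking Corollary \ref{cor:uniqueness} and Remark \ref{rem:existence}.
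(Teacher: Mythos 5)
Your proposal is correct and follows essentially the same route as the paper's proof: invoke Remark \ref{rem:existence} (exactness of $G$ replacing amenability of $\alpha$) to produce a proper cocycle morphism inducing $\Phi$, note that the image of $\1_A$ is a full, properly infinite projection in the $\O2$-stable unital algebra $B$ and hence Murray--von Neumann equivalent to $\1_B$, compress by the resulting isometry while twisting the cocycle by $\bbu_g = w^*\bbu_{0,g}\beta_g(w)$, and then appeal to Corollary \ref{cor:uniqueness} for injectivity. The paper's argument is exactly this, down to the same cocycle computation using $\bbu_{0,h}\beta_h(p)=p\,\bbu_{0,h}$; your write-up merely makes the well-definedness check and the verification $\I(\f)=\I(\f_0)$ explicit.
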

\begin{proof}
From Theorem \ref{thm:existence} combined with Remark \ref{rem:existence} there exists a proper cocycle morphism $(\psi,\bbv):(A,\alpha)\to(B,\beta)$ inducing $\Phi$.\
Since $\Phi(A)=B$ and $B\cong B\otimes\O2$, we have that $\psi(1_A)$ is a properly infinite, full projection in $B$.\
Therefore, there exists an isometry $v\in B$ such that $vv^*=\psi(1_A)$ (cf.\ \cite[Proposition 4.1.4]{Ror02} and \cite[Theorems 1.4+1.9+2.3]{Cun81}).\
We then define a unital $\ast$-homomorphism
\begin{equation*}
\f:A\to B, \quad \f(a)=v^*\psi(a)v,
\end{equation*}
and a norm-continuous map
\begin{equation*}
\bbu:G\to\U(\1+B),\quad \bbu_g=v^*\bbv_g\beta_g(v).
\end{equation*}
Let us briefly show that $\bbu:G\to\U(\1+B)$ is a $\beta$-cocycle,
\begin{align*}
\bbu_{gh}=v^*\bbv_{gh}\beta_{gh}(v)&=v^*\bbv_g\beta_g(\bbv_h \beta_h(v)) \\
&=v^*\bbv_g\beta_g(vv^*\bbv_h\beta_h(v))=\bbu_g\beta_g(\bbu_h)
\end{align*}
for all $g,h\in G$, where we used that $\bbv_h\beta_h(vv^*)=vv^*\bbv_h$, which follows from $\Ad(\bbv_h) \circ \beta_h \circ \psi(1_A)=\psi(1_A)$.
It follows that $(\f,\bbu):(A,\alpha)\to(B,\beta)$ is a unital proper cocycle morphism inducing $\Phi$.\
By Corollary \ref{cor:uniqueness} we have that $(\f,\bbu)$ is the unique unital proper cocycle morphism inducing $\Phi$ up to strong asymptotic unitary equivalence.
\end{proof}

\begin{definition}[{see \cite[Definition 5.9]{GS22b}+\cite[Definition 2.4(iii)]{Sza17}}]
Let $\beta:G\curvearrowright B$ be an action on a \Cs-algebra.\
A norm-continuous $\beta$-cocycle $\bbu:G\to\U(\1+B)$ is said to be an \textit{asymptotic coboundary} if there exists a norm-continuous path of unitaries $v:[0,\infty)\to\U(\1+B)$ such that
\begin{equation*}
  \lim_{t\to\infty}\max_{g\in K} \|\bbu_g - v_t\beta_g(v_t)^*\|=0
\end{equation*}
for every compact set $K\subseteq G$.

If $\alpha:G\curvearrowright A$ is an action on a \Cs-algebra, then a proper cocycle conjugacy $(\f,\bbu):(A,\alpha)\to(B,\beta)$ is  a \textit{very strong cocycle conjugacy} if $\bbu$ is an asymptotic coboundary.
\end{definition}

\begin{remark}\label{rem:coboundary}
Let $\beta:G\curvearrowright B$ be an amenable, isometrically shift-absorbing, equivariantly $\O2$-stable action on a separable \Cs-algebra, and $\bbu:G\to\U(\1+B)$ a norm-continuous $\beta$-cocycle.\
Moreover, consider the following assumptions:
\begin{enumerate}[label=\textup{(\roman*)},leftmargin=*]
\item $\beta$ is strongly stable; \label{assumption1}
\item $B$ is unital, and $G$ exact. \label{assumption2}
\end{enumerate}
Assume \ref{assumption1} for the rest of this paragraph.\
Note that, if $D$ denotes the zero algebra, every proper cocycle morphism $(D,\id_D)\to(B,\beta)$ induces the same $\Cu$-morphism between ideal lattices.\
Hence, Theorem \ref{thm:uniqueness} implies that that $(0,\bbu)$ is strongly asymptotically unitarily equivalent to $(0,\1)$, where $0$ denotes the zero map.\
In other words, there exists a norm-continuous path of unitaries $v:[0,\infty)\to\U(\1+B)$ such that $v_0=\1$, and $\max_{g\in K}\|\bbu_g-v_t\beta_g(v_t)^*\|\xrightarrow{t\to\infty}0$ for all compact sets $K\subseteq G$.

Assume now \ref{assumption2}.\
Observe that any unital proper cocycle morphism $(\mathbb{C},\id_{\mathbb{C}})\to(B,\beta)$ induces the same $\Cu$-morphism between ideal lattices.\
Thus, by Corollary \ref{cor:uniqueness}, one obtains that $(\iota,\bbu)$ is strongly asymptotically unitarily equivalent to $(\iota,\1)$, where $\iota:\mathbb{C}\hookrightarrow B$ is the canonical unital map.\

Consequently, if $\alpha:G\curvearrowright A$ is an action on a \Cs-algebra, and $(\f,\bbu):(A,\alpha)\to(B,\beta)$ a proper cocycle conjugacy, the previous paragraph implies that whenever \ref{assumption1} or \ref{assumption2} holds true, $\bbu$ is an asymptotic coboundary and thus $(\f,\bbu)$ is a very strong cocycle conjugacy.
\end{remark}

We record here a dynamical version of Elliott's intertwining that will be used to prove our classification theorem.

\begin{theorem}[{see \cite[Corollary 4.6]{Sza21}}]\label{thm:Elliott}
Let $\alpha:G\curvearrowright A$ and $\beta:G\curvearrowright B$ be actions on separable \Cs-algebras.\
Let
\begin{equation*}
(\f,\bbu):(A,\alpha)\to(B,\beta), \quad (\psi,\bbv):(B,\beta)\to(A,\alpha)
\end{equation*}
be proper cocycle morphisms such that $(\psi,\bbv) \circ (\f,\bbu)$ and $(\f,\bbu) \circ (\psi,\bbv)$ are strongly asymptotically unitarily equivalent to $\id_A$ and $\id_B$, respectively.\
Then, $(\f,\bbu)$ is strongly asymptotically equivalent to a proper cocycle conjugacy.
\end{theorem}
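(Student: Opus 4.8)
The plan is to run an Elliott-type two-sided approximate intertwining in the category of proper cocycle morphisms, using strong asymptotic unitary equivalence as the ambient notion of equivalence. First I would extract the data witnessing the hypotheses. Recalling the composition formula from Remark \ref{rem:composition}, one has $(\psi,\bbv)\circ(\f,\bbu)=(\psi\circ\f,\psi^\dagger(\bbu)\bbv)$ and $(\f,\bbu)\circ(\psi,\bbv)=(\f\circ\psi,\f^\dagger(\bbv)\bbu)$, so the hypotheses supply norm-continuous paths $w:[0,\infty)\to\U(\1+A)$ and $z:[0,\infty)\to\U(\1+B)$, both starting at $\1$, such that
\[
a=\lim_{t\to\infty} w_t\,(\psi\circ\f)(a)\,w_t^*
\quad\text{and}\quad
\lim_{t\to\infty}\max_{g\in K}\big\|\1-w_t\,(\psi^\dagger(\bbu)\bbv)_g\,\alpha_g(w_t)^*\big\|=0
\]
for all $a\in A$ and compact $K\subseteq G$, together with the symmetric statement for $z$ relative to $(\f\circ\psi,\f^\dagger(\bbv)\bbu)$ and $\id_B$.

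The core of the argument is the intertwining construction. I would fix exhausting increasing sequences of finite sets $\mathcal F_n\subseteq A$, $\mathcal G_n\subseteq B$ and compact sets $K_n\subseteq G$, and then inductively choose a strictly increasing sequence of times $t_1<t_2<\cdots\to\infty$ and correction unitaries in $\U(\1+A)$ and $\U(\1+B)$ (read off from $w$ and $z$ at these times) such that, upon conjugating $\f$ and $\psi$ by the accumulated products of these corrections, the two rows of the zig-zag
\[
A\xrightarrow{\f}B\xrightarrow{\psi}A\xrightarrow{\f}B\to\cdots
\]
commute up to $2^{-n}$ on the relevant finite sets, both at the level of the $\ast$-homomorphism data and, uniformly over $K_n$, at the level of the transported cocycles. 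The telescoping products of corrections then yield two point-norm convergent families whose limits are $\ast$-homomorphisms $\Phi:A\to B$ and $\Psi:B\to A$. Injectivity of $\Phi$ and the relation $\Psi\circ\Phi=\id_A$ are forced by $(\psi,\bbv)\circ(\f,\bbu)\sasu\id_A$, while surjectivity of $\Phi$ and $\Phi\circ\Psi=\id_B$ come from $(\f,\bbu)\circ(\psi,\bbv)\sasu\id_B$: concretely, for $b\in B$ one has $\Ad(z_t^*)(b)\approx\f(\psi(b))\in\f(A)$ for large $t$, which pushes $b$ into the closure of the image of the corrected $\f$.

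I would then handle the cocycle bookkeeping. The same correction unitaries transport $\bbu$ to a norm-continuous $\beta$-cocycle $\mathbbm{w}$, with the cocycle identity and continuity surviving the uniform-on-compacta limit, so that $(\Phi,\mathbbm{w})$ is a proper cocycle conjugacy whose inverse is assembled from $\Psi$. Finally, since $\Phi$ is by construction a limit of unitary conjugates of $\f$, concatenating the correction unitaries into a single norm-continuous path based at $\1$ produces exactly a witness that $(\f,\bbu)\sasu(\Phi,\mathbbm{w})$, which is the desired conclusion.

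The main obstacle I anticipate is running the intertwining in the continuous (asymptotic) regime rather than the sequential (approximate-unitary) one, while simultaneously keeping the cocycles controlled uniformly on compact subsets of $G$. Interleaving the paths $w$ and $z$ and reparametrizing them so that the concatenated correction path is genuinely norm-continuous, the telescoping products converge, and the twisting of cocycles by $\psi^\dagger$ and $\f^\dagger$ under composition does not spoil the uniform estimates is the delicate part; this is precisely where one must ensure the maximal-over-$K_n$ cocycle errors remain summable along the chosen times, so that $\mathbbm{w}$ and the witnessing path are well defined.
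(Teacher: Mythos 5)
The paper gives no proof of Theorem \ref{thm:Elliott} at all: the result is imported wholesale from \cite[Corollary 4.6]{Sza21}, which is exactly why the statement carries that citation. Your proposal should therefore be measured against the proof of the cited result, and on that score it follows the same route: a two-sided Elliott intertwining run in the category of proper cocycle morphisms, with strong asymptotic unitary equivalence as the ambient relation. Your extraction of the hypothesis data is correct (the cocycle of $\id_A$ is the trivial cocycle $\1$, so your displayed conditions are the right ones), the adaptive zig-zag with corrections pushed through $\f^\dagger$ and $\psi^\dagger$ is the right mechanism, the mutual inverseness of the limits $\Phi$ and $\Psi$ is the standard telescoping argument, and the transported cocycles $g\mapsto U\bbu_g\beta_g(U)^*$ do converge uniformly on compacta to a norm-continuous $\beta$-cocycle under the same estimates. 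What you have not done --- and what you correctly flag as the crux --- is the continuous-regime bookkeeping: connecting consecutive accumulated corrections by reparametrized segments of $w$ and $z$ so that the concatenated path is norm-continuous, starts at $\1$, and keeps both the point-norm errors and the $\max_{g\in K_n}$ cocycle errors controlled on entire intervals $[t_n,t_{n+1}]$ rather than only at the discrete times; this is possible precisely because the hypothesis estimates hold on whole tails $[t,\infty)$, but carrying it out is the actual technical heart of \cite[Section 4]{Sza21}. So: right route, correct skeleton, and a fair identification of where the difficulty lies --- but as written it is a plan for that work rather than the work itself, and within this paper the intended proof is simply the citation.
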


The following theorem represents the main application of the previous sections, and should be considered as a generalization of Gabe and Kirchberg's $\O2$-stable classification (i.e., \cite{Kir00} and \cite[Theorem 6.13]{Gab20}) to the framework of $\Cs$-dynamical systems.

\begin{theorem}[Classification] \label{thm:classification}
Let $\alpha:G\curvearrowright A$ and $\beta:G\curvearrowright B$ be amenable, equivariantly $\O2$-stable, isometrically shift-absorbing actions on separable, nuclear $\Cs$-algebras.\
Then, the following statements hold true.
\begin{enumerate}[label=\textup{(\roman*)},leftmargin=*]
\item If both $A$ and $B$ are stable, then for every conjugacy $f:(\Prim(A),\alpha^{\sharp})\to(\Prim(B),\beta^{\sharp})$ there exists a cocycle conjugacy $(\f,\bbu):(A,\alpha)\to(B,\beta)$ such that $\f(\p)=f(\p)$ for all $\p\in\Prim(A)$. \label{class1}
\item If both $\alpha$ and $\beta$ are strongly stable, then for every conjugacy $f:(\Prim(A),\alpha^{\sharp})\to(\Prim(B),\beta^{\sharp})$ there exists a very strong cocycle conjugacy $(\f,\bbu):(A,\alpha)\to(B,\beta)$ such that $\f(\p)=f(\p)$ for all $\p\in\Prim(A)$.\label{class2}
\item If $G$ is exact, and both $A$ and $B$ are unital, then for every conjugacy $f:(\Prim(A),\alpha^{\sharp})\to(\Prim(B),\beta^{\sharp})$ there exists a very strong cocycle conjugacy $(\f,\bbu):(A,\alpha)\to(B,\beta)$ such that $\f(\p)=f(\p)$ for all $\p\in\Prim(A)$.\label{class3}
\end{enumerate}
\end{theorem}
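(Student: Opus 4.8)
The plan is to convert the given conjugacy $f$ into an equivariant isomorphism of ideal lattices, realise it and its inverse by nuclear proper cocycle morphisms in both directions, and then run a two-sided intertwining argument to upgrade these to a genuine cocycle conjugacy. By Remark \ref{rem:invariants} the conjugacy $f:(\Prim(A),\alpha^{\sharp})\to(\Prim(B),\beta^{\sharp})$ induces a unique $\alpha^{\sharp}$-to-$\beta^{\sharp}$ equivariant order isomorphism $\Phi:\I(A)\to\I(B)$ with $\Phi|_{\Prim(A)}=f$, and by Remark \ref{rem:Cu(A)} this $\Phi$ is automatically an equivariant $\Cu$-isomorphism; its inverse $\Phi^{-1}$ is the equivariant $\Cu$-isomorphism induced by $f^{-1}$. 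Throughout, note that $A$ and $B$ are exact since they are nuclear, that nuclearity makes $\id_A$ and $\id_B$ nuclear, and that strongly asymptotically unitarily equivalent proper cocycle morphisms induce the same $\Cu$-morphism, so that any cocycle conjugacy $(\f,\bbu)$ with $\I(\f)=\Phi$ automatically satisfies $\f(\p)=f(\p)$ for all $\p\in\Prim(A)$.

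For part \ref{class2}, both $\alpha$ and $\beta$ are strongly stable, so Corollary \ref{cor:bijection} applies symmetrically. I would first use it to produce a nuclear proper cocycle morphism $(\f,\bbu):(A,\alpha)\to(B,\beta)$ with $\I(\f)=\Phi$ and a nuclear proper cocycle morphism $(\psi,\bbv):(B,\beta)\to(A,\alpha)$ with $\I(\psi)=\Phi^{-1}$. Functoriality of $\I(-)$ then gives $\I((\psi,\bbv)\circ(\f,\bbu))=\Phi^{-1}\circ\Phi=\I(\id_A)$ and likewise $\I((\f,\bbu)\circ(\psi,\bbv))=\I(\id_B)$, so by the uniqueness half of Corollary \ref{cor:bijection} (that is, Theorem \ref{thm:uniqueness}) the two composites are strongly asymptotically unitarily equivalent to $(\id_A,\1)$ and $(\id_B,\1)$, respectively. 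At this point Theorem \ref{thm:Elliott} shows that $(\f,\bbu)$ is strongly asymptotically equivalent to a proper cocycle conjugacy $(\f',\bbu'):(A,\alpha)\to(B,\beta)$; since $\I(\f')=\I(\f)=\Phi$, it realises $f$ on primitive ideals. Finally, because $\beta$ is strongly stable, Remark \ref{rem:coboundary} (case \ref{assumption1}) forces $\bbu'$ to be an asymptotic coboundary, so $(\f',\bbu')$ is a very strong cocycle conjugacy, completing \ref{class2}.

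Part \ref{class1} I would deduce from \ref{class2} by stabilisation. Since $A$ and $B$ are stable, Remark \ref{rem:strstable} provides cocycle conjugacies $(A,\alpha)\cc(A\otimes\K,\alpha\otimes\id_{\K})$ and $(B,\beta)\cc(B\otimes\K,\beta\otimes\id_{\K})$, and the stabilised actions are strongly stable while inheriting amenability, equivariant $\O2$-stability and isometric shift-absorption. Using Remark \ref{rem:prodprim} together with the fact that $\Prim(\K)$ is a single point, the induced systems on primitive ideal spaces are unchanged up to canonical conjugacy, so I would transport $f$ along these conjugacies, apply part \ref{class2} to the stabilised systems, and conjugate back to obtain a cocycle conjugacy $(\f,\bbu):(A,\alpha)\to(B,\beta)$ realising $f$. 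For part \ref{class3}, with $G$ exact and $A,B$ unital, I would run the identical intertwining argument but replacing Corollary \ref{cor:bijection} by the unital correspondence of Corollary \ref{cor:bijection2}: as an order isomorphism, $\Phi$ sends the top element $A$ to $B$ (and $\Phi^{-1}$ sends $B$ to $A$), so the condition $\Phi(A)=B$ is met and one obtains unital nuclear $(\f,\bbu)$ and $(\psi,\bbv)$; Corollary \ref{cor:uniqueness} serves as the uniqueness input, Theorem \ref{thm:Elliott} again upgrades $(\f,\bbu)$ to a proper cocycle conjugacy realising $f$, and Remark \ref{rem:coboundary} (case \ref{assumption2}) yields that it is very strong.

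The genuinely hard mathematics is already contained in the existence and uniqueness results of the previous sections, so the remaining work is largely bookkeeping: verifying that amenability, equivariant $\O2$-stability and isometric shift-absorption pass to the stabilisations $\alpha\otimes\id_{\K}$ and $\beta\otimes\id_{\K}$, confirming that the composites $(\psi,\bbv)\circ(\f,\bbu)$ and $(\f,\bbu)\circ(\psi,\bbv)$ genuinely induce the identity $\Cu$-morphisms (and are nuclear, using nuclearity of $A$ and $B$) so that Theorem \ref{thm:Elliott} applies, and carefully tracking $f$ through the reductions so that the final cocycle conjugacy realises precisely $f$ on $\Prim(A)$ rather than merely some conjugacy. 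The subtlest point I anticipate is the transport of $f$ in part \ref{class1}: I must ensure the canonical identifications $\Prim(A\otimes\K)\cong\Prim(A)$ are equivariant and compatible with the chosen cocycle conjugacies, so that no discrepancy is introduced when conjugating back.
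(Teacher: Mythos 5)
Your proposal is correct and follows essentially the same route as the paper's own proof: reduce \ref{class1} to \ref{class2} by stabilising via Remark \ref{rem:strstable} and transporting $f$ along the chosen cocycle conjugacies, prove \ref{class2} by converting $f$ to an equivariant order isomorphism (Remark \ref{rem:invariants}), realising $\Phi$ and $\Phi^{-1}$ through Corollary \ref{cor:bijection}, applying Theorem \ref{thm:Elliott}, and invoking Remark \ref{rem:coboundary}, and prove \ref{class3} analogously with Corollary \ref{cor:bijection2}. The compatibility issue you flag in part \ref{class1} is resolved exactly as you suggest, by defining the transported conjugacy directly through the maps that $(\theta_\alpha,\mathbbm{x}_\alpha)$ and $(\theta_\beta,\mathbbm{x}_\beta)$ induce on primitive ideal spaces, so no discrepancy can arise.
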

\begin{proof}
We first observe that \ref{class1} follows from \ref{class2}.
From Remark \ref{rem:strstable} we know that when both $A$ and $B$ are stable, there exist cocycle conjugacies
\begin{align*}
(\theta_\alpha,\mathbbm{x}_\alpha):(A,\alpha) \to (A\otimes\K,\alpha\otimes\id_{\K}),\quad
(\theta_\beta,\mathbbm{x}_\beta):(B,\beta) \to (B\otimes\K,\beta\otimes\id_{\K}).
\end{align*}
Hence, for every conjugacy $f:(\Prim(A),\alpha^{\sharp})\to(\Prim(B),\beta^{\sharp})$, we may apply (ii) to the conjugacy given by
\begin{align*}
 (\Prim(A\otimes\K),\alpha\otimes\id_{\K}) \to (\Prim(B\otimes\K),\beta\otimes\id_{\K}),\quad
 \p\otimes\K \mapsto (\theta_\beta \circ f \circ \theta_\alpha^{-1}(\p))\otimes\K
\end{align*}
and get a very strong cocycle conjugacy $(\f_0,\bbu_0):(A\otimes\K,\alpha\otimes\id_{\K})\to(B\otimes\K,\beta\otimes\id_{\K})$ such that $\f_0(\p\otimes\K)=(\theta_\beta \circ f \circ \theta_\alpha^{-1}(\p))\otimes\K$ for all $\p\in\Prim(A)$.\
Then, the cocycle conjugacy given as
\begin{equation*}
(\f,\bbu)=(\theta_\beta,\mathbbm{x}_\beta)^{-1} \circ (\f_0,\bbu_0) \circ (\theta_\alpha,\mathbbm{x}_\alpha):(A,\alpha)\to(B,\beta)
\end{equation*}
satisfies $\f(\p)=f(\p)$ for all $\p\in\Prim(A)$.

We now prove the non-trivial implication in \ref{class2}.\
Assume that both $\alpha$ and $\beta$ are strongly stable, and that $f:(\Prim(A),\alpha^{\sharp})\to(\Prim(B),\beta^{\sharp})$ is a conjugacy.\
By Remark \ref{rem:invariants} there exists an equivariant order isomorphism $\Phi:(\I(A),\alpha^{\sharp})\to(\I(B),\beta^{\sharp})$ such that $\Phi(\p)=f(\p)$ for all $\p\in\Prim(A)$.\
From Corollary \ref{cor:bijection} there exist proper cocycle morphisms $(\f_0,\bbu_0):(A,\alpha)\to(B,\beta)$ and $(\psi_0,\bbv_0):(B,\beta)\to(A,\alpha)$ such that $\I(\f_0)=\Phi$ and $\I(\psi_0)=\Phi^{-1}$.\
Since we have that
\begin{equation*}
\I(\psi_0)\circ\I(\f_0)=\I(\id_A), \quad \I(\f_0)\circ\I(\psi_0)=\I(\id_B),
\end{equation*}
Corollary \ref{cor:bijection} guarantees that $(\psi_0,\bbv_0)\circ(\f_0,\bbu_0)$ is strongly asymptotically unitarily equivalent to $\id_A$, and $(\f_0,\bbu_0)\circ(\psi_0,\bbv_0)$ is strongly asymptotically unitarily equivalent to $\id_B$.\
It is therefore possible to apply Theorem \ref{thm:Elliott} to get a proper cocycle conjugacy $(\f,\bbu):(A,\alpha)\to(B,\beta)$ such that $(\f,\bbu)$ is strongly asymptotically unitarily equivalent to $(\f_0,\bbu_0)$, and therefore $\I(\f)=\Phi$.\
By Remark \ref{rem:coboundary}, $(\f,\bbu)$ is a very strong cocycle conjugacy with the property that $\f(\p)=\Phi(\p)=f(\p)$ for all $\p\in\Prim(A)$.

The non-trivial implication in \ref{class3} can be proved analogously as \ref{class2} by virtue of Corollary \ref{cor:bijection2}, which has to be used in place of Corollary \ref{cor:bijection}.
\end{proof}

\begin{remark}[{see \cite[Proposition 6.3]{GS22b}}]\label{rem:compact}
When $G$ is a second-countable, compact group, the following is true.\
If $\beta:G\curvearrowright B$ is an isometrically shift-absorbing action on a separable, stable \Cs-algebra, then it is strongly stable.
\end{remark}

In light of the above remark, we obtain a refined version of Theorem \ref{thm:classification} in case $G$ is compact.

\begin{corollary} \label{cor:compact-classification}
Let $G$ be a second-countable, compact group.\
Let $\alpha:G\curvearrowright A$ and $\beta:G\curvearrowright B$ be isometrically shift-absorbing and equivariantly $\O2$-stable actions on separable, nuclear \Cs-algebras.\
Then, the following statements hold true.
\begin{enumerate}[label=\textup{(\roman*)},leftmargin=*]
\item If both $A$ and $B$ are stable, then for every conjugacy $f:(\Prim(A),\alpha^{\sharp})\to(\Prim(B),\beta^{\sharp})$ there exists a conjugacy $\f:(A,\alpha)\to(B,\beta)$ such that $\f(\p)=f(\p)$ for all $\p\in\Prim(A)$.
\item If both $A$ and $B$ are unital, then for every conjugacy $f:(\Prim(A),\alpha^{\sharp})\to(\Prim(B),\beta^{\sharp})$ there exists a conjugacy $\f:(A,\alpha)\to(B,\beta)$ such that $\f(\p)=f(\p)$ for all $\p\in\Prim(A)$.
\end{enumerate}
\end{corollary}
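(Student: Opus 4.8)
The plan is to deduce both parts from Theorem \ref{thm:classification}, using two features specific to compact groups. First, a compact group is both amenable and exact; in particular every action of $G$ is amenable, so the missing amenability hypothesis of Theorem \ref{thm:classification} holds automatically here. Second, for compact $G$ an asymptotic coboundary is automatically a genuine coboundary, which is exactly what converts a very strong cocycle conjugacy into an honest conjugacy.

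For the reductions: in case (i) both $A$ and $B$ are stable, so by Remark \ref{rem:compact} the actions $\alpha$ and $\beta$ are strongly stable, and Theorem \ref{thm:classification}\ref{class2} provides a very strong cocycle conjugacy $(\f,\bbu):(A,\alpha)\to(B,\beta)$ with $\f(\p)=f(\p)$ for all $\p\in\Prim(A)$. In case (ii) the group $G$ is exact (being compact) and $A,B$ are unital, so Theorem \ref{thm:classification}\ref{class3} yields a very strong cocycle conjugacy with the same property on primitive ideals. In either case it remains to upgrade $(\f,\bbu)$ to a genuine conjugacy inducing $f$.

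The main step is to show that the asymptotic coboundary $\bbu$ is in fact a coboundary. By definition there is a norm-continuous path $v:[0,\infty)\to\U(\1+B)$ with $\max_{g\in G}\|\bbu_g-v_t\beta_g(v_t)^*\|\to 0$ as $t\to\infty$, where we use the compact set $K=G$. Hence the conjugated $\beta$-cocycles $\bbu^{(t)}_g:=v_t^*\bbu_g\beta_g(v_t)$ satisfy $\sup_{g\in G}\|\bbu^{(t)}_g-\1\|\to 0$. Fix $t$ large enough that this supremum is less than $1$ and set $z=\int_G\bbu^{(t)}_g\,d\mu(g)\in\1+B$, where $\mu$ is the normalized Haar measure; then $z$ is invertible. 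Left-invariance of $\mu$ together with the cocycle identity gives $\bbu^{(t)}_h\beta_h(z)=z$ for every $h\in G$, from which $z^*z=\beta_h(z^*z)$ is $\beta$-invariant. Writing the polar decomposition $z=w|z|$ with $w\in\U(\1+B)$, the $\beta$-invariance of $|z|$ forces $\bbu^{(t)}_h=w\beta_h(w)^*$, and therefore $\bbu_g=W\beta_g(W)^*$ with $W:=v_tw\in\U(\1+B)$.

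Finally, with $\bbu_g=W\beta_g(W)^*$ a direct computation shows that $\f':=\Ad(W^*)\circ\f$ satisfies $\f'\circ\alpha_g=\beta_g\circ\f'$ for all $g\in G$, so $\f'$ is a genuine $G$-equivariant $\ast$-isomorphism $(A,\alpha)\to(B,\beta)$. Since $\Ad(W^*)$ is inner with $W\in\U(\1+B)$, it fixes every ideal of $B$ setwise, whence $\f'(\p)=\f(\p)=f(\p)$ for all $\p\in\Prim(A)$, as required. I expect the delicate point to be the averaging argument straightening the asymptotic coboundary into an exact one: it uses compactness of $G$ in an essential way (the convergence is uniform over all of $G$ and the Haar integral produces a single $\beta$-invariant positive part), and this is precisely the content of the results of \cite{GS22b} on asymptotic coboundaries alluded to in the introduction.
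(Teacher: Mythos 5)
Your proposal is correct, and its overall skeleton is exactly the paper's: reduce via Remark \ref{rem:compact} (stable $\Rightarrow$ strongly stable for compact $G$) and the exactness/amenability of compact groups so that Theorem \ref{thm:classification}\ref{class2} resp.\ \ref{class3} produces a very strong cocycle conjugacy lifting $f$, and then upgrade this to a genuine conjugacy. The only difference is in that last step: the paper simply cites \cite[Corollary 5.11]{GS22b}, which says that for compact $G$ a very strong cocycle conjugacy is properly unitarily equivalent to a conjugacy, whereas you reprove this inline. Your averaging argument is sound: taking $K=G$ gives $\sup_{g\in G}\|\bbu^{(t)}_g-\1\|\to 0$ for the conjugated cocycles $\bbu^{(t)}_g=v_t^*\bbu_g\beta_g(v_t)$; for large $t$ the Haar average $z=\int_G \bbu^{(t)}_g\,d\mu(g)\in\1+B$ is invertible, the cocycle identity plus left invariance gives $\bbu^{(t)}_h\beta_h(z)=z$, hence $z^*z$ (and so $|z|$) is $\beta$-invariant, and the polar decomposition $z=w|z|$ yields $\bbu^{(t)}_h=w\beta_h(w)^*$, so $\bbu_g=W\beta_g(W)^*$ with $W=v_tw\in\U(\1+B)$; finally $\Ad(W^*)\circ\f$ is equivariant and, being a perturbation by an inner automorphism, still induces $f$ on $\Prim$. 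What the paper's citation buys is brevity; what your version buys is a self-contained proof of the compact-group coboundary phenomenon, which is in substance the same mechanism underlying the cited result in \cite{GS22b}.
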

\begin{proof}
Observe that, by Remark \ref{rem:compact}, if $A$ and $B$ are stable, $\alpha$ and $\beta$ are strongly stable.\
Now, in both cases (i) and (ii), a conjugacy $f:(\Prim(A),\alpha^{\sharp})\to(\Prim(B),\beta^{\sharp})$ lifts to a very strong cocycle conjugacy $(\f,\bbu):(A,\alpha)\to(B,\beta)$ by Theorem \ref{thm:classification}.\
By \cite[Corollary 5.11]{GS22b}, $(\f,\bbu)$ is properly unitarily equivalent to a conjugacy, which concludes the proof.
\end{proof}

\begin{remark}\label{rem:modelaction}
When $G$ is exact, it follows from Theorem \ref{thm:classification} (or from the main result of \cite{GS22b}) that there exists a unique amenable, isometrically shift-absorbing and equivariantly $\O2$-stable action $\delta:G\curvearrowright\O2$ up to very strong cocycle conjugacy.\
An example of such an action is given by
\begin{equation*}
\alpha\otimes\id_{\O2}\otimes\gamma^{\otimes\infty}:G\curvearrowright\O2\otimes\O2\otimes(\Oinf)^{\otimes\infty}\cong\O2,
\end{equation*}
where $\alpha:G\curvearrowright\O2$ is an amenable action on $\O2$, which exists by \cite[Theorem 6.6]{OS21}.\
Note that $\delta$ is automatically strongly self-absorbing.
\end{remark}

\begin{theorem} \label{thm:exact-char}
Suppose $G$ is exact.
Let $\beta:G\curvearrowright B$ be an action on a separable, nuclear \Cs-algebra.\
Let $\delta:G\curvearrowright\O2$ be an amenable, isometrically shift-absorbing, equivariantly $\O2$-stable action.\
Then, $\beta$ is amenable, isometrically shift-absorbing and equivariantly $\O2$-stable if and only if $\beta$ is cocycle conjugate to $\beta\otimes\delta$.
\end{theorem}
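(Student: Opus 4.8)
The plan is to treat the two implications separately, with the forward direction carrying essentially all the weight.

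For the reverse implication, suppose $\beta\cc\beta\otimes\delta$. Here I would only invoke permanence properties of the three conditions in play. Each of amenability, isometric shift-absorption and equivariant $\O2$-stability is invariant under cocycle conjugacy, and each is inherited by a tensor product as soon as one of the two factors enjoys it. For amenability this is a standard permanence property (Buss--Echterhoff--Willett, Ozawa--Suzuki); for isometric shift-absorption one composes the equivariant isometry-valued map $\fs:\Hil_G\to F_{\infty,\delta}(\O2)$ with a canonical equivariant unital embedding $F_{\infty,\delta}(\O2)\to F_{\infty,\beta\otimes\delta}(B\otimes\O2)$, in the spirit of \cite{GS22b}; and for equivariant $\O2$-stability it is the formal chain $\beta\otimes\delta\cc\beta\otimes(\delta\otimes\id_{\O2})=(\beta\otimes\delta)\otimes\id_{\O2}$, using $\delta\cc\delta\otimes\id_{\O2}$. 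Since $\delta$ enjoys all three properties, so does $\beta\otimes\delta$, and since $\beta\cc\beta\otimes\delta$ they transfer back to $\beta$.

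For the forward implication, assume $\beta$ is amenable, isometrically shift-absorbing and equivariantly $\O2$-stable. By the same permanence properties, $\beta\otimes\delta:G\curvearrowright B\otimes\O2$ is again amenable, isometrically shift-absorbing and equivariantly $\O2$-stable, and $B\otimes\O2$ is separable and nuclear. The crucial observation is that the two systems have conjugate primitive ideal dynamics: since $\O2$ is simple, $\Prim(\O2)$ is a single point, so Remark \ref{rem:prodprim} furnishes a homeomorphism $f:\Prim(B)\to\Prim(B\otimes\O2)$, $\p\mapsto\p\otimes\O2$. As $\delta^\sharp$ is necessarily trivial on the one-point space $\Prim(\O2)$, the identity $(\beta\otimes\delta)^\sharp_g(\p\otimes\O2)=\beta^\sharp_g(\p)\otimes\O2$ shows that $f$ is in fact a conjugacy $(\Prim(B),\beta^\sharp)\to(\Prim(B\otimes\O2),(\beta\otimes\delta)^\sharp)$. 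Feeding this conjugacy into the classification theorem (Theorem \ref{thm:classification}) should then produce a cocycle conjugacy $(B,\beta)\to(B\otimes\O2,\beta\otimes\delta)$, i.e.\ $\beta\cc\beta\otimes\delta$.

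The main obstacle is that Theorem \ref{thm:classification} is stated under a stability or unitality hypothesis, whereas here $B$ is an arbitrary separable nuclear \Cs-algebra. To deal with this I would first tensor by $\K$: the actions $\beta\otimes\id_\K$ and $(\beta\otimes\delta)\otimes\id_\K$ are strongly stable (Remark \ref{rem:strstable}), still satisfy all hypotheses, and have conjugate primitive ideal systems by the same recipe, so Theorem \ref{thm:classification}(ii) yields $\beta\otimes\id_\K\cc(\beta\otimes\delta)\otimes\id_\K$. The genuinely delicate step is then to descend this stabilized cocycle conjugacy to one between $\beta$ and $\beta\otimes\delta$. Here I would exploit that equivariant $\O2$-stability forces $B$ (and $B\otimes\O2$) to be $\O2$-absorbing, so that $B$ and $B\otimes\O2$ embed into their stabilizations as full hereditary subalgebras whose Cuntz classes both equal the top element of $\I(B)\cong\I(B\otimes\O2)$; the rigidity of full hereditary subalgebras of $\O2$-stable algebras can then be used to arrange the stabilized cocycle conjugacy to respect these corners. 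Alternatively, one may sidestep stabilization altogether by using that $\delta$ is strongly self-absorbing (Remark \ref{rem:modelaction}) and reducing $\beta\cc\beta\otimes\delta$ to the construction of a unital equivariant $\ast$-homomorphism $(\O2,\delta)\to(F_{\infty,\beta}(B),\tilde{\beta}_\infty)$, assembled from the trivial-$\O2$ embedding coming from equivariant $\O2$-stability, the shift embedding $(\Oinf,\gamma)\to F_{\infty,\beta}(B)$ coming from isometric shift-absorption (Remark \ref{rem:isa}), and an amenable piece coming from amenability of $\beta$.
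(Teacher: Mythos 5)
Your overall skeleton matches the paper's: the reverse direction is the same permanence argument, and the core of the forward direction --- feeding the equivariant identification of $\Prim(B)$ with $\Prim(B\otimes\O2)$ (equivalently, the equivariant order isomorphism $\I(\id_B\otimes\1_{\O2})$) into Theorem \ref{thm:classification} --- is exactly what the paper does. The difference, and the gap, lies in how you handle the fact that $B$ is neither stable nor unital. You stabilize, obtain $\beta\otimes\id_\K\cc(\beta\otimes\delta)\otimes\id_\K$ from Theorem \ref{thm:classification}, and then propose to descend this to $\beta\cc\beta\otimes\delta$ via a rigidity argument for full hereditary subalgebras of $\O2$-stable algebras. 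As written this is not a proof: what you need is an equivariant statement (the stabilized cocycle conjugacy must be arranged to respect the corners compatibly with the cocycles), and even non-equivariantly the passage from stable isomorphism to isomorphism for non-simple $\O2$-stable algebras is itself a consequence of the classification theorem rather than of any elementary corner rigidity. So the step you yourself label ``genuinely delicate'' is a genuine theorem, and your sketch does not supply it.

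The missing ingredient is precisely that absorption of $\delta$ is invariant under stable cocycle conjugacy because $\delta$ is strongly self-absorbing: this is \cite[Theorem 4.30]{BarlakSzabo16}, which the paper invokes. With it, your descent becomes a one-line deduction: since $(\beta\otimes\delta)\otimes\id_\K=(\beta\otimes\id_\K)\otimes\delta$, the stabilized conclusion says that $\beta\otimes\id_\K$ absorbs $\delta$, hence so does $\beta$. The paper organizes this more cleanly by making the reduction up front: amenability and isometric shift-absorption are properties of the induced action on $F_{\infty,\beta}(B)$, hence invariants of stable cocycle conjugacy, and equivariant $\O2$-stability and $\delta$-absorption are such invariants by the cited theorem; therefore one may assume from the outset that $\beta$ is strongly stable and apply Theorem \ref{thm:classification} directly, with no descent needed. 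Your second, ``sidestep stabilization'' alternative (assembling a unital equivariant embedding of $(\O2,\delta)$ into $(F_{\infty,\beta}(B),\tilde{\beta}_\infty)$ out of the three hypotheses) has the same character: it would amount to reproving an equivariant McDuff-type absorption theorem, and arranging the three proposed pieces to commute and to combine equivariantly into a copy of $(\O2,\delta)$ is exactly the hard part, which you do not address.
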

\begin{proof}
Since $\delta$ is strongly self-absorbing, the property of absorbing $\delta$ is invariant under stable cocycle conjugacy; see \cite[Theorem 4.30]{BarlakSzabo16}.
The same is the case for equivariant $\O2$-stability.
Since furthermore amenability and isometric shift-absorption are properties of the induced action on $F_{\infty,\beta}(B)$, which is also an invariant under stable cocycle conjugacy, we may assume without any loss of generality that $\beta$ is strongly stable.

Assume that $\beta$ is cocycle conjugate to $\beta\otimes\delta$.\
Since amenability, isometric shift-absorption and equivariant $\O2$-stability are preserved under cocycle conjugacy and tensor products, it follows that $\beta$ inherits these properties from $\beta\otimes\delta$.

Conversely, assume $\beta$ is amenable, isometrically shift-absorbing and equivariantly $\O2$-stable.\
Consider the equivariant first factor embedding $\id_B\otimes\1_{\O2}:(B,\beta)\to(B\otimes\O2,\beta\otimes\delta)$.\
Since $\I(\id_B\otimes\1_{\O2}):(\I(B),\beta^{\sharp})\to(\I(B\otimes\O2,(\beta\otimes\delta)^{\sharp}))$ is an equivariant order isomorphism, Theorem \ref{thm:classification} implies that there exists a cocycle conjugacy $(\theta,\mathbbm{x}):(B,\beta)\to(B\otimes\O2,\beta\otimes\delta)$.
\end{proof}

\begin{remark}
To conclude, let us discuss a few special cases of our main results that can be retrieved with known techniques.
\begin{enumerate}[leftmargin=*,label={$\bullet$},wide]
\item Suppose $G=\mathbb R$.
It is observed in \cite[Corollary 6.15]{GS22b} that when $G=\mathbb{R}$, an action on a separable $\mathcal{O}_\infty$-stable \Cs-algebra is isometrically shift-absorbing if and only if it has the Rokhlin property \cite{Kishimoto96R}.\
(Note that by the main result of \cite{Szabo19rd}, every Rokhlin flow on an $\O2$-stable \Cs-algebra is furthermore equivariantly $\O2$-stable.)
Then our main result follows from \cite[Theorem B+Remark 5.16]{Sza21b}.
\item When $G=\Z$, the unique outer action $\delta:\Z\curvearrowright\O2$ is well-known to have the Rokhlin property; see \cite[Theorem 1]{Nak00}.\
Since this action is strongly self-absorbing, the main result of \cite{Szabo19rd} implies that every $\Z$-action with the Rokhlin property on any $\O2$-stable \Cs-algebra absorbs $\delta$.
In light of Theorem \ref{thm:exact-char}, our main theorem \ref{thm:classification} is about automorphisms with the Rokhlin property on nuclear $\O2$-stable \Cs-algebras.\
This could have been proved directly using the Evans--Kishimoto intertwining method \cite{EK97} together with the Gabe--Kirchberg $\O2$-stable classification theorem \cite{Kir00,Kir,Gab20}.\footnote{This has not been explicitly carried out in the literature, but a sketch of proof can be found in the second author's lecture notes for a mini-course delivered at the NCGOA 2018, available at \url{https://gaborszabo.nfshost.com/Slides/2018-05-NCGOA-lectures-v2.pdf}.}
\item Suppose $G$ is compact.\
Then there exists an action $G\curvearrowright\O2$ with the Rokhlin property (an example being \cite[Example 2.9]{Gar19}).
Since the model action $\delta:G\curvearrowright\O2$ from Remark \ref{rem:modelaction} necessarily absorbs every $G$-action on $\O2$, it has the Rokhlin property as well.\
On the other hand, if $\beta:G\curvearrowright B$ is an action with the Rokhlin property on an $\O2$-stable \Cs-algebra, then $\beta$ is cocycle conjugate to $\beta\otimes\delta$ by \cite[Theorem 4.50]{GL18}.
Let $\alpha: G\curvearrowright A$ and $\beta: G\curvearrowright B$ be actions with the Rokhlin property on separable, nuclear, $\O2$-stable and stable (or unital) \Cs-algebras.
If $\alpha^{\sharp}:G\curvearrowright\Prim(A)$, and $\beta^{\sharp}:G\curvearrowright\Prim(B)$ are conjugate, one can use Gabe--Kirchberg's $\O2$-stable classification theorem \cite{Kir00,Kir,Gab20} to obtain some isomorphism $\f: A\to B$ (lifting such a conjugacy) such that for the action $\beta'_g:=\f\circ\alpha_g\circ\f^{-1}$, the $*$-homomorphism $\beta'_{\mathrm{co}}$ is approximately unitarily equivalent to $\beta_{\mathrm{co}}$.
Here $\beta_{\mathrm{co}}: B\to\C(G,B)$ is the $*$-homomorphism induced by the orbit maps of $\beta$.
It follows that $\beta'$ is conjugate to $\beta$ via an approximately inner automorphism of $B$ (see \cite[Theorem 3.5]{Izu04a}, \cite[Theorem 3.5]{Naw16} and \cite[Theorem 3.4]{GS16} for finite group actions, and \cite[Theorem 5.10]{BSV17} for all compact groups).\
This in turn implies that there is a conjugacy between $\alpha$ and $\beta$ that lifts any given conjugacy between $\alpha^\sharp$ and $\beta^\sharp$, recovering Corollary \ref{cor:compact-classification}.
\end{enumerate}
\end{remark}


\textbf{Acknowledgements.}
The first named author was supported by PhD-grant 1131623N funded by the Research Foundation Flanders (FWO).
The second named author was supported by the research project C14/19/088 funded by the research council of KU Leuven.
Both authors were supported by the project G085020N funded by the Research Foundation Flanders (FWO).

The first author would like to thank Joan Bosa, James Gabe, Francesc Perera, Hannes Thiel, and Stuart White for helpful discussions on the topic of this article during the research workshop \textit{C*-algebras:\ Structure and Dynamics}, held in Sde Boker in 2022.\
He is especially grateful to James Gabe for giving useful insight about classification of non-simple, purely infinite \Cs-algebras during a visit of the first author to the University of Southern Denmark.
Some of the work on this article has been carried out during the conference \textit{C*-Algebras:\ Tensor Products, Approximation \& Classification} in honour of Eberhard Kirchberg, and both authors would like to extend their gratitude to the Mathematics department at the University of Münster for their kind hospitality.

Both authors would like to thank the anonymous referee for their detailed remarks that led to numerous little improvements and corrections in this article.


\end{document}